\documentclass[11pt,a4paper]{article}
\usepackage{latexsym}
\usepackage{amsmath}
\usepackage{amssymb}
\usepackage{amscd}
\usepackage{amsthm}
\usepackage[all]{xy}

\newtheorem{thm}{Theorem}[section]
\newtheorem{cor}[thm]{Corollary}
\newtheorem{lem}[thm]{Lemma}
\newtheorem{prop}[thm]{Proposition}

\theoremstyle{definition}
\newtheorem{defn}[thm]{Definition}
\newtheorem{rem}[thm]{Remark}
\newtheorem{exa}[thm]{Example}

\newcommand{\Q}{\mathbb Q}
\newcommand{\R}{\mathbb R}
\newcommand{\Z}{\mathbb Z}

\newcommand{\C}{\mathbb C}

\newif\ifpdf \pdftrue
\ifx\pdfoutput\undefined \pdffalse \fi \ifx\pdfoutput\relax
\pdffalse \fi

\usepackage{makeidx}
\pagestyle{myheadings} \textwidth=16cm \textheight=22cm
\headheight=0pt \headsep=0pt \topmargin=0pt \oddsidemargin=0pt
\parskip=3pt
\topskip=2cm \markright{S.Tang}
\makeindex

\begin{document}

\title{Localization theorem for higher arithmetic K-theory}

\author{Shun Tang}

\date{}

\maketitle

\vspace{-10mm}

\hspace{5cm}\hrulefill\hspace{5.5cm} \vspace{5mm}

\textbf{Abstract.} Quillen's localization theorem is well known as a fundamental theorem in the study of algebraic K-theory. In this paper, we present its arithmetic analogue for the equivariant K-theory of arithmetic schemes, which are endowed with an action of certain diagonalisable group scheme. This equivariant arithmetic K-theory is defined by means of a natural extension of Burgos-Wang's simplicial description of Beilinson's regulator map to the equivariant case. As a byproduct of this work, we give an analytic refinement of the Riemann-Roch theorem for higher equivariant algebraic K-theory. And as an application, we prove a higher arithmetic concentration theorem which generalizes Thomason's corresponding result in purely algebraic case to the context of Arakelov geometry.

\textbf{2010 Mathematics Subject Classification:} 14G40, 14L30, 19E08, 19E20

\tableofcontents

\section{Introduction}
\label{intro}
Higher algebraic K-theory has been constructed for any exact category $\mathcal{E}$. By means of Quillen's $Q$-construction, the $m$-th algebraic K-group of $\mathcal{E}$ is defined as the $(m+1)$-th homotopy group of the classifying space of $Q\mathcal{E}$, i.e. $$K_m(\mathcal{E}):=\pi_{m+1}(BQ\mathcal{E}).$$

Specifying the case of algebraic K-theory of schemes, let $X$ be a scheme (which means a noetherian, separated scheme of finite type over an affine noetherian scheme), we denote by $\mathcal{M}(X)$ (resp. $\mathcal{P}(X)$) the exact category of coherent sheaves (resp. locally free sheaves of finite rank) on $X$. Then the algebraic $G$-groups (resp. $K$-groups) of $X$ are defined to be $$G_m(X):=K_m(\mathcal{M}(X))\quad\big(\mathrm{resp. }K_m(X):=K_m(\mathcal{P}(X))\big).$$

Such groups $G_m(X)$ and $K_m(X)$ contain abundant information about $X$. For instance, the motivic cohomology deduced from the Adams operations on $K_m(X)$ can be viewed as some universal cohomology theory and it provides a method to define the intersection product of algebraic cycles when $X$ is regular.

To study the properties of algebraic K-theory of schemes, Quillen's localization theorem is useful and elementary. Combined with the theorem \textit{D\'{e}vissage} (also due to Quillen), it usually appears as the following form:

\begin{thm}\label{tha}
Let $X$ be a scheme, $Y\subset X$ a closed subscheme and $U=X\setminus Y$ its complement. Then there is a long exact sequence of $G$-groups
$$\cdots\to G_m(Y)\to G_m(X)\to G_m(U)\to G_{m-1}(Y)\to\cdots.$$
\end{thm}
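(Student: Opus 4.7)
The plan is to deduce the sequence from two foundational results of Quillen: his localization theorem for abelian categories and the d\'evissage theorem. The intermediate object that bridges the two is the Serre subcategory $\mathcal{M}_Y(X)\subset\mathcal{M}(X)$ consisting of coherent sheaves on $X$ whose support is set-theoretically contained in $Y$.

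First I would show that the quotient abelian category $\mathcal{M}(X)/\mathcal{M}_Y(X)$ is equivalent to $\mathcal{M}(U)$. The restriction functor $j^{*}:\mathcal{M}(X)\to\mathcal{M}(U)$ associated to the open immersion $j:U\hookrightarrow X$ is exact, annihilates precisely the sheaves supported on $Y$, and therefore factors through the quotient. To see that the induced functor is an equivalence of abelian categories, I would invoke the standard coherent extension lemma, which guarantees that every coherent sheaf on $U$ is the restriction of a coherent sheaf on $X$, together with the observation that any two such extensions differ by a subquotient supported on $Y$.

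Next, applying Quillen's localization theorem for abelian categories to the Serre pair $\mathcal{M}_Y(X)\subset\mathcal{M}(X)$ would produce a long exact sequence
$$\cdots\to K_m(\mathcal{M}_Y(X))\to G_m(X)\to G_m(U)\to K_{m-1}(\mathcal{M}_Y(X))\to\cdots.$$
It remains to identify the outer terms with $G_m(Y)$. For this I would apply Quillen's d\'evissage theorem to the fully faithful exact embedding $i_{*}:\mathcal{M}(Y)\to\mathcal{M}_Y(X)$, where $i:Y\hookrightarrow X$ denotes the closed immersion. The key input is that every coherent sheaf $\mathcal{F}$ on $X$ supported on $Y$ is annihilated by some power of the ideal sheaf $\mathcal{I}_Y$, hence admits a finite filtration $\mathcal{F}\supset\mathcal{I}_Y\mathcal{F}\supset\mathcal{I}_Y^{2}\mathcal{F}\supset\cdots$ whose successive quotients lie in the essential image of $i_{*}$.

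The genuine obstacle, which I would cite rather than reprove, is the abelian localization theorem itself: establishing that $BQ\mathcal{M}_Y(X)\to BQ\mathcal{M}(X)\to BQ\mathcal{M}(U)$ is a homotopy fibration requires Quillen's Theorem B applied to a carefully chosen functor between $Q$-constructions, and this is the true homotopical heart of the argument. Once this input is granted, everything else reduces to formal bookkeeping with Serre subcategories and ideal-sheaf filtrations.
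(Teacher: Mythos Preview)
Your proposal is correct and follows exactly the classical route the paper alludes to: the paper does not give its own proof of this statement but simply cites it as Quillen's localization theorem combined with d\'evissage, which is precisely the decomposition you outline.
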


If $X$ and $Y$ are both regular, the algebraic $G$-groups and $K$-groups are isomorphic, so we have a long exact sequence
$$\cdots\to K_m(Y)\to K_m(X)\to K_m(U)\to K_{m-1}(Y)\to\cdots$$
which has more uses.

Now, denote by $\mu_n$ the diagonalisable group scheme associated to an acyclic group of order $n$, and let $X$ be a scheme endowed with an action of $\mu_n$, we may consider the exact category of all coherent or all locally free coherent sheaves on $X$ which admit compatible $\mu_n$-structures. Then Quillen's $Q$-construction leads to a definition of equivariant algebraic K-theory $G_*(X,\mu_n)$ and $K_*(X,\mu_n)$. This equivariant K-theory has been systematically studied by R. Thomason in \cite{Th1}, it has most of the same properties with the algebraic K-theory of schemes in the non-equivariant case. In particular, we have the localization theorem:

\begin{thm}\label{thb}
Let $X$ be a regular scheme endowed with a $\mu_n$-action, $Y\subset X$ a regular equivariant closed subscheme and $U=X\setminus Y$ its complement. Then there is a long exact sequence of equivariant $K$-groups
$$\cdots\to K_m(Y,\mu_n)\to K_m(X,\mu_n)\to K_m(U,\mu_n)\to K_{m-1}(Y,\mu_n)\to\cdots.$$
\end{thm}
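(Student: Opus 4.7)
The plan is to follow the strategy of Quillen's original proof in the non-equivariant case, working throughout with the abelian categories of $\mu_n$-equivariant sheaves and invoking equivariant analogues of Quillen's localization, dévissage, and resolution theorems. The first step is to prove the long exact sequence for equivariant $G$-theory, and then pass to $K$-theory using the regularity hypothesis.

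First I would consider the Serre subcategory $\mathcal{M}_Y(X,\mu_n) \subset \mathcal{M}(X,\mu_n)$ consisting of $\mu_n$-equivariant coherent sheaves on $X$ whose (necessarily $\mu_n$-stable) support is contained in $Y$. Applying Quillen's localization theorem for $Q$-constructions on abelian categories yields a homotopy fibration
$$BQ\mathcal{M}_Y(X,\mu_n) \longrightarrow BQ\mathcal{M}(X,\mu_n) \longrightarrow BQ\bigl(\mathcal{M}(X,\mu_n)/\mathcal{M}_Y(X,\mu_n)\bigr).$$
Next, the $\mathcal{I}_Y$-adic filtration of any object of $\mathcal{M}_Y(X,\mu_n)$ is canonical, hence automatically $\mu_n$-stable, and each graded piece $\mathcal{I}_Y^k/\mathcal{I}_Y^{k+1}$ is a $\mu_n$-equivariant $\mathcal{O}_Y$-module; thus Quillen's dévissage argument extends verbatim and identifies $K_*(\mathcal{M}_Y(X,\mu_n))$ with $G_*(Y,\mu_n)$. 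Finally, the open restriction functor $j^{*}\colon\mathcal{M}(X,\mu_n)\to\mathcal{M}(U,\mu_n)$ is exact with kernel $\mathcal{M}_Y(X,\mu_n)$, and every equivariant coherent sheaf on $U$ is the restriction of an equivariant coherent subsheaf of $j_*\mathcal{F}$ on $X$ (using that $j_*\mathcal{F}$ is a filtered union of coherent equivariant subsheaves, because the action of the finite-type group scheme $\mu_n$ preserves coherent subsheaves); this realises $\mathcal{M}(U,\mu_n)$ as the Serre quotient. Combining these three steps yields the long exact sequence in equivariant $G$-theory.

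To pass from $G$-theory to $K$-theory, one needs to identify $G_*(-,\mu_n)$ with $K_*(-,\mu_n)$ on each of $X$, $Y$, and $U$. This reduces to verifying the equivariant resolution property: every $\mu_n$-equivariant coherent sheaf admits a finite resolution by $\mu_n$-equivariant locally free coherent sheaves. Granted this, Quillen's resolution theorem applied to the inclusion $\mathcal{P}(-,\mu_n)\hookrightarrow\mathcal{M}(-,\mu_n)$ produces the desired isomorphism, and the localization sequence for $G$-theory transfers to one for $K$-theory.

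The main obstacle is precisely this equivariant resolution property. In the non-equivariant case it follows directly from regularity together with the existence of sufficiently many locally free sheaves, but in the equivariant setting one must produce enough $\mu_n$-equivariant locally free sheaves. For regular schemes equipped with an action of the diagonalisable group $\mu_n$ this is supplied by Thomason's construction of equivariant ample families in \cite{Th1} (under mild hypotheses such as quasi-projectivity over an affine base on which $\mu_n$ acts), and the resolution property descends from $X$ to the closed equivariant subscheme $Y$ and, via flat pullback along the open immersion, to $U$. With these ingredients in place, the theorem follows.
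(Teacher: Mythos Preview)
The paper does not actually prove Theorem~\ref{thb}; it is stated in the introduction as a known result and attributed to Thomason \cite{Th1}. Your proposal is a correct outline and is essentially the approach one finds in Thomason's work: apply Quillen's localization theorem to the Serre subcategory $\mathcal{M}_Y(X,\mu_n)\subset\mathcal{M}(X,\mu_n)$, identify the quotient with $\mathcal{M}(U,\mu_n)$, use d\'evissage to identify $K_*(\mathcal{M}_Y(X,\mu_n))$ with $G_*(Y,\mu_n)$, and then invoke the equivariant resolution property on regular schemes to pass from $G$-theory to $K$-theory.

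It is worth noting that the paper does make explicit use of exactly these ingredients later, in Section~4.1, where the homotopy fibre sequence $BQ(\mathcal{M}_Y(X,\mu_n))\to BQ(\mathcal{M}(X,\mu_n))\to BQ(\mathcal{M}(U,\mu_n))$ and the d\'evissage equivalence $BQ(\mathcal{M}(Y,\mu_n))\simeq BQ(\mathcal{M}_Y(X,\mu_n))$ are invoked to construct the homotopy equivalence $i_*\colon|\widehat{S}(Y)|\simeq Fj^*$ needed for the arithmetic purity theorem. So while the paper does not spell out a proof of Theorem~\ref{thb} as such, the components of your argument appear in the paper in service of the arithmetic results, and your sketch is fully compatible with how the paper treats the matter.
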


In this paper, we focus on the same type theorem in the context of Arakelov geometry, we shall prove an arithmetic analogue of Theorem~\ref{thb} and hence of Theorem~\ref{tha} for the equivariant arithmetic K-theory.

Let us explain this more precisely. Suppose that $(D, \Sigma, F_\infty)$ is a regular arithmetic ring in the sense of Gillet-Soul\'{e} (cf. \cite{GS1}) and denote $\mu_n:={\rm Spec}(D[\Z/{n\Z}])$ the diagonalisable group scheme over $D$ associated to an acyclic group $\Z/{n\Z}$. By an equivariant arithmetic scheme $X$ over $D$, we understand a $\mu_n$-quasi-projective scheme over $D$ with smooth generic fibre. It is equivalent to say that $X$ is an arithmetic scheme over $D$ and there exists a very ample invertible $\mu_n$-sheaf on $X$. When we say $X$ is $\mu_n$-projective, that means $X$ is a projective arithmetic scheme equipped with a very ample invertible $\mu_n$-sheaf. The arithmetic $\mathrm{K}_0$-theory of the equivariant and the non-equivariant arithmetic schemes has been studied in several literatures (cf. \cite{GS1}, \cite{GS2}, \cite{KR} and \cite{T1}). In these literatures one common thing was defining the arithmetic $\mathrm{K}_0$-group $\widehat{K}_0(X,\mu_n)$ as a modified Grothendieck group of equivariant hermitian vector bundles on $X$. This group $\widehat{K}_0(X,\mu_n)$ and its character maps to the arithmetic Chow ring evolved many interesting structure statements in Arakelov geometry, such as the intersection theory, the Riemann-Roch theorem and the Lefschetz fixed point formula.

So far, the higher arithmetic K-theory is rarely touched. However, Soul\'{e} (cf. \cite{So}), and also Deligne (cf. \cite{De}), suggested that the higher arithmetic K-groups of $X$ can be defined as the homotopy groups of the homotopy fibre of Beilinson's regulator map. That means we shall obtain a long exact sequence
$$\xymatrix{\cdots \ar[r] & \widehat{K}_m(X) \ar[r] & K_m(X) \ar[r]^-{{\rm ch}} & \bigoplus_{p\geq 0}H_{\mathcal{D}}^{2p-m}(X,\R(p)) \ar[r] & \widehat{K}_{m-1}(X) \ar[r] & \cdots}$$
where $H_{\mathcal{D}}^{*}(X,\R(p))$ is the real Deligne-Beilinson cohomology and ${\rm ch}$ is the Beilinson's regulator map. We caution the reader that for non-proper $X$, ${\rm ch}$ is actually constructed using Beilinson's real absolute Hodge cohomology which is a refinement of Deligne cohomology. In this paper, for the sake of clear expression, we shall use the weak form but all arguments we present are valid for Beilinson's real absolute Hodge cohomology.

To give a more concrete definition of $\widehat{K}_m(X)$ i.e. to get the homotopy fibre, a simplicial description of Beilinson's regulator map is necessary. In \cite{BW}, Burgos and Wang used the theory of higher Bott-Chern forms to carry out this work. They provided a functorial simplicial set $\widehat{S}(X)$, a functorial complex of vector spaces $\bigoplus_{p\geq0}\widetilde{D}^{2p-*}(X,p)[-1]$ and a canonical simplicial map $\widetilde{\rm ch}: \widehat{S}(X)\to \mathcal{K}(\bigoplus_{p\geq0}\widetilde{D}^{2p-*}(X,p)[-1])$ such that $\pi_{m+1}(\mid \widehat{S}(X)\mid)=K_m(X)$, $\pi_{m+1}(\mid\mathcal{K}(\bigoplus_{p\geq0}\widetilde{D}^{2p-*}(X,p)[-1])\mid)=\bigoplus_{p\geq0}H_{\mathcal{D}}^{2p-m}(X, \R(p))$ and $\pi_{m+1}(\mid\widetilde{\rm ch}\mid)={\rm ch}$. So $\widehat{K}_m(X)$ can be defined as $\pi_{m+1}(\text{homotopy fibre of}\mid\widetilde{\rm ch}\mid)$. Here the symbol $\mid\cdot\mid$ stands for the geometric realization of a simplicial set, $\mathcal{K}(\cdot)$ is the Dold-Puppe functor associating a simplicial abelian group to a homological complex of abelian groups. 

To $\mu_n$-equivariant arithmetic K-theory, we may follow a similar approach to get a feasible definition. The kernel is that a $\mu_n$-equivariant vector bundle on the fixed point subscheme $X_{\mu_n}$ with $\mu_n$-invariant hermitian metric orthogonally splits into a direct sum of its eigenbundles, and the definition of higher Bott-Chern form naturally extends to this case (see Definition~\ref{215} below). So we start with a simplicial set $\widehat{S}(X)$ which is associated to the category of $\mu_n$-equivariant vector bundles on $X$ with smooth at infinity metrics (not necessarily $\mu_n$-invariant), and pass to the fixed point subscheme by pull-back map $\varphi: \widehat{S}(X)\to \widehat{S}(X_{\mu_n})$. Then we make the $\mu_n$-average of any metric so that it becomes $\mu_n$-invariant and the theory of equivariant higher Bott-Chern forms can be applied. These manipulations deduce two simplicial maps $\vartheta: \widehat{S}(X_{\mu_n})\to \widehat{S}(X_{\mu_n}, \mu_n)$ and $\phi: \widehat{S}(X_{\mu_n}, \mu_n)\to \mathcal{K}(\bigoplus_{p\geq0}\widetilde{D}^{2p-*}(X_{\mu_n},p)[-1])_{R_n}$, where $\widehat{S}(X_{\mu_n}, \mu_n)$ is the simplicial set associated to the category of $\mu_n$-equivariant vector bundles on $X_{\mu_n}$ with $\mu_n$-invariant and smooth at infinity metrics, $R_n$ is $\R$ or $\C$ which contains the eigenvalues of $\mu_n$-structures. Composing $\varphi$, $\vartheta$ and $\phi$, we get a simplicial map $\widetilde{{\rm ch}_g}: \widehat{S}(X)\to \mathcal{K}(\bigoplus_{p\geq0}\widetilde{D}^{2p-*}(X_{\mu_n},p)[-1])_{R_n}$ which describes the equivariant regulators (see Definition~\ref{221}, ~\ref{222} below). Therefore, we may define $\widehat{K}_{*}(X,\mu_n):=\pi_{*+1}(\text{homotopy fibre of}\mid\widetilde{{\rm ch}_g}\mid)$.
 
The main result of this paper is the following:

\begin{thm}\label{thc}
Let $X$ be a regular $\mu_n$-equivariant arithmetic scheme which is proper over $D$, and let $Y\subset X$ be a regular equivariant arithmetic closed subscheme with $U=X\setminus Y$ its complement. Then there is a long exact sequence of equivariant arithmetic $K$-groups
$$\cdots\to \widehat{K}_m(Y,\mu_n)\to \widehat{K}_m(X,\mu_n)\to \widehat{K}_m(U,\mu_n)\to \widehat{K}_{m-1}(Y,\mu_n)\to\cdots$$
ending with
$$\cdots\to \widehat{K}_1(Y,\mu_n)\to \widehat{K}_1(X,\mu_n)\to \widehat{K}_1(U,\mu_n).$$
\end{thm}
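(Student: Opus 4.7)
The plan is to realise $\widehat{K}_*(-,\mu_n)$ as the homotopy groups of the homotopy fibre of a simplicial equivariant Beilinson regulator, and to extract the long exact sequence from a morphism of three horizontal fibration sequences indexed by $Y$, $X$, and $U$. For an equivariant arithmetic scheme $Z$, one first sets up simplicial models $\mathcal{K}(Z,\mu_n)$ for equivariant K-theory and $\mathcal{D}(Z,\mu_n)$ for higher equivariant Bott-Chern forms on the analytic fixed-point space $(Z_{\mu_n})_{\C}$, together with a natural simplicial regulator $\mathrm{ch}\colon \mathcal{K}(Z,\mu_n)\to\mathcal{D}(Z,\mu_n)$. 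The extension of Burgos-Wang's construction to the equivariant case goes through because an equivariant hermitian sheaf restricts to $Z_{\mu_n}$ as an orthogonal direct sum of its eigenbundles, to each of which the ordinary higher Bott-Chern theory applies. By construction $\widehat{K}_m(Z,\mu_n)$ is the $m$-th homotopy group of the homotopy fibre of $\mathrm{ch}$.

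The main step is to assemble a commuting ladder of horizontal fibration sequences attached to the closed immersion $i\colon Y\hookrightarrow X$ and the open immersion $j\colon U\hookrightarrow X$. The top row $\mathcal{K}(Y,\mu_n)\to\mathcal{K}(X,\mu_n)\to\mathcal{K}(U,\mu_n)$ is the spectrum-level form of Thomason's equivariant localization (Theorem~\ref{thb}), obtained by identifying equivariant K-theory with supports on $Y$ with $\mathcal{K}(Y,\mu_n)$ via equivariant d\'evissage. The bottom row $\mathcal{D}(Y,\mu_n)\to\mathcal{D}(X,\mu_n)\to\mathcal{D}(U,\mu_n)$ is a distinguished triangle coming from the standard localization sequence for real absolute Hodge cohomology attached to the closed immersion $Y_{\mu_n}\hookrightarrow X_{\mu_n}$ of regular complex spaces, together with the purity isomorphism. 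Naturality of the equivariant higher Bott-Chern forms under these maps makes the ladder commute, and the $3\times 3$ lemma for fibration sequences then produces a fibration sequence on homotopy fibres, whose associated long exact sequence is precisely the desired localization sequence of higher arithmetic K-groups. The sequence terminates at $\widehat{K}_1(U,\mu_n)$ because the homotopy-fibre definition of $\widehat{K}_*$ matches the classical Gillet-Soul\'e group $\widehat{K}_0$ only after a separate identification, so the potential boundary map to $\widehat{K}_0(Y,\mu_n)$ lies outside the scope of the fibre-sequence argument.

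The main obstacle will be establishing the commutativity of the ladder at the simplicial level, and in particular the compatibility of the regulator with the pushforward $i_*$. Equivariant higher Bott-Chern forms are built from transgression currents along deformation resolutions of hermitian bundles, and rendering them functorial under an equivariant coherent pushforward requires a careful choice of equivariant locally free resolutions and their metrics on $X$, matched eigenspace by eigenspace with the non-equivariant theory on $X_{\mu_n}$. A secondary technical point is upgrading Thomason's abstract long exact sequence of Theorem~\ref{thb} to an honest homotopy fibration of equivariant K-theory spectra, which proceeds through the equivariant version of K-theory with supports.
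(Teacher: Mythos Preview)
Your overall architecture is the same as the paper's: introduce K-theory with supports as a bridge, use a $3\times 3$-type argument on fibration sequences to get the long exact sequence for $\widehat{K}_{Y,*}(X,\mu_n)$, and then prove an arithmetic purity isomorphism $\widehat{K}_*(Y,\mu_n)\cong\widehat{K}_{Y,*}(X,\mu_n)$. The paper's Lemma~3.5 is exactly your $3\times 3$ step, and its Theorem~4.1 is the purity statement.

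The genuine gap is the sentence ``Naturality of the equivariant higher Bott-Chern forms under these maps makes the ladder commute.'' It does not. The regulator is \emph{not} compatible with $i_*$ on the nose; compatibility holds only after twisting the $Y$-side regulator by ${\rm Td}_g^{-1}(\overline{N}_{X/Y})$, i.e.\ the relevant vertical map on $Y$ is $c={\rm td}_g\circ\widetilde{{\rm ch}_g}$, not $\widetilde{{\rm ch}_g}$ itself. This is a Riemann--Roch phenomenon, and it is the entire content of the arithmetic purity theorem. The paper establishes the homotopy-commutativity of the resulting square in two stages: first for the zero-section embedding $Y\hookrightarrow\mathbb{P}(N\oplus\mathcal{O}_Y)$, where the Koszul resolution gives an explicit exact functor realizing $i_*$ and the required homotopy is built from Bismut's equivariant Bott--Chern singular current $T_g(K(\overline{\mathcal{O}}_Y,\overline{N}))$ (Proposition~4.7); then for a general closed immersion by deformation to the normal cone (Section~4.3). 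Your last paragraph correctly flags compatibility with $i_*$ as the main obstacle, but the phrase ``careful choice of resolutions and metrics'' undersells what is needed: you must name the Todd twist, invoke Bismut's singular current as the explicit chain homotopy, and run the deformation-to-the-normal-cone reduction. Without these ingredients the ladder simply does not close.

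A minor point: since the Todd twist is an isomorphism on the Deligne side, it does not affect the \emph{abstract} homotopy type of the fibre, so the long exact sequence exists regardless; but the maps $i_*$ in that sequence are only well-defined once you have fixed the homotopy, and for later applications (e.g.\ the concentration theorem) you need the specific Todd-corrected form.
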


The difficulty to the formulation and the proof of Theorem~\ref{thc} is that there is no evident exact functor from $\mathcal{P}(Y,\mu_n)$ to $\mathcal{P}(X,\mu_n)$ which connects corresponding K-theory spaces. Our strategy is to construct a middle-bridge, the equivariant arithmetic K-theory with supports $\widehat{K}_{Y,m}(X,\mu_n)$, so that Theorem~\ref{thc} follows from a natural long exact sequence
\begin{align}\label{a1}
\cdots\to \widehat{K}_{Y,m}(X,\mu_n)\to \widehat{K}_m(X,\mu_n)\to \widehat{K}_m(U,\mu_n)\to \widehat{K}_{Y,m-1}(X,\mu_n)\to\cdots
\end{align}
together with an arithmetic purity theorem
$$\widehat{K}_m(Y,\mu_n)\cong \widehat{K}_{Y,m}(X,\mu_n).$$

As a byproduct of the arithmetic purity theorem, we shall obtain an equivariant generalization of Gillet's Riemann-Roch theorem for closed immersions. Moreover, combining with a naive imitation of Roessler's analytic proof of Gillet's Riemann-Roch theorem for compact fibrations (cf. \cite{Roe}), we get an analytic refinement of the Riemann-Roch theorem for higher equivariant K-theory in case of projective morphisms. Such an analytic refinement allows us to extend the Lefschetz trace formula to the higher algebraic K-theory. Another proof of this Lefschetz trace formula is given by K\"{o}ck in \cite{Ko}.

The motivation of our work in this paper is that we want to prove an arithmetic concentration theorem which generalizes Thomason's corresponding result (cf. \cite{Th2}) to the context of Arakelov geometry. This theorem states that, after a suitable localization, the equivariant arithmetic K-group  $\widehat{K}_m(X_{\mu_n},\mu_n)_\rho$ is isomorphic to $\widehat{K}_m(X,\mu_n)_\rho$. Such a statement is considered the foundation of a proof of the higher arithmetic Lefschetz-Riemann-Roch theorem, as the $0$-degree case settled by the author in \cite{T1}. Further details should be discussed in other papers.

Next, it is worth pointing out that a variant formalism of the non-equivariant case $(n=1)$ of Theorem~\ref{thc} has been implied in two papers of A. Holmstrom and J. Scholbach, which concern a development of the Arakelov motivic cohomology (cf. \cite{HS} and \cite{S1}). In these two papers, the six functors formalism in motivic stable homotopy theory was used so that one needs not to construct explicit homotopies between complexes representing the regulator maps whenever a geometric construction (e.g. the pushforward) is to be done. However, to the aims that to get an intrinsic definition of the equivariant regulator maps, to make a complete extension of Theorem~\ref{thb} in Arakelov geometry and to formulate an arithmetic Lefschetz-Riemann-Roch theorem for higher arithmetic K-groups, the Bisumt's analytic machinery that we use in the present paper seems necessary.

Finally, we remark that Takeda provided in \cite{Ta} another way to define the higher arithmetic K-theory for proper arithmetic schemes, which also admits a natural generalization to the equivariant case. Nevertheless, we don't deal with the possible localization theorem for Takeda's arithmetic K-groups in this paper.

The structure of this paper is as follows. In Section~\ref{sec:2}, we construct the equivariant higher arithmetic K-groups following Soul\'{e} and Deligne, as an opportunity, we recall Burgos-Wang's higher Bott-Chern forms and the simplicial description of the regulator maps. In Section~\ref{sec:3}, we introduce the arithmetic K-theory with supports, the Chern character maps with supports and deduce the long exact sequence~(\ref{a1}). In Section~\ref{sec:4}, we state and prove the arithmetic purity theorem, which fulfills the proof of Theorem~\ref{thc}. In the last section, Section~\ref{sec:5}, we give a proof of the arithmetic concentration theorem as an application.

\section{Equivariant higher arithmetic K-groups}
\label{sec:2}
\subsection{Deligne-Beilinson cohomology and Deligne homology}
\label{sec:2.1}
The real Deligne-Beilinson cohomology roughly measures how the natural real structure on the singular cohomology is behaved with respect to the Hodge filtration. It is the target receiving Beilinson's regulator map and plays a main role in Beilinson's formulation of a series of deep conjectures on the special values of $L$-functions of algebraic varieties. In this subsection, we recall the definition and some properties of the real Deligne-Beilinson cohomology for smooth, not necessarily proper algebraic varieties over $\C$. We also recall its homological counterpart, the Deligne homology, and corresponding Poincar\'{e} duality. The main references for this subsection are \cite{Bu1} and \cite{BKK}.

Let $X$ be a smooth algebraic variety over $\C$, in this subsection, we shall work with the analytic topology of $X$. Let $I$ be the category of all smooth compactifications of $X$. The objects in $I$ are pairs $(\widetilde{X}_\alpha,\imath_\alpha)$ where $\widetilde{X}_\alpha$ is a smooth proper variety with immersion $\imath_\alpha: X\hookrightarrow\widetilde{X}_\alpha$ and $D_\alpha:=
\widetilde{X}_\alpha\setminus\imath_\alpha(X)$ is a normal crossing divisor. The morphisms in $I$ are maps $f: \widetilde{X}_\alpha\to \widetilde{X}_\beta$ such that $f\circ\imath_\alpha=\imath_\beta$. It can be shown that the opposed category $I^\circ$ is directed.

\begin{defn}\label{201}
The complex of differential forms on $X$ with logarithmic singularities along infinity is $$E_{\log}^*(X):=\lim\limits_{\stackrel{\longrightarrow}{\alpha\in I^\circ}}E_{\widetilde{X}_\alpha}^*(\log D_\alpha),$$ where $E_{\widetilde{X}_\alpha}^*(\log D_\alpha)$ stands for the complex of differential forms on $\widetilde{X}_\alpha$ with logarithmic singularities along $D_\alpha$ (for details, see \cite{Bu1}).
\end{defn}

Let $E_X^*$ be the complex of smooth differential forms on $X$, then $E_{\log}^*(X)$ is a subcomplex of $E_X^*$. Moreover, $E_{\log}^*(X)$ admits a Hodge filtration $$F^pE_{\log}^n(X)=\bigoplus_{\stackrel{p'\geq p}{p'+q'=n}}E_{\log}^{p',q'}(X)$$ induced by the natural bigrading $E_{\log}^*(X)=\oplus E_{\log}^{p,q}(X)$, and all the morphisms $$(E_{\widetilde{X}_\alpha}^*(\log D_\alpha),F)\to(E_{\log}^*(X),F)$$ are filtered quasi-isomorphisms.

Let us write $$\mathfrak{D}^*(X,p):=s\big(u: E_{\log,\R}^*(X,p)\oplus F^pE_{\log}^*(X)\to E_{\log}^*(X)\big)$$
where $E_{\log,\R}^*(X,p):=(2\pi i)^pE_{\log,\R}^*(X)$ with $E_{\log,\R}^*(X)$ the subcomplex of $E_{\log}^*(X)$ consisting of real forms, $u(a,b)=b-a$, and $s(\cdot)$ is the simple complex associated to a morphism of complexes i.e. $s(\cdot)=\text{cone}(\cdot)[-1]$ which is the mapping cone shifted by $1$. The differential of the complex $\mathfrak{D}^*(X,p)$ will be denoted by $d_\mathfrak{D}$.

The real Deligne-Beilinson cohomology of $X$ is $$H_{\mathcal{D}}^n(X,\R(p))=H^n(\mathfrak{D}^*(X,p)).$$

There is a simpler complex that can be used to compute the real Deligne-Beilinson cohomology, this complex allows us to represent a class in the real Deligne-Beilinson cohomology by a single differential form. We summarize the main results we need as follows and refer to \cite[Section 2]{Bu1} for the general theory of Dolbeault complex and the associated Deligne-Beilinson complex.

Firstly, for an element $x$ in $E_{\log}^n(X)=\bigoplus_{\stackrel{p+q=n}{}}E_{\log}^{p,q}(X)$, we define
$$F^{k,k}x=\sum_{\stackrel{l\geq k,l'\geq k}{}}x^{l,l'}\quad\text{and}\quad F^{k}x=\sum_{\stackrel{l\geq k}{}}x^{l,l'}.$$

On the other hand, the isomorphism $\C=\R(p)\oplus\R(p-1)$ induces a decomposition $E_{\log}^*(X)=E_{\log,\R}^*(X,p)\oplus E_{\log,\R}^*(X,p-1)$, and the projection $\pi_p: E_{\log}^*(X)\to E_{\log,\R}^*(X,p)$ is given by $$\pi_p(x)=\frac{1}{2}\big(x+(-1)^p\overline{x}\big).$$ Then we set $\pi(x):=\pi_{p-1}(F^{n-p+1,n-p+1}x)$.

\begin{thm}\label{202}
Set $$\mathfrak{D}^n(E_{\log}(X),p)=\left\{
      \begin{array}{ll}
        E_{\log,\R}^{n-1}(X,p-1)\bigcap\bigoplus_{\stackrel{p'+q'=n-1}{p'<p,q'<p}}E_{\log}^{p',q'}(X), & n<2p; \\
        E_{\log,\R}^{n}(X,p)\bigcap\bigoplus_{\stackrel{p'+q'=n}{p'\geq p,q'\geq p}}E_{\log}^{p',q'}(X), & n\geq 2p,
      \end{array}
    \right.$$
with differential
$$d_\mathcal{D}x=\left\{
                   \begin{array}{ll}
                     -\pi(dx), & n<2p-1; \\
                     -2\partial\overline{\partial}x, & n=2p-1; \\
                     dx, & n>2p-1.
                   \end{array}
                 \right.$$
Then

(i). the complexes $\mathfrak{D}^*(X,p)$ and $\mathfrak{D}^*(E_{\log}(X),p)$ are homotopically equivalent. The homotopy equivalences $\psi: \mathfrak{D}^*(X,p)\to \mathfrak{D}^*(E_{\log}(X),p)$ and $\phi: \mathfrak{D}^*(E_{\log}(X),p)\to \mathfrak{D}^*(X,p)$ are given by
$$\psi(a,f,\omega)=\left\{
                     \begin{array}{ll}
                       \pi(\omega), & n\leq 2p-1; \\
                       F^{p,p}a+2\pi_p(\partial\omega^{p-1,n-p+1}), & n\geq 2p,
                     \end{array}
                   \right.$$
and
$$\phi(x)=\left\{
            \begin{array}{ll}
              (\partial x^{p-1,n-p}-\overline{\partial}x^{n-p,p-1},2\partial x^{p-1,n-p},x), & n\leq 2p-1; \\
              (x,x,0), & n\geq 2p.
            \end{array}
          \right.$$
Moreover $\psi\phi={\rm Id}$ and $\phi\psi-{\rm Id}=dh+hd$, where $h: \mathfrak{D}^n(X,p)\to \mathfrak{D}^{n-1}(X,p-1)$ is given by
$$h(a,f,\omega)=\left\{
                  \begin{array}{ll}
                    (\pi_p(\overline{F}^p\omega+\overline{F}^{n-p}\omega),-2F^p(\pi_{p-1}\omega),0), & n\leq 2p-1; \\
                    (2\pi_p(\overline{F}^{n-p}\omega),-F^{p,p}\omega-2F^{n-p}(\pi_{p-1}\omega),0), & n\geq 2p.
                  \end{array}
                \right.$$

(ii). The natural morphism $H^*(\mathfrak{D}^*(X,p))\to H^*(E_{\log,\R}^*(X,p))$ is induced by the morphism of complexes
$$r_p: \mathfrak{D}^*(E_{\log}(X),p)\to E_{\log,\R}^*(X,p)$$ given by
$$r_px=\left\{
         \begin{array}{ll}
           2\pi_p(F^pdx)=\partial x^{p-1,n-p}-\overline{\partial}x^{n-p,p-1}, & n\leq 2p-1; \\
           x, & n\geq 2p.
         \end{array}
       \right.$$
\end{thm}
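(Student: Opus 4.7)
The plan is to unwind the mapping-cone description of $\mathfrak{D}^*(X,p)$ and then verify by direct bidegree bookkeeping that $\psi$ and $\phi$ are chain maps, that $\psi\phi=\mathrm{Id}$ on the nose, and that $\phi\psi-\mathrm{Id}=d_{\mathfrak{D}}h+hd_{\mathfrak{D}}$ with the explicitly prescribed $h$. Concretely, I would first write
$$\mathfrak{D}^n(X,p)=E^n_{\log,\R}(X,p)\oplus F^pE^n_{\log}(X)\oplus E^{n-1}_{\log}(X),\qquad d_{\mathfrak{D}}(a,f,\omega)=(da,df,f-a-d\omega),$$
so that the projection $(a,f,\omega)\mapsto a$ realizes the canonical morphism to $E^*_{\log,\R}(X,p)$ used in part (ii).

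Next I would check that $\psi$ and $\phi$ take values in the correct bidegree strata. For $\phi$ in degrees $n\leq 2p-1$, the pair $(\partial x^{p-1,n-p}-\overline{\partial}x^{n-p,p-1},\,2\partial x^{p-1,n-p},x)$ is a real form of weight $p$ in the first slot and an element of $F^pE^n_{\log}(X)$ in the second, because $x^{p-1,n-p}$ has holomorphic degree $p-1$ and $\partial$ raises it to $p$; reality in weight $p$ uses $\overline{\pi_{p-1}\omega}=-\pi_{p-1}\omega$. Similarly $\psi$ lands automatically inside the bigraded subspaces defining $\mathfrak{D}^*(E_{\log}(X),p)$, where $F^{p,p}$ and the projection $\pi$ carry out the necessary truncations. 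Verifying the chain-map property then splits into the three regimes $n<2p-1$, $n=2p-1$ and $n\geq 2p$: in each case one uses $d=\partial+\overline{\partial}$, the relation $\pi_p\circ\overline{\partial}=-\overline{\partial}\circ\pi_p$ (up to signs from the parity shift), and the fact that $\partial\overline{\partial}=-\overline{\partial}\partial$ to match $-\pi d$, $-2\partial\overline{\partial}$ and $d$ respectively.

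The identity $\psi\phi=\mathrm{Id}$ is then a direct substitution. For $n\leq 2p-1$ one obtains $\pi(\omega)=\pi_{p-1}(F^{n-p+1,n-p+1}x)$, and since $x$ already lies in $\bigoplus_{p'<p,\,q'<p}E^{p',q'}_{\log}$ and is real in weight $p-1$, both projections act as the identity. For $n\geq 2p$ one has $\phi(x)=(x,x,0)$ and $F^{p,p}x=x$ by the bidegree constraints on $x$. The homotopy identity $\phi\psi-\mathrm{Id}=d_{\mathfrak{D}}h+hd_{\mathfrak{D}}$ is the core computation: writing out the six components of $\phi\psi(a,f,\omega)$ and the nine terms arising from $dh+hd$, the discrepancy between $\omega$ and the ``corrected'' form coming out of $\phi\psi$ is exactly absorbed by the $\overline{F}^p$ and $\overline{F}^{n-p}$ projections in $h$, again using $d=\partial+\overline{\partial}$ and the reality of $a$. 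Part (ii) is then immediate from the explicit form of $\phi$: composing the projection $(a,f,\omega)\mapsto a$ with $\phi$ yields $x\mapsto x$ in degrees $n\geq 2p$ and $x\mapsto \partial x^{p-1,n-p}-\overline{\partial}x^{n-p,p-1}$ below, and the latter equals $2\pi_p(F^pdx)$ by expanding $dx=\partial x+\overline{\partial}x$ and using $\overline{x}=(-1)^{p-1}x$.

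The main obstacle I anticipate is the arithmetic at the critical degree $n=2p-1$, where the differential of $\mathfrak{D}^*(E_{\log}(X),p)$ jumps from $-\pi d$ to $-2\partial\overline{\partial}$; one must check that this jump is compatible with $\psi$ and $\phi$ without a sign error, and that the homotopy $h$ is continuous across the boundary (i.e.\ the two branches agree when applied to a form living in the overlap). Once these case distinctions are handled consistently with the conventions of Definition~\ref{201} and the sign in the cone differential, both statements reduce to the bigraded computations outlined above.
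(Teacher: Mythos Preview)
The paper does not actually prove this statement: its entire proof is the one-line citation ``This is \cite[Theorem 2.6]{Bu1}.'' So there is nothing to compare your argument against in the present paper itself; what you have sketched is essentially the content of the cited result in Burgos's paper.

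Your strategy---write out the cone differential $d_{\mathfrak{D}}(a,f,\omega)=(da,df,f-a-d\omega)$, check that $\psi,\phi$ land in the right bigraded pieces, verify the chain-map property regime by regime, then check $\psi\phi=\mathrm{Id}$ and $\phi\psi-\mathrm{Id}=d_{\mathfrak{D}}h+hd_{\mathfrak{D}}$ componentwise---is exactly the right one, and is how the result is established in \cite{Bu1}. Your identification of the critical degree $n=2p-1$ as the place requiring care is accurate: that is where the differential on $\mathfrak{D}^*(E_{\log}(X),p)$ changes form and where the two branches of $h$ must patch. One small caution: be careful with the sign convention in the cone (some sources use $-(f-a)-d\omega$ versus $f-a-d\omega$), and make sure your conventions for $\pi_p$ and $F^{k,k}$ match those fixed just before the theorem, since the formulas for $\psi$, $\phi$, $h$ are sensitive to these choices. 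Otherwise your outline is sound and, once the bookkeeping is carried out, constitutes a complete proof rather than a citation.
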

\begin{proof}
This is \cite[Theorem 2.6]{Bu1}.
\end{proof}

\begin{cor}\label{203}
Let $X$ be a smooth algebraic variety over $\C$, then
$$H_{\mathcal{D}}^n(X,\R(p))=H^n(\mathfrak{D}^*(E_{\log}(X),p)).$$
We shall write $D^*(X,p):=\mathfrak{D}^*(E_{\log}(X),p)$ for short.
\end{cor}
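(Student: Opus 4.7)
The corollary is an immediate formal consequence of Theorem~\ref{202}(i) together with the definition of the real Deligne-Beilinson cohomology given just before Theorem~\ref{202}.

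The plan is as follows. Recall that by definition we have $H_{\mathcal{D}}^n(X,\R(p))=H^n(\mathfrak{D}^*(X,p))$. Theorem~\ref{202}(i) provides explicit chain maps $\psi\colon\mathfrak{D}^*(X,p)\to\mathfrak{D}^*(E_{\log}(X),p)$ and $\phi\colon\mathfrak{D}^*(E_{\log}(X),p)\to\mathfrak{D}^*(X,p)$ with $\psi\phi={\rm Id}$ and $\phi\psi-{\rm Id}=dh+hd$ for the specified homotopy $h$. Thus $\psi$ and $\phi$ are mutually inverse homotopy equivalences of complexes. Since homotopy equivalent cochain complexes induce isomorphisms on cohomology in every degree, we obtain
\[
H^n(\mathfrak{D}^*(X,p))\;\cong\;H^n(\mathfrak{D}^*(E_{\log}(X),p))
\]
functorially in $n$. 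Combining this isomorphism with the definition of $H_{\mathcal{D}}^n(X,\R(p))$ yields the identification asserted in the corollary.

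There is essentially no obstacle here: all the analytic work (the construction of the explicit maps $\psi$, $\phi$ and the homotopy $h$, the verification of the homotopy identities, and the compatibility with the Hodge and real structures) has been done in Theorem~\ref{202}, which is quoted from \cite[Theorem 2.6]{Bu1}. The only remaining ingredient is the standard homological fact that chain homotopy equivalences induce isomorphisms on cohomology. Finally, the notation $D^*(X,p):=\mathfrak{D}^*(E_{\log}(X),p)$ is simply recorded for later use, so no argument is required to justify it.
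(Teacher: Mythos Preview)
Your proposal is correct and matches the paper's approach exactly: the corollary is stated without proof in the paper, as an immediate consequence of Theorem~\ref{202}(i), which is precisely what you argue.
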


\begin{rem}\label{204}
(i). The real Deligne-Beilinson cohomology of $X$ at degrees $2p$ and $2p-1$ are given by
$$H^{2p}(\mathfrak{D}^*(E_{\log}(X),p))=\{x\in E_{\log}^{p,p}(X)\cap E_{\log,\R}^{2p}(X,p)\mid dx=0\}/{{\rm Im}(\partial\overline{\partial})}$$
and
$$H^{2p-1}(\mathfrak{D}^*(E_{\log}(X),p))=\{x\in E_{\log}^{p-1,p-1}(X)\cap E_{\log,\R}^{2p-2}(X,p-1)\mid \partial\overline{\partial}x=0\}/{({\rm Im}\partial+{\rm Im}\overline{\partial})}.$$

(ii). Replace $E_{\log}^*(X)$ by $E_X^*$, one gets a complex $\mathfrak{D}^*(E_X,p)$ whose cohomology is called the analytic Deligne cohomology of $X$. When $X$ is proper, it is isomorphic to the real Deligne-Beilinson cohomology.

(iii). Let $x\in \mathfrak{D}^n(E_{\log}(X),p)$ and $y\in \mathfrak{D}^m(E_{\log}(X),q)$, we write $l=n+m$ and $r=p+q$. Then
$$x\bullet y=\left\{
               \begin{array}{ll}
                 (-1)^nr_p(x)\wedge y+x\wedge r_q(y), & n<2p, m<2q; \\
                 \pi(x\wedge y), & n<2p, m\geq 2q, l<2r; \\
                 F^{r,r}(r_p(x)\wedge y)+2\pi_r\partial((x\wedge y)^{r-1,l-r}), & n<2p, m\geq 2q, l\geq 2r; \\
                 x\wedge y, & n\geq 2p, m\geq 2q.
               \end{array}
             \right.$$
induces an associative and commutative product in the real Deligne-Beilinson cohomology which coincides with the product defined by Beilinson. Moreover, if $x\in \mathfrak{D}^{2p}(E_{\log}(X),p)$ is a cocycle, then for all $y,z$ we have $$x\bullet y=y\bullet x\quad\text{and}\quad y\bullet(x\bullet z)=(y\bullet x)\bullet z=x\bullet(y\bullet z).$$

(iv). Let $0\leq\alpha\leq1$ be a real number, the product on the Deligne-Beilinson complex $\mathfrak{D}^*(E_{\log}(X),p)$ corresponds to the product on the Dolbeault complex $\mathfrak{D}^*(X,p)$
$$\cup_\alpha:\quad \mathfrak{D}^n(X,p)\otimes \mathfrak{D}^m(X,q)\rightarrow \mathfrak{D}^{n+m}(X,p+q)$$
given by
\begin{align*}
&(a_p,f_p,\omega_p)\cup_\alpha(a_q,f_q,\omega_q)\\
=&(a_p\wedge a_q,f_p\wedge f_q,\alpha(\omega_p\wedge a_q+(-1)^nf_p\wedge\omega_q)+(1-\alpha)(\omega_p\wedge f_q+(-1)^n(a_p\wedge\omega_q))).
\end{align*}
Actually, the map in (iii) is given by $x\bullet y=\psi(\phi(x)\cup_\alpha\phi(y))$ which is independent of the choice of $\alpha$. This product is graded commutative and it is associative up to a natural homotopy.

(v). The natural homotopy mentioned in (iv) is given as a map
$$h_\alpha:\quad \mathfrak{D}^{n}(E_{\log}(X),p)\otimes \mathfrak{D}^{m}(E_{\log}(X),q)\otimes \mathfrak{D}^{l}(E_{\log}(X),r)\to \mathfrak{D}^{n+m+l-1}(E_{\log}(X),p+q+r)$$
by
$$h_\alpha(a\otimes b\otimes c)=\psi(h(\phi(a)\cup_\alpha\phi(b))\cup_\alpha\phi(c))+(-1)^{n+1}\psi(\phi(a)\cup_\alpha h(\phi(b)\cup_\alpha\phi(c)))$$
so that $(a\bullet b)\bullet c-a\bullet(b\bullet c)=h_\alpha d(a\otimes b\otimes c)+d h_\alpha(a\otimes b\otimes c)$.
\end{rem}

The following result will not be used in the present paper.

\begin{lem}\label{lrr}
Let $a\in \mathfrak{D}^{2p}(E_{\log}(X),p)$, $b\in \mathfrak{D}^{2q-1}(E_{\log}(X),q)$ and $c\in \mathfrak{D}^{k}(E_{\log}(X),k)$ with $k\geq1$, then $h_1(a\otimes b\otimes c)=h_1(b\otimes a\otimes c)=0$.
\end{lem}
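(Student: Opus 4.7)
The plan is to unfold the definition of $h_1$ from Remark~\ref{204}(v) directly and show that each of the four resulting summands vanishes, either because $h$ is applied to a triple whose $\omega$-component is annihilated by bidegree and reality constraints, or because $\psi$ is applied to a triple whose $\omega$-component is already zero in a range where $\psi(\cdot,\cdot,\omega)=\pi(\omega)$.

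The key rigidity is that $a\in\mathfrak{D}^{2p}(E_{\log}(X),p)$ is forced to be of pure bidegree $(p,p)$ with $\overline{a}=(-1)^{p}a$, so $\phi(a)=(a,a,0)$ has trivial third component; likewise $b$ is purely of bidegree $(q-1,q-1)$ with $\overline{b}=(-1)^{q-1}b$ and $\phi(b)=(\partial b-\overline{\partial}b,2\partial b,b)$, and $c$ lives only in bidegrees $(p',q')$ with $p'+q'=k-1$ and $p',q'<k$.

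For the first summand of $h_1(a\otimes b\otimes c)$, namely $\psi(h(\phi(a)\cup_1\phi(b))\cup_1\phi(c))$, the $\omega$-component of $\phi(a)\cup_1\phi(b)$ is $a\wedge b$, purely of bidegree $(p+q-1,p+q-1)$. Applying the $h$-formula at degree $2p+2q-1$ and weight $p+q$ yields $\overline{F}^{p+q}(a\wedge b)=F^{p+q}(a\wedge b)=0$ on bidegree grounds, and $\pi_{p+q}(a\wedge b)=\tfrac{1}{2}\bigl(a\wedge b+(-1)^{p+q}\overline{a\wedge b}\bigr)=0$ via $\overline{a\wedge b}=(-1)^{p+q-1}(a\wedge b)$. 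The second summand is $(-1)^{2p+1}\psi(\phi(a)\cup_1 h(\phi(b)\cup_1\phi(c)))$; since $h$ always outputs a triple with third component zero and $\phi(a)$ also has third component zero, the $\omega$-component of this cup is $0\wedge a_q+(-1)^{2p}a\wedge 0=0$, and for $k\ge 1$ the total degree $2p+2q+k-2$ is strictly less than $2(p+q+k)$, so $\psi$ reduces to $\pi$ of this zero $\omega$-component.

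For $h_1(b\otimes a\otimes c)$ the argument is parallel. In the first summand, $\phi(b)\cup_1\phi(a)$ has $\omega$-component $b\wedge a+(-1)^{2q-1}2\partial b\wedge 0=b\wedge a$, again purely of bidegree $(p+q-1,p+q-1)$, and the same bidegree/reality cancellation forces $h(\phi(b)\cup_1\phi(a))=0$. In the second summand, $\phi(a)\cup_1\phi(c)$ has $\omega$-component $a\wedge c$, of bidegrees $(p+p',p+q')$ with both coordinates strictly less than $p+k$; the $h$-formula at degree $2p+k\le 2(p+k)-1$ and weight $p+k$ has $\overline{F}^{p+k}(a\wedge c)=F^{p+k}(a\wedge c)=0$ by bidegree, while $\pi_{p+k}(a\wedge c)=0$ by $\overline{a\wedge c}=(-1)^{p+k-1}(a\wedge c)$. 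Once $h(\phi(a)\cup_1\phi(c))$ is the zero triple, cupping with $\phi(b)$ gives $(0,0,0)$ and $\psi$ returns zero.

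The only real obstacle is bookkeeping: tracking bidegrees and reality signs through $\cup_1$, $h$, and $\psi$, and verifying that in every case the same parity cancellation $1+(-1)^{p+q}(-1)^{p+q-1}=0$ (or its $p+k$-analogue) is what drives the vanishing.
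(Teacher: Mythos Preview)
Your proof is correct and follows essentially the same direct unfolding of $h_1$, $\phi$, $\cup_1$, $h$, $\psi$ as the paper's own argument. The only difference is cosmetic: in the first summand of $h_1(a\otimes b\otimes c)$ you kill $h(\phi(a)\cup_1\phi(b))$ outright via the parity identity $\pi_{p+q}(a\wedge b)=0$, whereas the paper carries the term $(\pi_{p+q}(a\wedge b),0,0)$ through the cup with $\phi(c)$ and only then observes that $\psi$ annihilates it because the $\omega$-component vanishes---but the paper does invoke exactly your parity cancellation $\pi_{p+k}(a\wedge c)=0$ when handling $h_1(b\otimes a\otimes c)$.
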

\begin{proof}
By definition, we have $$h_1(a\otimes b\otimes c)=\psi(h(\phi(a)\cup_1\phi(b))\cup_1\phi(c))-\psi(\phi(a)\cup_1 h(\phi(b)\cup_1\phi(c))).$$
Then we compute
\begin{align*}
&\psi(h(\phi(a)\cup_1\phi(b))\cup_1\phi(c))\\
=&\psi\big(h[(a,a,0)\cup_1(\partial b^{q-1,q-1}-\overline{\partial}b^{q-1,q-1},2\partial b^{q-1,q-1},b)]\cup_1(\partial c^{k-1,0}-\overline{\partial}c^{0,k-1},2\partial c^{k-1,0},c)\big)\\
=&\psi\big(h(a\wedge (\partial b^{q-1,q-1}-\overline{\partial}b^{q-1,q-1}),a\wedge 2\partial b^{q-1,q-1},a\wedge b)\cup_1(\partial c^{k-1,0}-\overline{\partial}c^{0,k-1},2\partial c^{k-1,0},c)\big)\\
=&\psi\big((\pi_{p+q}(\overline{F}^{p+q}(a\wedge b)+\overline{F}^{p+q-1}(a\wedge b)),-2F^{p+q}(\pi_{p+q-1}(a\wedge b)),0)\\
&\cup_1(\partial c^{k-1,0}-\overline{\partial}c^{0,k-1},2\partial c^{k-1,0},c)\big)\\
=&\psi\big((\pi_{p+q}(a\wedge b),0,0)\cup_1(\partial c^{k-1,0}-\overline{\partial}c^{0,k-1},2\partial c^{k-1,0},c)\big)\\
=&\psi\big((\pi_{p+q}(a\wedge b)\wedge (\partial c^{k-1,0}-\overline{\partial}c^{0,k-1}),0,0\big)=0
\end{align*}
and
\begin{align*}
&\psi(\phi(a)\cup_1 h(\phi(b)\cup_1\phi(c)))\\
=&\psi\big(\phi(a)\cup_1 h[(\partial b^{q-1,q-1}-\overline{\partial}b^{q-1,q-1},2\partial b^{q-1,q-1},b)\cup_1 (\partial c^{k-1,0}-\overline{\partial}c^{0,k-1},2\partial c^{k-1,0},c)]\big)\\
=&\psi\big(\phi(a)\cup_1 h((\partial b^{q-1,q-1}-\overline{\partial}b^{q-1,q-1})\wedge (\partial c^{k-1,0}-\overline{\partial}c^{0,k-1}),4\partial b^{q-1,q-1}\wedge \partial c^{k-1,0},\\
&b\wedge (\partial c^{k-1,0}-\overline{\partial}c^{0,k-1})-2\partial b^{q-1,q-1}\wedge c)\big)\\
=&\psi\big((a,a,0)\cup_1 (\pi_{q+k}(b\wedge (\partial c^{k-1,0}-\overline{\partial}c^{0,k-1})-2\partial b^{q-1,q-1}\wedge c),0,0)\big)\\
=&\psi(a\wedge \pi_{q+k}(b\wedge (\partial c^{k-1,0}-\overline{\partial}c^{0,k-1})-2\partial b^{q-1,q-1}\wedge c),0,0)=0
\end{align*}
Therefore, we have $h_1(a\otimes b\otimes c)=0$. The proof of the identity $h_1(b\otimes a\otimes c)=0$ is similar, just notice that $\pi_{p+k}(a\wedge c)=\frac{1}{2}(a\wedge c+(-1)^{p+k}\overline{a}\wedge \overline{c})=\frac{1}{2}(a\wedge c+(-1)^{p+k}(-1)^{p+k-1}a\wedge c)=0$.
\end{proof}

Now we turn to Deligne homology, the homological counterpart of the Deligne-Beilinson cohomology. In general, Deligne homology is defined by means of currents and smooth singular chains (cf. \cite{Ja}). However, in the case we are only interested in, we don't need to use singular chains. We suppose that $X$ is a smooth proper algebraic variety of dimension $d$ over $\C$.

Denote by ${'E}_X^n$ the space of currents of degree $n$ on $X$. By definition, ${'E}_X^n$ is the continuous dual of $E_X^{-n}$ with respect to the Schwartz topology. We may define a differential ${\rm d}: {'E}_X^n\to {'E}_X^{n+1}$ by setting $${\rm d}T(\varphi)=(-1)^nT({\rm d}\varphi)$$ to make ${'E}_X^*$ a complex of vector spaces. The real structure and the bigrading of $E_X^*$ induce a real structure and a bigrading of ${'E}_X^*$.

There are two typical examples of currents that we shall use frequently.

\begin{exa}\label{205}
(i). Let $i: Y\hookrightarrow X$ be a $k$-dimensional subvariety of $X$, there is a $-2k$-degree current $\delta_Y$ introduced by Lelong (cf. \cite{Le}) which is given by $$\delta_Y(\alpha)=\frac{1}{(2\pi i)^k}\int_{Y^{\rm ns}}i^*\alpha,\quad \alpha\in E_X^{2k}$$ where $Y^{\rm ns}$ is the non-singular locus of $Y$. Notice that $\delta_Y$ is actually in ${'E}_{X,\R}^{-2k}(-k)\cap {'E}_X^{-k,-k}$.

(ii). There is a morphism of complexes $E_X^*\to {'E}_X^*[-2d](-d)$ given by $\omega\mapsto[\omega]$, where $[\omega]$ is the current associated to a smooth form which is defined by $$[\omega](\eta)=\frac{1}{(2\pi i)^d}\int_X\eta\wedge\omega.$$
\end{exa}

\begin{defn}\label{206}
Let $X$ be a smooth proper alegbraic variety over $\C$, the real Deligne homology groups of $X$ are
$${'H}_{\mathcal{D}}^*(X,\R(p)):=H^*(\mathfrak{D}({'E}_X,p)).$$ We usually write $$H_n^{\mathcal{D}}(X,\R(p))={'H}_{\mathcal{D}}^{-n}(X,\R(-p)).$$
\end{defn}

The real Deligne-Beilinson cohomology groups and the real Deligne homology groups form a twisted Poincar\'{e} duality theory in the sense of Bloch-Ogus (cf. \cite{BO}).

\begin{thm}\label{207}(Poincar\'{e} Duality)
Let $X$ be a smooth proper alegbraic variety of dimension $d$ over $\C$, then the morphism of complexes $[\cdot]: E_X^*\to {'E}_X^*[-2d](-d)$ induces an isomorphism
$$\xymatrix{H_{\mathcal{D}}^n(X,\R(p))\ar[r]^-{\sim} & {'H}_{\mathcal{D}}^{n-2d}(X,\R(p-d))=H_{2d-n}^{\mathcal{D}}(X,\R(d-p)).}$$
\end{thm}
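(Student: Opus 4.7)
The plan is to show that the inclusion $[\cdot]\colon E_X^*\to {'E}_X^*[-2d](-d)$ is a quasi-isomorphism compatible with the real structure and the Hodge filtration, and then to deduce the theorem from the five-lemma applied to the mapping cones defining the Deligne complexes on each side. Since $X$ is proper, Remark~\ref{204}(ii) lets me rewrite $\mathfrak{D}^*(X,p)=s\bigl(E_{X,\R}^*(p)\oplus F^pE_X^*\to E_X^*\bigr)$, and the analogous construction applied to the complex of currents produces exactly $\mathfrak{D}^*({'E}_X,p-d)[-2d]$, which by definition computes ${'H}_{\mathcal{D}}^{*-2d}(X,\R(p-d))$. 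The map $[\cdot]$ induces compatible morphisms between each of the three entries of the mapping-cone data.

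The three entry-wise statements I need are: (a) $E_X^*\to {'E}_X^{*-2d}(-d)$ is a quasi-isomorphism, which is the classical de~Rham theorem for currents on a compact smooth manifold; (b) the induced map on the real subcomplexes $E_{X,\R}^*(p)\to {'E}_{X,\R}^{*-2d}(p-d)$ is a quasi-isomorphism, which follows from (a) by extracting real parts, the Tate twist $(2\pi i)^p$ playing no role; and (c) $F^pE_X^*\to F^p\bigl({'E}_X^{*-2d}(-d)\bigr)$ is a quasi-isomorphism. For (c) the key observation is that $[\cdot]$ shifts bidegree by $(-d,-d)$, exactly matching the shift $[-2d]$ and the twist $(-d)$ that re-align the filtration index so that level $p$ is preserved. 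Passing to the graded pieces of the Hodge filtration then reduces the statement to the $\overline{\partial}$-quasi-isomorphism $E_X^{p',*}\hookrightarrow {'E}_X^{p'-d,*-2d}$ for each $p'\geq p$, and this is the standard fact that both the smooth and the current Dolbeault complexes are fine resolutions of $\Omega_X^{p'}$ and therefore both compute $H^*(X,\Omega_X^{p'})$ on the compact complex manifold $X$.

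Once (a), (b), (c) are established, the five-lemma applied to the long exact cohomology sequences of the short exact sequences underlying the mapping cones $s(\cdot)=\mathrm{cone}(\cdot)[-1]$ on each side produces a quasi-isomorphism $\mathfrak{D}^*(E_X,p)\xrightarrow{\sim}\mathfrak{D}^*({'E}_X,p-d)[-2d]$, and taking cohomology in degree $n$ yields the required isomorphism $H^n_{\mathcal{D}}(X,\R(p))\xrightarrow{\sim}{'H}^{n-2d}_{\mathcal{D}}(X,\R(p-d))=H^{\mathcal{D}}_{2d-n}(X,\R(d-p))$. The main obstacle is the filtered statement (c): the compatibility of the inclusion with the Hodge filtration rests on the Dolbeault-type input that currents compute the same $\overline{\partial}$-cohomology as smooth forms on a compact complex manifold, which is exactly where the compactness hypothesis enters. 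Granting this input, the rest of the argument is formal homological algebra.
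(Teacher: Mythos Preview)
Your argument is correct and is the standard one; the paper does not give a proof at all but simply cites \cite[Corollary 5.40]{BKK}. What you have written is essentially the content of that reference: the map $[\cdot]$ is a filtered quasi-isomorphism respecting real structures, and the result follows by applying the five-lemma to the cone construction defining the Deligne complex.

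One small correction on where the properness hypothesis actually enters. The Dolbeault statement in your step (c)---that the current complex ${'E}_X^{p',*}$ is a fine resolution of $\Omega_X^{p'}$---holds on any complex manifold, compact or not, since fineness is a local property. The place properness is genuinely used is at the very beginning, where you invoke Remark~\ref{204}(ii) to replace $E_{\log}^*(X)$ by $E_X^*$; without properness the Deligne--Beilinson complex is built from forms with logarithmic singularities along a compactifying divisor, and there is no map from such forms to currents on $X$ itself. So the hypothesis is needed to set up the map $[\cdot]$ on the correct source complex, not to prove the filtered quasi-isomorphism once that map exists.
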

\begin{proof}
This is \cite[Corollary 5.40]{BKK}.
\end{proof}

\subsection{Equivariant higher Bott-Chern forms}
\label{sec:2.2}
In this subsection, we introduce the higher Bott-Chern forms associated to exact $n$-cubes in the category of equivariant hermitian vector bundles. This generalizes the theory of equivariant Chern-Weil forms to an iterated version, in the sense that degree $n$ forms measure the lack of additivity of degree $n-1$ forms. Hence one obtains a delooping theory of equivariant Chern character for higher K-theory, as Schechtman suggested in \cite{Sch}.
Although the concept of equivariant higher Bott-Chern form is new, its construction is essentially the same as the non-equivariant case. So we refer to Burgos-Wang's original paper \cite{BW} for all details we omit.

Let $<-1,0,1>^n$ be the $n$-th power of the ordered set $<-1,0,1>$, it is naturally partially ordered by the relation $\leq$ with law $(i_1,\ldots,i_n)\leq (j_1,\ldots,j_n)$ if $i_1\leq j_1,\ldots,i_n\leq j_n$. Then $<-1,0,1>^n$ can be viewed as a category whose objects are $(i_1,\ldots,i_n)$ with $i_k\in <-1,0,1>$ and the set of morphisms from $(i_1,\ldots,i_n)$ to $(j_1,\ldots,j_n)$ contains one element if $(i_1,\ldots,i_n)\leq (j_1,\ldots,j_n)$ and is empty otherwise. Now, for an exact category $\mathcal{U}$, an $n$-cube in $\mathcal{U}$ is a functor $\mathcal{F}$ from $<-1,0,1>^n$ to $\mathcal{U}$.

Let $\mathcal{F}$ be an $n$-cube in $\mathcal{U}$, (i). for $1\leq i\leq n$ and $j\in <-1,0,1>$, the $(n-1)$-cube $\partial_i^j\mathcal{F}$ defined by $(\partial_i^j\mathcal{F})_{\alpha_1,\cdots,\alpha_{n-1}}=\mathcal{F}_{\alpha_1,\cdots,\alpha_{i-1},j,\alpha_i,\cdots,\alpha_{n-1}}$ is called a face of $\mathcal{F}$;

(ii). for $\alpha\in <-1,0,1>^{n-1}$ and $1\leq i \leq n$, the $1$-cube $\partial_i^\alpha$ defined by $$\mathcal{F}_{\alpha_1,\cdots,\alpha_{i-1},-1,\alpha_i,\cdots,\alpha_{n-1}}\to \mathcal{F}_{\alpha_1,\cdots,\alpha_{i-1},0,\alpha_i,\cdots,\alpha_{n-1}}\to \mathcal{F}_{\alpha_1,\cdots,\alpha_{i-1},1,\alpha_i,\cdots,\alpha_{n-1}}$$ is called an edge of $\mathcal{F}$.

\begin{defn}\label{208}
An $n$-cube $\mathcal{F}$ in an exact category is said to be exact if all edges of $\mathcal{F}$ are short exact sequences.
\end{defn}

From now on, all cubes we mention are exact, so we just write cubes instead of exact cubes for short. Still let $\mathcal{U}$ be an exact category, we denote by $C_n\mathcal{U}$ the set of all $n$-cubes in $\mathcal{U}$, then $\partial_i^j$ induces a map from $C_n\mathcal{U}$ to $C_{n-1}\mathcal{U}$.

Let $\mathcal{F}$ be an $n$-cube in $\mathcal{U}$. For $1\leq i\leq n+1$, we denote by $S_i^1\mathcal{F}$ the $(n+1)$-cube
$$(S_i^1\mathcal{F})_{\alpha_1,\cdots,\alpha_{n+1}}=\left\{
                                                      \begin{array}{ll}
                                                        0, & \alpha_i=1; \\
                                                        \mathcal{F}_{\alpha_1,\cdots,\alpha_{i-1},\alpha_{i+1},\cdots,\alpha_{n+1}}, & \alpha_i\neq1,
                                                      \end{array}
                                                    \right.$$
such that the morphisms $(S_i^1\mathcal{F})_{\alpha_1,\cdots,\alpha_{i-1},-1,\alpha_{i+1},\cdots,\alpha_{n+1}}\to (S_i^1\mathcal{F})_{\alpha_1,\cdots,\alpha_{i-1},0,\alpha_{i+1},\cdots,\alpha_{n+1}}$ are the identities of $(S_i^1\mathcal{F})_{\alpha_1,\cdots,\alpha_{i-1},\alpha_{i+1},\cdots,\alpha_{n+1}}$. Similarly, we have $S_i^{-1}\mathcal{F}$ and hence maps
$$S_i^j: C_n\mathcal{U}\to C_{n+1}\mathcal{U}\quad\text{for}\quad j=-1,1.$$

The cubes in the image of $S_i^j$ are said to be degenerate. Denote by $\Z C_n\mathcal{U}$ the free abelian group generated by $C_n\mathcal{U}$ and by $D_n$ the subgroup of $\Z C_n{\mathcal{U}}$ generated by all degenerate $n$-cubes. Set $\widetilde{\Z}C_n\mathcal{U}=\Z C_n\mathcal{U}/{D_n}$ and
$$d=\sum_{i=1}^n\sum_{j=-1}^1(-1)^{i+j-1}\partial_i^j: \widetilde{\Z}C_n\mathcal{U}\to \widetilde{\Z}C_{n-1}\mathcal{U}.$$
Then $\widetilde{\Z}C_*\mathcal{U}=(\widetilde{\Z}C_n\mathcal{U},d)$ becomes a homological complex.

Let $X$ be a smooth algebraic variety over $\C$, and let $E$ be a vector bundle on $X$. Recall that a hermitian metric $h$ on $E$ is said to be smooth at infinity if there exists a smooth compactification $\widetilde{X}$ of $X$ and a vector bundle $\widetilde{E}$ on $\widetilde{X}$ with smooth metric $\widetilde{h}$ such that $(\widetilde{E},\widetilde{h})\mid_X$ is isometric to $(E,h)$. As a corollary of \cite[Proposition 2.2]{BW}, any vector bundle $E$ on $X$ admits a smooth at infinity hermitian metric. Now, we regard $X$ as a $\mu_n$-equivariant variety with the trivial action and suppose that $E$ admits a $\mu_n$-structure. Then for any hermitian metric $h$ on $E$, its average with respect to $\mu_n$
$$\vartheta(h):=\frac{1}{n}\sum_{a\in \mu_n}a^*h$$
is a $\mu_n$-invariant hermitian metric. We need the following result.

\begin{prop}\label{209}
Let $X$ be a smooth algebraic variety over $\C$ with trivial $\mu_n$-action, and let $(E,h)$ be a $\mu_n$-equivariant vector bundle on $X$ with smooth at infinity metric. Then the metric $\vartheta(h)$ is still smooth at infinity. Consequently, any $\mu_n$-equivariant vector bundle on $X$ admits a $\mu_n$-invariant smooth at infinity hermitian metric.
\end{prop}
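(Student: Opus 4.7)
My plan is to exploit the fact that the trivial $\mu_n$-action on $X$ forces a canonical, globally defined eigenbundle decomposition of $E$, so that the assertion reduces to an elementary statement about subbundles and orthogonal direct sums of metrics. Since $\mu_n$ is diagonalisable and acts trivially on $X$, a $\mu_n$-equivariant structure on $E$ is the same as a $\Z/n\Z$-grading, hence $E=\bigoplus_{\chi\in\widehat{\mu_n}}E_\chi$ globally, with $a\in\mu_n(\C)$ acting on $E_\chi$ as multiplication by $\chi(a)$. Using the character orthogonality relation
\[
\tfrac{1}{n}\sum_{a\in\mu_n(\C)}\chi(a)\overline{\chi'(a)}=\delta_{\chi,\chi'},
\]
a direct computation gives $\vartheta(h)(s_\chi,t_{\chi'})=\delta_{\chi,\chi'}h(s_\chi,t_\chi)$ for $s_\chi\in E_\chi$ and $t_{\chi'}\in E_{\chi'}$, so $\vartheta(h)=\bigoplus_\chi h|_{E_\chi}$ as an orthogonal sum. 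It therefore suffices to exhibit, for each nonzero $E_\chi$, a smooth-at-infinity extension of the hermitian bundle $(E_\chi,h|_{E_\chi})$.

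For this, I would start from a datum $(\widetilde{X},\widetilde{E},\widetilde{h})$ witnessing the smoothness at infinity of $(E,h)$. The inclusion $E_\chi\hookrightarrow E$ corresponds to a morphism from $X$ into the relative Grassmannian $\mathrm{Gr}(r_\chi,\widetilde{E})$ over $\widetilde{X}$, where $r_\chi=\mathrm{rk}\,E_\chi$, equivalently a rational map defined on $\widetilde{X}$. Since this Grassmannian is proper over $\widetilde{X}$, Hironaka's resolution of indeterminacies yields a smooth projective birational modification $\pi:\widetilde{X}'_\chi\to\widetilde{X}$ which is an isomorphism over $X$, keeps the boundary normal crossing, and makes the rational map regular; pulling back the universal subbundle produces a subbundle $\widetilde{E}_\chi\subset\pi^*\widetilde{E}$ on $\widetilde{X}'_\chi$ extending $E_\chi$, and $\pi^*\widetilde{h}|_{\widetilde{E}_\chi}$ is a smooth metric extending $h|_{E_\chi}$.

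Finally, by the directedness of the opposed category $I^\circ$ of smooth compactifications of $X$, I can choose a common refinement $\widetilde{X}'$ dominating all the $\widetilde{X}'_\chi$ and pull back to obtain compatible extensions $(\widetilde{E}_\chi,\widetilde{h}_\chi)$ on $\widetilde{X}'$; their orthogonal direct sum realises $(E,\vartheta(h))$ as smooth at infinity, which proves the first assertion. The consequence is then immediate: applying the first part to any smooth-at-infinity metric $h_0$ on $E$ --- whose existence is guaranteed by the non-equivariant corollary of \cite[Proposition 2.2]{BW} --- one obtains $\vartheta(h_0)$, which is $\mu_n$-invariant by construction and smooth at infinity by what has just been proved. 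The main technical obstacle is the resolution-of-indeterminacy step needed to extend each eigenbundle as a subbundle on a suitable compactification; everything else is formal manipulation with characters and direct sums.
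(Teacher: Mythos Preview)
Your argument is correct and takes a different route from the paper's. The paper's proof is very short: given any witness $(\widetilde X,\widetilde E,\widetilde h)$ for the smoothness at infinity of $(E,h)$, it asserts that the $\mu_n$-structure on $E$ ``can be naturally extended to $\widetilde E$'', and then observes that $\vartheta(\widetilde h)$ is a smooth metric on $\widetilde E$ restricting to $\vartheta(h)$. You instead compute $\vartheta(h)=\bigoplus_\chi h|_{E_\chi}$ via character orthogonality and extend each eigenpiece $(E_\chi,h|_{E_\chi})$ separately, by resolving the classifying map to the relative Grassmannian $\mathrm{Gr}(r_\chi,\widetilde E)$ and restricting the pulled-back metric, finally passing to a common refinement in $I^\circ$.

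Your approach is longer but more careful on exactly the point the paper glosses over. The claim that the $\mu_n$-structure extends to the \emph{given} $\widetilde E$ is not automatic and can fail: for $X=\mathbb A^1\subset\widetilde X=\mathbb P^1$ with $\widetilde E=\mathcal O_{\mathbb P^1}^{\oplus 2}$, the $\mu_2$-structure on $E=\mathcal O_X^{\oplus 2}$ given by $\bigl(\begin{smallmatrix}1&z\\0&-1\end{smallmatrix}\bigr)$ has a pole at $\infty$ and its eigenline decomposition degenerates there, so no endomorphism of $\widetilde E$ restricts to it. Your Grassmannian/Hironaka step is precisely what produces a \emph{modified} compactification on which each $E_\chi$ extends as an honest subbundle, after which the orthogonal-sum metric is visibly smooth. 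In short, the paper's route buys brevity by leaving the extension step unjustified; yours buys a complete argument at the cost of invoking resolution of indeterminacies.
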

\begin{proof}
Let $(\widetilde{X},\widetilde{E},\widetilde{h})$ be any smooth compactification of $(X,E,h)$. Then the $\mu_n$-structure on $E$ can be naturally extended to $\widetilde{E}$ so that the isomorphism $\widetilde{E}\mid_X\cong E$ is $\mu_n$-equivariant. Since now $\widetilde{E}$ is endowed with a $\mu_n$-structure, the average metric $\vartheta(\widetilde{h})$ for $\widetilde{h}$ makes sense, and it is clear that $\vartheta(\widetilde{h})\mid_X=\vartheta(h)$. Therefore, the metric $\vartheta(h)$ is smooth at infinity.
\end{proof}

In this paper, all hermitian metrics we consider are smooth at infinity metrics. For a smooth algebraic variety $X/\C$ with a $\mu_n$-action, we shall  denote by $\widehat{\mathcal{P}}(X)$ the exact category of $\mu_n$-equivariant vector bundles on $X$ equipped with smooth at infinity hermitian metrics, and by $\widehat{\mathcal{P}}(X,\mu_n)$ the exact category of $\mu_n$-equivariant vector bundles on $X$ equipped with $\mu_n$-invariant smooth at infinity hermitian metrics. Define $R_n=\R$ if $n=1$ and $R_n=\C$ otherwise. In the rest of this subsection, we focus on the cubes in the category $\mathcal{U}=\widehat{\mathcal{P}}(X,\mu_n)$ whence the $\mu_n$-action on $X$ is trivial, which are called the equivariant hermitian cubes. In particular, we shall construct a morphism of complexes from $\widetilde{\Z}C^*\mathcal{U}$ to $\bigoplus_{p\geq0}\widetilde{D}^*(X,p)[2p]_{R_n}$, called the equivariant higher Bott-Chern form, where $\widetilde{D}^*(X,p)$ is a complex computing the real Deligne-Beilinson cohomology of $X$ and $\widetilde{\Z}C^*\mathcal{U}$ is the complex associated to the homological complex $\widetilde{\Z}C_*\mathcal{U}$ defined by $\widetilde{\Z}C^k\mathcal{U}=\widetilde{\Z}C_{-k}\mathcal{U}$.

Notice that the cartesian product of projective lines $(\mathbb{P}^1)^\cdot$ over any base has a cocubical scheme structure, the coface and codegeneracy maps
$$d_j^i: (\mathbb{P}^1)^k\to (\mathbb{P}^1)^{k+1},\quad i=1,\cdots,k,j=0,1,$$
$$s^i: (\mathbb{P}^1)^k\to (\mathbb{P}^1)^{k-1},\quad i=1,\cdots,k,$$
are given by
$$d_0^i(x_1,\cdots,x_k)=(x_1,\cdots,x_{i-1},(0:1),x_i,\cdots,x_k),$$
$$d_1^i(x_1,\cdots,x_k)=(x_1,\cdots,x_{i-1},(1:0),x_i,\cdots,x_k),$$
$$s^i(x_1,\cdots,x_k)=(x_1,\cdots,x_{i-1},x_{i+1},\cdots,x_k).$$

With the above notations, the complexes $\mathfrak{D}^*(X\times(\mathbb{P}^1)^\cdot,p)$ form a cubical complex with face and degeneracy maps
$$d_i^j=({\rm Id}\times d_j^i)^*\quad\text{and}\quad s_i=({\rm Id}\times s^i)^*.$$

We shall write $\mathfrak{D}_{\mathbb{P}}^{r,k}(X,p)=\mathfrak{D}^r(X\times (\mathbb{P}^1)^{-k},p)$ and denote by $\mathfrak{D}_{\mathbb{P}}^{*,*}(X,p)$ the associated double complex with differentials
$$d'=d_{\mathfrak{D}}\quad\text{and}\quad d''=\sum(-1)^{i+j-1}d_i^j.$$
Let us write $p_0: X\times(\mathbb{P}^1)^k\to X$ for the projection over the first factor and $p_i: X\times(\mathbb{P}^1)^k\to \mathbb{P}^1, i=1,\cdots,k$, for the projection over the $i$-th projective line. Denote by $(x:y)$ the homogeneous coordinates of $\mathbb{P}^1$. The real function $$g=\log\frac{x\overline{x}+y\overline{y}}{x\overline{x}}$$ is defined on the open set $\mathbb{P}^1\setminus\{0\}$ with logarithmic singularity at $0$. Let $\omega=\partial\overline{\partial}g\in (2\pi i)E_{\mathbb{P}^1,\R}^2$, then $\omega$ is a K\"{a}hler form over $\mathbb{P}^1$. Set $\omega_i=p_i^*\omega\in E_{\log}^*(X\times(\mathbb{P}^1)^k).$ For an element $$x=(\alpha,\beta,\eta)\in \mathfrak{D}^r(X\times(\mathbb{P}^1)^k,p),$$ we shall write $$\omega_i\wedge x=(\omega_i\wedge\alpha,\omega_i\wedge\beta,\omega_i\wedge\eta)\in \mathfrak{D}^{r+2}(X\times(\mathbb{P}^1)^k,p+1).$$

\begin{defn}\label{210}
Let $\widetilde{\mathfrak{D}}^{*,*}(X,p)$ be the double complex given by $$\widetilde{\mathfrak{D}}^{r,k}(X,p)=\mathfrak{D}_{\mathbb{P}}^{r,k}(X,p)/{\sum_{i=1}^{-k}s_i\big(\mathfrak{D}_{\mathbb{P}}^{r,k+1}(X,p)\big)\oplus
\omega_i\wedge s_i\big(\mathfrak{D}_{\mathbb{P}}^{r-2,k+1}(X,p-1)\big)}.$$
We define $\widetilde{\mathfrak{D}}^*(X,p)$ as the associated simple complex. The differential of this complex will be denoted by $d$. Notice that this complex is meant to kill the degenerate classes and the classes coming from the projective spaces.
\end{defn}

An interesting fact for the complex $\widetilde{\mathfrak{D}}^*(X,p)$ is the following result whose proof is a repetition of the proofs of \cite[Proposition 1.2 and Lemma 1.3]{BW}.

\begin{prop}\label{211}
The natural morphism of complexes $$\iota: \mathfrak{D}^*(X,p)=\widetilde{\mathfrak{D}}^{*,0}(X,p)\to \widetilde{\mathfrak{D}}^*(X,p)$$ is a quasi-isomorphism.
\end{prop}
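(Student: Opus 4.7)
I would compare the two complexes by means of the spectral sequence associated to the column filtration of the double complex $\widetilde{\mathfrak{D}}^{*,*}(X,p)$. The total complex $\widetilde{\mathfrak{D}}^*(X,p)$ inherits an increasing filtration whose $k$-th level is the union of the columns $\widetilde{\mathfrak{D}}^{*,\leq k}(X,p)$. The associated $E_1$-page has $E_1^{k,r}=H^r_{d'}\bigl(\widetilde{\mathfrak{D}}^{*,k}(X,p)\bigr)$, and the edge map into the column $k=0$ is exactly $\iota$, because for $k=0$ the sum of degenerate and $\omega_i$-twisted subspaces that is quotiented out is empty and $\widetilde{\mathfrak{D}}^{*,0}(X,p)=\mathfrak{D}^*(X,p)$. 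It is therefore enough to prove that $H^r_{d'}\bigl(\widetilde{\mathfrak{D}}^{*,k}(X,p)\bigr)=0$ for every $k\leq -1$; the spectral sequence then collapses at $E_2$ onto the single column $k=0$ and $\iota$ becomes a quasi-isomorphism.

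The vanishing of these column cohomologies is a chain-level incarnation of the iterated projective bundle formula for Deligne--Beilinson cohomology. Applying Theorem~\ref{202}(i) together with the usual projective bundle isomorphism $H^*_{\mathcal{D}}(Y\times\mathbb{P}^1,\R(p))\cong H^*_{\mathcal{D}}(Y,\R(p))\oplus [\omega]\cdot H^{*-2}_{\mathcal{D}}(Y,\R(p-1))$ to each of the $-k$ factors in turn yields
$$H^r_{d'}\bigl(\mathfrak{D}^*_{\mathbb{P}}(X,p)|_{k}\bigr)\cong \bigoplus_{S\subseteq\{1,\ldots,-k\}} H^{r-2|S|}_{\mathcal{D}}\bigl(X,\R(p-|S|)\bigr)\cdot\omega_S.$$
Any summand with $S\neq\{1,\ldots,-k\}$ contains an index $i\notin S$ and hence lies in the image of $s_i^*$, while any summand with $S\neq\emptyset$ contains some $i\in S$ and lies in the image of $\omega_i\wedge s_i^*$. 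For $-k\geq 1$ every summand is killed in at least one of the two ways, so after quotienting every cohomology class becomes trivial.

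To lift this cohomological vanishing to the required chain-level acyclicity of $\widetilde{\mathfrak{D}}^{*,k}(X,p)$, I would imitate the explicit contracting homotopy produced in \cite[Prop.~1.2 and Lem.~1.3]{BW}. The homotopy is constructed by fibre integration along the projection $X\times (\mathbb{P}^1)^{-k}\to X\times (\mathbb{P}^1)^{-k+1}$ that forgets a chosen factor, using the Lelong-type potential $g=\log\bigl((x\overline{x}+y\overline{y})/(x\overline{x})\bigr)$ with $\partial\overline{\partial}g=\omega$; the Poincar\'e lemma on each projective line fiber, combined with the identities relating $d_{\mathfrak{D}}$, $\pi_p$ and $\partial\overline{\partial}$, allows one to rewrite any element of $\mathfrak{D}^{*,k}_{\mathbb{P}}(X,p)$ modulo $d_{\mathfrak{D}}$-boundaries as a sum of terms of the form $s_i^*(\alpha)$ and $\omega_i\wedge s_i^*(\beta)$. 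Once the $k=-1$ column is handled this way, an induction on $-k$ (peeling off one factor at a time, and using the Hodge-filtered Künneth structure for the additional $\mathbb{P}^1$ directions) disposes of the remaining columns.

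The hardest part will be the last step: constructing the chain-level homotopy so that it interacts correctly with the piecewise-defined Deligne--Beilinson differential $d_{\mathfrak{D}}$ of Theorem~\ref{202}, which switches form across the degree $2p-1$. This is essentially the content of Burgos--Wang's original argument and the only real calculation; the spectral sequence collapse and the reduction to the projective bundle formula are formal once the columns for $k\leq -1$ are known to be acyclic.
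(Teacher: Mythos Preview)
Your proposal is correct and follows exactly the route the paper indicates: the paper's own proof is a one-line citation, stating that the argument ``is a repetition of the proofs of \cite[Proposition~1.2 and Lemma~1.3]{BW},'' and your sketch reproduces precisely that argument---reducing via the column spectral sequence to the acyclicity of the $k\leq -1$ columns, identifying those columns with the projective-bundle decomposition, and then invoking the explicit Burgos--Wang contracting homotopy to upgrade cohomological vanishing to chain-level acyclicity. You have spelled out more detail than the paper does, but the approach is the same.
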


\begin{rem}\label{212}
Replace $\mathfrak{D}_{\mathbb{P}}^{r,k}(X,p)$ by $\mathfrak{D}_{\mathbb{P}}^{r,k}(E_{\log}(X),p):=\mathfrak{D}^r\big(E_{\log}(X\times(\mathbb{P}^1)^{-k}),p\big)$, we have analogue $\widetilde{\mathfrak{D}}^*(E_{\log}(X),p)$ for the Deligne complex $\mathfrak{D}^*(E_{\log}(X),p)$ and a quasi-isomorphism
$$\iota: \mathfrak{D}^*(E_{\log}(X),p)=\widetilde{\mathfrak{D}}^{*,0}(E_{\log}(X),p)\to \widetilde{\mathfrak{D}}^*(E_{\log}(X),p).$$
\end{rem}

\begin{defn}\label{213}
Set $\widetilde{D}^*(X,p)=\widetilde{\mathfrak{D}}^*(E_{\log}(X),p)$, it is a complex of real vector spaces whose cohomology groups are isomorphic to the real Deligne-Beilinson cohomology groups of $X$.
\end{defn}

When $X$ is proper, Burgos and Wang gave in \cite[Section 6]{BW} a quasi-inverse of the quasi-isomorphism $\iota: D^*(X,p)\to \widetilde{D}^*(X,p)$ by means of the following Wang forms
$$W_n=\frac{(-1)^n}{2n!}\sum_{i=1}^n(-1)^iS_n^i,$$ with $$S_n^i=\sum_{\sigma\in \mathfrak{S}_n}(-1)^\sigma\log\mid z_{\sigma(1)}\mid^2\frac{dz_{\sigma(2)}}{z_{\sigma(2)}}\wedge\cdots\wedge\frac{dz_{\sigma(i)}}{z_{\sigma(i)}}\wedge\frac{d\overline{z}_{\sigma(i+1)}}{\overline{z}_{\sigma(i+1)}}
\wedge\cdots\wedge\frac{d\overline{z}_{\sigma(n)}}{\overline{z}_{\sigma(n)}}$$
and $\mathfrak{S}_n$ stands for the $n$-th symmetric group.

Actually, for any $\alpha\in \mathfrak{D}^r(X\times (\mathbb{P}^1)^n,p)$, the expression
$${p_0}_*(\alpha\bullet W_n)=\left\{
                               \begin{array}{ll}
                                 \frac{1}{(2\pi i)^n}\int_{(\mathbb{P}^1)^n}\alpha\bullet W_n, & n>0; \\
                                 \alpha, & n=0.
                               \end{array}
                             \right.$$
provides a quasi-isomorphism of complexes $\varphi: \widetilde{D}^*(X,p)\to D^*(X,p)$ which is the quasi-inverse of $\iota$.

Now, let $\overline{E}$ be a hermitian $k$-cube in the category $\mathcal{U}=\widehat{\mathcal{P}}(X,\mu_n)$. We firstly assume that $\overline{E}$ is an emi-cube, that means the metrics on the quotient terms in all edges of $\overline{E}$ are induced by the metrics on the middle terms (cf. \cite[Definition 3.5]{BW}). In \cite[(3.7)]{BW}, Burgos and Wang associated to $\overline{E}$ a hermitian locally free sheaf ${\rm tr}_k(\overline{E})$ on $X\times (\mathbb{P}^1)^k$ in an inductive way. This ${\rm tr}_k(\overline{E})$ is called the $k$-transgression bundle of $\overline{E}$, let us recall its construction. If $k=1$, as an emi-$1$-cube, $\overline{E}$ is a short exact sequence
$$\xymatrix{ 0\ar[r] & \overline{E}_{-1}\ar[r]^-{i} & \overline{E}_0\ar[r] & \overline{E}_1 \ar[r] & 0},$$
where the metric of $\overline{E}_1$ is induced by the metric of $\overline{E}_0$. Then ${\rm tr}_1(\overline{E})$ is the cokernel with quotient metric of the map $E_{-1}\to E_{-1}\otimes\mathcal{O}(1)\oplus E_0\otimes\mathcal{O}(1)$ by the rule
$e_{-1}\mapsto e_{-1}\otimes \sigma_{\infty}\oplus i(e_{-1})\otimes \sigma_0$.
Here $\sigma_0$ (resp. $\sigma_{\infty}$) is the canonical section of the tautological bundle $\mathcal{O}(1)$ on $\mathbb{P}^1$ which vanishes only at $0$ (resp. $\infty$), and $\mathcal{O}(1)$ is endowed with the Fubini-Study metric. If $k>1$, suppose that transgression bundle is defined for $k-1$. Let ${\rm tr}_1(\overline{E})$ be the emi-$(k-1)$-cube over $X\times \mathbb{P}^1$ given by ${\rm tr}_1(\overline{E})_\alpha={\rm tr}_1(\partial^\alpha_1(\overline{E}))$, then ${\rm tr}_k(\overline{E})$ is defined as ${\rm tr}_{k-1}\big({\rm tr}_1(\overline{E})\big)$.

On the other hand, for any hermitian cube $\overline{E}$ in the category $\mathcal{U}=\widehat{\mathcal{P}}(X,\mu_n)$, there is a unique way to change the metrics on $E_\alpha$ for $\alpha\nleq0$ such that the obtained new hermitian cube is emi (cf. \cite[Proposition 3.6]{BW}). Actually, for $i=1,\ldots,k$, define $\lambda_i^1\overline{E}$ to be $$(\lambda_i^1\overline{E})_\alpha=\left\{
                                                                                          \begin{array}{ll}
                                                                                            (E_\alpha,h_\alpha), & \text{if } \alpha_i=-1,0; \\
                                                                                            (E_\alpha,h'_\alpha), & \text{if } \alpha_i=1,
                                                                                          \end{array}
                                                                                        \right.$$
where $h'_\alpha$ is the metric induced by $h_{\alpha_1,\ldots,\alpha_{i-1},0,\alpha_{i+1},\ldots,\alpha_k}$. Thus $\lambda_i^1\overline{E}$ has the same locally free sheaves as $\overline{E}$, but the metrics on the face $\partial_i^1E$ are induced by the metrics of the face $\partial_i^0\overline{E}$.
To define the correct higher Bott-Chern forms, we have to measure the difference between $\overline{E}$ and $\lambda_i^1\overline{E}$. Let $\lambda_i^2(\overline{E})$ be the hermitian $k$-cube determined by $\partial_i^{-1}\lambda_i^2(\overline{E})=\partial_i^1\overline{E}$, $\partial_i^0\lambda_i^2(\overline{E})=\partial_i^1\lambda_i^1(\overline{E})$, and $\partial_i^1\lambda_i^2(\overline{E})=0$. Set $\lambda_i=\lambda_i^1+\lambda_i^2$, $\lambda=\lambda_k\circ\cdots\circ\lambda_1$ if $k\geq1$ and $\lambda={\rm Id}$ otherwise.

\begin{prop}\label{214}
Write $\widetilde{\Z}C_*(X,\mu_n):=\widetilde{\Z}C_*\mathcal{U}$ and denote by $\widetilde{\Z}C_*^{\rm emi}(X,\mu_n)$ the subcomplex of $\widetilde{\Z}C_*(X,\mu_n)$ generated by emi-cubes. Then the map $\lambda$ induces a morphism of complexes $$\widetilde{\Z}C_*(X,\mu_n)\to \widetilde{\Z}C_*^{\rm emi}(X,\mu_n)$$ which is the quasi-inverse of the inclusion $\widetilde{\Z}C_*^{\rm emi}(X,\mu_n)\hookrightarrow \widetilde{\Z}C_*(X,\mu_n)$.
\end{prop}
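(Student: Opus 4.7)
The plan is to imitate the argument of \cite[Proposition 3.6]{BW} from the non-equivariant setting, checking at each step that the construction preserves $\mu_n$-equivariance. The only observation needed beyond Burgos-Wang's argument is that induced quotient metrics through $\mu_n$-equivariant surjections and convex combinations of $\mu_n$-invariant metrics are automatically $\mu_n$-invariant.

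First I would verify well-definedness: $\lambda(\overline{E})$ actually lies in $\widetilde{\Z}C_*^{\rm emi}(X,\mu_n)$ for every $\overline{E}\in\widetilde{\Z}C_*(X,\mu_n)$. The operator $\lambda_i^1$ modifies the metric at each face $\partial_i^1\overline{E}$ to the quotient metric induced by the $\mu_n$-equivariant surjection $\partial_i^0\overline{E}\twoheadrightarrow\partial_i^1\overline{E}$, which remains $\mu_n$-invariant. The auxiliary cube $\lambda_i^2\overline{E}$ is built from the same underlying sheaves, hence inherits a $\mu_n$-structure. A short induction on $i$ then shows that after applying $\lambda_k\circ\cdots\circ\lambda_1$ every direction has induced metrics on its $+1$-face and that the $\lambda_j$ with $j\neq i$ do not disturb the emi-property already achieved in direction $i$.

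Next I would compute the composition $\lambda\circ\iota$. When $\overline{E}$ is already emi, $\lambda_i^1\overline{E}=\overline{E}$, while $\lambda_i^2\overline{E}$ reduces to $S_i^1(\partial_i^1\overline{E})$: its edge in the $i$-direction is $\partial_i^1\overline{E}\xrightarrow{\operatorname{id}}\partial_i^1\overline{E}\to 0$ and its face at $\alpha_i=1$ vanishes. Hence $\lambda_i^2\overline{E}$ is degenerate and $\lambda\circ\iota=\operatorname{Id}$ on the nose in $\widetilde{\Z}C_*^{\rm emi}(X,\mu_n)$. The chain-map property $\lambda\circ d=d\circ\lambda$ modulo degenerate cubes follows from a case analysis of $\partial_j^\epsilon\lambda_i^s$ for $s\in\{1,2\}$; the splitting $\lambda_i=\lambda_i^1+\lambda_i^2$ is engineered precisely so that the discrepancies cancel.

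The main obstacle is the opposite composition $\iota\circ\lambda$: one has to produce an explicit chain homotopy $H$ with $\iota\circ\lambda-\operatorname{Id}=dH+Hd$ modulo degenerate cubes. Here I would reproduce Burgos-Wang's homotopy, which for each direction $i$ is built from a $(k+1)$-cube interpolating, via a $\mu_n$-invariant convex combination, between the original metric on $\partial_i^1\overline{E}$ and its induced counterpart. The verification that this $H$ has the required boundary is combinatorially identical to the non-equivariant calculation of \cite{BW}; the only thing to check is that every metric appearing in the interpolation remains $\mu_n$-invariant, which is immediate from the two observations in the first paragraph. Assembling these pieces yields the stated quasi-inverse.
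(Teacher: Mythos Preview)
The paper states this proposition without proof; the citation to \cite[Proposition~3.6]{BW} in the preceding paragraph signals that the equivariant statement is meant to follow verbatim from the non-equivariant one once $\mu_n$-invariance is tracked through the construction. Your plan to reproduce the Burgos--Wang argument while checking at each step that induced quotient metrics remain $\mu_n$-invariant is therefore exactly the paper's implicit approach, and your observations in the first paragraph are precisely what is needed.

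One caveat on the last step: your description of the homotopy as built from ``convex combinations'' of metrics does not match the Burgos--Wang construction. A chain homotopy $H$ here must send a $k$-cube to a $\Z$-linear combination of $(k+1)$-cubes, each carrying \emph{fixed} metrics at its vertices; there is no continuous interpolation. The relevant $(k+1)$-cubes are obtained combinatorially, by inserting in the $i$-th direction an edge of the shape $(\partial_i^1\overline{E},h)\xrightarrow{\ \operatorname{id}\ }(\partial_i^1\overline{E},h')\to 0$, where $h$ is the original metric and $h'$ the induced one. This actually simplifies the equivariance check---only your first observation (invariance of induced quotient metrics) is needed, and the second observation about convex combinations is irrelevant. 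If ``interpolating'' was shorthand for this discrete edge, your argument is fine; if you had a genuine one-parameter family of metrics in mind, that is not the right object in this chain-complex setting.
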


\begin{defn}\label{215}
Let $\overline{E}$ be a hermitian $k$-cube in the category $\mathcal{U}=\widehat{\mathcal{P}}(X,\mu_n)$, the equivariant higher Bott-Chern form associated to $\overline{E}$ is defined as $${\rm ch}_g^k(\overline{E}):={\rm ch}_g^0\big({\rm tr}_k\circ \lambda(\overline{E})\big),$$ where ${\rm ch}_g^0(F,h_F)=\sum_{l=1}^n\zeta_l{\rm Tr}(\exp(-K_l))$ is the equivariant Chern-Weil form associated to an equivariant hermitian vector bundle
$\overline{F}=\oplus_{l=1}^n\overline{F}_l$ with curvature form $K_l$ for $\overline{F}_l$.
\end{defn}

\begin{thm}\label{216}
The equivariant higher Bott-Chern forms induce a morphism of complexes
$$\xymatrix{ \widetilde{\Z}C^*(X,\mu_n) \ar[rr]^-{\lambda} && \widetilde{\Z}C^*_{\rm emi}(X,\mu_n) \ar[rr]^-{{\rm ch}_g^0({\rm tr}_*(\cdot))} && \bigoplus_{p\geq0}\widetilde{D}^*(X,p)[2p]_{R_n},}$$ which is denoted by ${\rm ch}_g$.
\end{thm}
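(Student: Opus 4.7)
The plan is to reduce the equivariant statement to the non-equivariant theorem of Burgos--Wang by exploiting the fact that $\mu_n$ acts trivially on $X$. First, by Proposition~\ref{214}, the map $\lambda$ is already known to be a morphism of complexes from $\widetilde{\Z}C^*(X,\mu_n)$ to $\widetilde{\Z}C^*_{\rm emi}(X,\mu_n)$, so it suffices to verify that ${\rm ch}_g^0\circ{\rm tr}_*$ defines a morphism of complexes on the subcomplex of emi-cubes. Concretely, for an emi $k$-cube $\overline{E}$ I need to check two things: (a) if $\overline{E}$ is degenerate or arises from pullback along a codegeneracy $s^i$, then ${\rm ch}_g^0({\rm tr}_k(\overline{E}))$ lies in the subcomplex by which $\widetilde{\mathfrak{D}}^*(E_{\log}(X),p)$ is quotiented; (b) the cubical boundary $\sum_{i,j}(-1)^{i+j-1}\partial_i^j$ on the domain matches the sign of the Deligne differential on the target.

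The key observation that makes the equivariant case a formal consequence of the non-equivariant one is that, since $\mu_n$ acts trivially on $X$ (and trivially on the auxiliary $(\mathbb{P}^1)^k$ in the transgression construction), every object of $\widehat{\mathcal{P}}(X,\mu_n)$ splits canonically as an orthogonal direct sum of its eigenbundles $\overline{E}=\bigoplus_l\overline{E}_l$. Since short exact sequences of equivariant bundles over $X$ split into short exact sequences of eigenbundles, each emi-cube $\overline{E}$ decomposes as a direct sum $\bigoplus_l \overline{E}_l$ of non-equivariant emi-cubes, and the transgression construction is compatible with direct sums and with the metrics used to form quotients, hence ${\rm tr}_k(\overline{E})=\bigoplus_l{\rm tr}_k(\overline{E}_l)$ as equivariant hermitian bundles on $X\times(\mathbb{P}^1)^k$. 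Consequently, by Definition~\ref{215},
\begin{equation*}
{\rm ch}_g^k(\overline{E})=\sum_{l=1}^n \zeta_l\cdot{\rm ch}^k(\overline{E}_l),
\end{equation*}
where ${\rm ch}^k$ is the non-equivariant higher Bott-Chern form of Burgos--Wang applied to the $l$-th eigen-cube.

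With this identification, both conditions (a) and (b) reduce, eigencomponent by eigencomponent, to the corresponding non-equivariant statements proved in \cite{BW}. The compatibility with codegeneracies and with the subspace generated by $\omega_i\wedge s_i(\cdot)$ in $\widetilde{\mathfrak{D}}^*$ comes from the fact that ${\rm tr}_k$ of a cube degenerate in the $i$-th direction produces a bundle pulled back through $s^i$, so its Chern--Weil form is a codegenerate form; the fact that it is already killed in $\widetilde{D}^*(X,p)$ is then automatic. For the differential compatibility, one checks that the $k$-transgression satisfies the face identities $\partial_i^j{\rm tr}_k(\overline{E})={\rm tr}_{k-1}(\partial_i^j\overline{E})$ (up to terms canceling under the quotient), so that applying ${\rm ch}^0$ followed by summing over faces with the cubical signs matches $d''$ on the Deligne side, while $d_\mathfrak{D}\circ{\rm ch}^0={\rm ch}^0\circ d'$ holds because ${\rm ch}^0$ of a hermitian bundle is a closed form of pure type $(p,p)$, so its differential is zero in the Deligne complex.

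The main obstacle I expect is not conceptual but bookkeeping: one must verify that the orthogonal splitting into eigenbundles is preserved by the iterative definition ${\rm tr}_k={\rm tr}_{k-1}\circ{\rm tr}_1$, including the fact that the tautological bundle $\mathcal{O}(1)$ and its Fubini--Study metric carry the trivial $\mu_n$-action, so that the morphism $e_{-1}\mapsto e_{-1}\otimes\sigma_\infty\oplus i(e_{-1})\otimes\sigma_0$ is $\mu_n$-equivariant and passes to a decomposition of cokernels. Once this equivariance of ${\rm tr}_k$ is established, everything else is transported from \cite{BW} by $R_n$-linearity in the characters $\zeta_l$, which explains why the target is tensored with $R_n$.
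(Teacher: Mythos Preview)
Your proposal is correct and matches the paper's approach: the paper does not give an explicit proof of Theorem~\ref{216} but simply defers to Burgos--Wang \cite{BW}, having already remarked that ``its construction is essentially the same as the non-equivariant case.'' Your reduction via the orthogonal eigendecomposition $\overline{E}=\bigoplus_l\overline{E}_l$ (valid because the $\mu_n$-action on $X$ is trivial and the metrics are invariant) and the resulting identity ${\rm ch}_g^k(\overline{E})=\sum_l\zeta_l\,{\rm ch}^k(\overline{E}_l)$ is exactly the way to make that deferral rigorous, and your verification that the transgression construction respects this splitting (because $\mathcal{O}(1)$ and the Fubini--Study metric carry trivial $\mu_n$-action) is the only genuinely new point beyond \cite{BW}.

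One small wording issue: when you write ``$d_\mathfrak{D}\circ{\rm ch}^0={\rm ch}^0\circ d'$'', the right-hand side does not parse, since there is no differential $d'$ on the category of hermitian bundles in degree zero. What you mean, and what your explanation makes clear, is simply that $d_\mathfrak{D}\big({\rm ch}^0({\rm tr}_k(\overline{E}))\big)=0$ because Chern--Weil forms are $d_\mathcal{D}$-closed, so the total differential on the target reduces to the cubical $d''$ on the image of ${\rm ch}^0\circ{\rm tr}_*$; this is indeed the content of \cite[Proposition~3.9]{BW}.
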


\subsection{Higher K-theory and equivariant regulator maps}
\label{sec:2.3}
Let $X$ be a $\mu_n$-equivariant arithmetic scheme over an arithmetic ring $(D,\Sigma,F_\infty)$, on which the $\mu_n$-action is not necessarily trivial. In this subsection, we give the definition of the equivariant higher arithmetic K-groups $\widehat{K}_m(X,\mu_n)$. As the non-equivariant case, we shall define them to be the homotopy groups of the homotopy fibres of regulator maps.

Firstly, denote $X_\R:=(X(\C),F_\infty)$ the real variety associated to $X$ where $X(\C)$ is the analytification of $X_\C$ which is an algebraic complex manifold and $F_\infty$ is the antiholomorphic involution of $X(\C)$ induced by the conjugate-linear involution $F_\infty$ over $(D,\Sigma,F_\infty)$. For any sheaf of complex vector spaces $V$ with a real structure over $X_\R$, we denote by $\sigma$ the involution given by $$\omega\mapsto \overline{F_\infty^*(\omega)}.$$ The same notation goes to any subsheaf of abelian groups of $V$, which is invariant under complex conjugation. According to \cite[Section 5.3]{BKK}, there is a complex of (fine) sheaves whose groups of global sections equal $D^*(X(\C),p)$. Notice that the involution $\sigma$ respects the real structure and the Hodge filtration, thus $\sigma$ induces an involution over the complex $D^*(X(\C),p)$ and hence over $\widetilde{D}^*(X(\C),p)$, which is still denoted by $\sigma$. Write $D^*(X_\R,p):=D^*(X(\C),p)^\sigma$ for the subcomplex of $D^*(X(\C),p)$ consisting of the fixed elements under $\sigma$, we define the real Deligne-Beilinson cohomology of $X$ as
$$H_D^*(X,p):=H^*(D^*(X_\R,p)).$$ Since we are working with complex of vector spaces, the functor $(\cdot)^\sigma$ is exact. Then $H_D^*(X,p)$ is actually $H_D^*(X(\C),p)^\sigma$. An analogous definition can be given for the Deligne homology of $X$ whence $X$ is proper over $D$.

Now, recall that $\mathcal{P}(X,\mu_n)$ denotes the exact category of $\mu_n$-equivariant vector bundles on $X$. Then $\mathcal{P}(X,\mu_n)$ can be viewed as a Waldhausen category with cofibration sequences are short exact sequences of vector bundles and weak equivalences are isomorphisms of vector bundles. The $S$-construction of $\mathcal{P}(X,\mu_n)$ is a simplicial category with cofibrations and weak equivalences
\begin{align*}
  {\rm w}S.\mathcal{P}(X,\mu_n): \triangle^{\rm op} & \longrightarrow ({\rm cat}) \\
  [n] & \longmapsto {\rm w}S_n\mathcal{P}(X,\mu_n).
\end{align*}
Here ${\rm w}S_n\mathcal{P}(X,\mu_n)$ is a category whose objects are functors
\begin{align*}
  A: {\rm Ar}[n] & \longrightarrow \mathcal{P}(X,\mu_n) \\
  (i,j) & \longmapsto A_{i,j}
\end{align*}
satisfying the property that for every $0\leq i\leq n$, $A_{i,i}=0$ and that for every triple $i\leq j\leq k$,
$$0\to A_{i,j}\to A_{i,k}\to A_{j,k}\to 0$$ is a short exact sequence. Notice that the set ${\rm Ob}({\rm w}S_n\mathcal{P}(X,\mu_n))$ can be identified with the set of injections $$A_{0,1}\rightarrowtail A_{0,2}\rightarrowtail \cdots\rightarrowtail A_{0,n}$$ with quotients $A_{i,j}\simeq A_{0,j}/{A_{0,i}}$ for each $i<j$. The morphisms in the category ${\rm w}S_n\mathcal{P}(X,\mu_n)$ are natural transformations between functors $A\to A'$ such that $A_{i,j}\to A'_{i,j}$ is isomorphism for every pair $i\leq j$.

\begin{defn}\label{217}
The equivariant Waldhausen K-theory space of $X$ is the geometric realization of the bisimplicial set $N.{\rm w}S.\mathcal{P}(X,\mu_n)$ with base point $0\in S_0$, where $N.$ stands for the nerve of a small category.
\end{defn}

The rule $[n]\mapsto {\rm Ob}({\rm w}S_n\mathcal{P}(X,\mu_n))$ gives us another simplicial set, which we shall denote by $S(X,\mu_n)$. We have the following results.

\begin{thm}\label{218}
There are homotopy equivalence maps between geometric realizations of simplicial sets with base points
$$\mid S(X,\mu_n)\mid\simeq \mid N.{\rm w}S.\mathcal{P}(X,\mu_n)\mid\simeq BQ\mathcal{P}(X,\mu_n).$$ Therefore for $m\geq0$, we have
$$\pi_{m+1}(\mid S(X,\mu_n)\mid,0)\cong K_m(X,\mu_n).$$
\end{thm}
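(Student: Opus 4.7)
The final identity $\pi_{m+1}(\mid S(X,\mu_n)\mid,0) \cong K_m(X,\mu_n)$ is an immediate consequence of the two homotopy equivalences combined with Quillen's original definition $K_m(X,\mu_n) = \pi_{m+1}(BQ\mathcal{P}(X,\mu_n))$, so the substantive task reduces to establishing the two displayed weak equivalences. The plan is to treat them in reverse order.

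The second equivalence $\mid N.{\rm w}S.\mathcal{P}(X,\mu_n)\mid \simeq BQ\mathcal{P}(X,\mu_n)$ is Waldhausen's comparison theorem for exact categories. Regarding $\mathcal{P}(X,\mu_n)$ as a Waldhausen category with admissible monomorphisms as cofibrations and isomorphisms as weak equivalences, Waldhausen produced a natural zig-zag of bisimplicial maps connecting $N.{\rm w}S.(-)$ to the nerve of Quillen's $Q$-construction and verified termwise that each map in the zig-zag is a weak equivalence via Quillen's Theorem A. This argument uses nothing beyond the abstract exact category structure, hence applies directly to the equivariant exact category $\mathcal{P}(X,\mu_n)$ and yields the second equivalence.

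For the first equivalence $\mid S(X,\mu_n)\mid \simeq \mid N.{\rm w}S.\mathcal{P}(X,\mu_n)\mid$, I would interpret $S(X,\mu_n)$ as the bisimplicial set obtained from $N.{\rm w}S.\mathcal{P}(X,\mu_n)$ by restricting to the 0-skeleton in the nerve direction, i.e.\ taking $\mathrm{Ob}({\rm w}S_n\mathcal{P}(X,\mu_n))$ as a constant simplicial set in the $k$-direction, and show that the resulting bisimplicial inclusion induces a weak equivalence on realizations. Following the strategy used by Burgos-Wang in the non-equivariant case, the key observation is that every isomorphism $A\xrightarrow{\sim} A'$ in ${\rm w}S_n\mathcal{P}(X,\mu_n)$, viewed as a $1$-simplex in the nerve direction, can be encoded as the boundary of a suitable object in ${\rm w}S_{n+1}\mathcal{P}(X,\mu_n)$ via the $S_\bullet$-structure. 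In this way the homotopies in the nerve direction are absorbed into the $S_\bullet$-direction, and a simplicial homotopy argument (or equivalently, a comparison of the spectral sequences associated to the two filtrations of the bisimplicial set) shows that the two realizations coincide up to weak equivalence.

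The main obstacle is the first equivalence: one must check that Burgos-Wang's absorption argument transfers faithfully to the equivariant setting. This is essentially formal, however, since every step of their proof is phrased purely in terms of the ambient exact category structure, and the $\mu_n$-action is built into the objects and morphisms of $\mathcal{P}(X,\mu_n)$ rather than interacting with the simplicial-homotopical machinery. Once this transfer is made, the chain of equivalences composes to give the homotopy equivalences asserted in the theorem, and the final isomorphism on homotopy groups follows by applying $\pi_{m+1}$.
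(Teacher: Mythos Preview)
Your proposal is essentially correct and follows the same overall approach as the paper, which proves the theorem purely by citation: the second equivalence is attributed to Waldhausen \cite[Section 1.9]{Wal}, exactly as you say, and the first equivalence is attributed to Schechtman \cite[(1.2)]{Sch}. The only discrepancy is one of attribution rather than mathematics: you credit the first equivalence to an argument of Burgos--Wang, whereas the paper points to Schechtman as the source. Your sketched mechanism---absorbing isomorphisms in the nerve direction into the $S_\bullet$-direction, so that the inclusion of objects into $N.{\rm w}S_\bullet$ realizes to a weak equivalence---is indeed the idea behind Schechtman's argument, and it is purely exact-categorical, so the transfer to $\mathcal{P}(X,\mu_n)$ is automatic as you note.
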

\begin{proof}
The existence of the second homotopy equivalence is a classic result given by Waldhausen in \cite[Section 1.9]{Wal}. A proof of the existence of the first homotopy equivalence can be found in \cite[(1.2)]{Sch}.
\end{proof}

Let us denote by $\widehat{\mathcal{P}}(X)$ the exact category of $\mu_n$-equivariant vector bundles on $X$ with smooth at infinity metrics (not necessarily $\mu_n$-invariant), and by $\widehat{S}(X)$ the simplicial set associated to the Waldhausen $S$-construction of $\widehat{\mathcal{P}}(X)$ as above. Observe that the forgetful functor (forget about the metrics) $\pi: \widehat{\mathcal{P}}(X)\to \mathcal{P}(X,\mu_n)$ induces an equivalence of categories, so we have $$\mid \widehat{S}(X)\mid\simeq \mid S(X,\mu_n)\mid$$ and
$$K_m(X,\mu_n)\cong \pi_{m+1}(\mid \widehat{S}(X)\mid,0)$$ for any $m\geq 0$. Similarly, if the $\mu_n$-action on $X$ is trivial, we denote by $\widehat{S}(X,\mu_n)$ the Waldhausen $S$-construction of $\widehat{\mathcal{P}}(X,\mu_n)$, which is the exact category of $\mu_n$-equivariant vector bundles on $X$ with $\mu_n$-invariant smooth at infinity metrics, then by Proposition~\ref{209} we have homotopy equivalence $\mid \widehat{S}(X,\mu_n)\mid\simeq \mid S(X,\mu_n)\mid$. In particular, there is a homotopy equivalence map $$\vartheta:\quad \mid \widehat{S}(X)\mid\simeq \mid \widehat{S}(X,\mu_n)\mid.$$
This map $\vartheta$ is realized by making the $\mu_n$-average of a smooth at infinity metric.

Next, to give the simplicial description of the equivariant regulator maps, let us associated, to each element in $S_k\widehat{\mathcal{P}}(X,\mu_n)$, a hermitian $(k-1)$-cube, the same as Burgos and Wang did for the non-equivariant case. Firstly, for $k=1$, we write
$${\rm Cub}(A_{0,1})=A_{0,1}.$$ Suppose that the map ${\rm Cub}$ is defined for all $l<k$. Let $A\in S_k\widehat{\mathcal{P}}(X,\mu_n)$, then ${\rm Cub}A$ is the $(k-1)$-cube with
\begin{align*}
\partial_1^{-1}{\rm Cub}A & =s_{k-2}^1\cdots s_1^1(A_{0,1}), \\
\partial_1^{0}{\rm Cub}A & ={\rm Cub}(\partial_1A), \\
\partial_1^{1}{\rm Cub}A & ={\rm Cub}(\partial_0A).
\end{align*}

Let $\Z\widehat{S}_*(X,\mu_n)$ be the simplicial abelian group generated by simplicial set $\widehat{S}(X,\mu_n)$. That is, $\Z\widehat{S}_k(X,\mu_n)$ is the free abelian group generated by $S_k\widehat{\mathcal{P}}(X,\mu_n)$. We shall write $\mathcal{N}(\Z\widehat{S}_*(X,\mu_n))$ for the Moore complex associated to $\Z\widehat{S}_*(X,\mu_n)$, this is a homological complex with differential $d: \Z\widehat{S}_k(X,\mu_n)\to \Z\widehat{S}_{k-1}(X,\mu_n)$ given by
$$d=\sum_{i=0}^k(-1)^i\partial_i.$$

\begin{prop}\label{219}
The map ${\rm Cub}$ defined above extends by linearity to a morphism of homological complexes
$${\rm Cub}:\quad \mathcal{N}(\Z\widehat{S}_*(X,\mu_n))\to \widetilde{\Z}C_*(X,\mu_n)[-1].$$
\end{prop}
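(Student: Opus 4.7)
The plan is to imitate Burgos--Wang's non-equivariant proof in \cite{BW} verbatim, relying on the fact that every ingredient in the definition of ${\rm Cub}$ (the simplicial degeneracies and faces, the canonical short exact sequences $0\to A_{0,1}\to A_{0,k}\to A_{1,k}\to 0$, and the cubical face operators $\partial_i^j$) is natural with respect to morphisms in the exact category $\widehat{\mathcal{P}}(X,\mu_n)$ and therefore preserves $\mu_n$-structures automatically. I will organize the argument into three verifications.

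First, I would confirm by induction on $k$ that the recursive recipe consistently determines an exact $(k-1)$-cube ${\rm Cub}A\in\widehat{\mathcal{P}}(X,\mu_n)$ from each $A\in S_k\widehat{\mathcal{P}}(X,\mu_n)$. The inductive step amounts to checking that the three prescribed faces $\partial_1^{-1}{\rm Cub}A$, $\partial_1^{0}{\rm Cub}A$ and $\partial_1^{1}{\rm Cub}A$ agree on their mutual $(k-3)$-dimensional sub-faces, which follows from the standard simplicial identities $\partial_i\partial_j=\partial_{j-1}\partial_i$ (for $i<j$) and from $\partial_is_1^1=s_1^1\partial_{i-1}$; exactness along the first edge is supplied by the short exact sequence $0\to A_{0,1}\to A_{0,k}\to A_{1,k}\to 0$ built into the $S$-construction.

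Second, after extending ${\rm Cub}$ by $\Z$-linearity, I would verify the identity $d\,{\rm Cub}A\equiv {\rm Cub}\,dA\pmod{D_{k-1}}$. The $i=1$ summand of the cubical differential contributes $-\partial_1^{-1}{\rm Cub}A+\partial_1^{0}{\rm Cub}A-\partial_1^{1}{\rm Cub}A$; the first term is degenerate by construction, while the other two are precisely ${\rm Cub}(\partial_1 A)$ and ${\rm Cub}(\partial_0 A)$. For $i\geq 2$, the inductive hypothesis together with a sign-chase that takes into account the shift by $-1$ and the commutation of the simplicial faces $\partial_j$ for $j\geq 2$ with $\partial_0$, $\partial_1$ converts $\partial_i^j{\rm Cub}A$ into ${\rm Cub}(\partial_{i+1}A)$-type terms with the correct signs, ultimately matching the Moore differential $\sum_{j=0}^k(-1)^j\partial_j$. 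This combinatorial bookkeeping is the main obstacle; it is purely formal, but as in \cite{BW} it requires care to keep track of the index shift between cubical positions $1,\ldots,k-1$ and simplicial positions $0,\ldots,k$.

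Finally, I would verify that ${\rm Cub}$ sends degenerate simplices into the cubical degeneracy subgroup $D_*$, so that the map descends to the Moore complex $\mathcal{N}(\Z\widehat{S}_*(X,\mu_n))$. A simplicial degeneracy $s_j$ inserts an identity arrow into the chain $A_{0,1}\rightarrowtail\cdots\rightarrowtail A_{0,k}$, which forces an edge of ${\rm Cub}(s_j A)$ to be a split short exact sequence of the form $0\to E\xrightarrow{{\rm id}} E\to 0\to 0$; by the recursive definition, the resulting cube is visibly in the image of a cubical degeneracy $S_i^{j'}$, so ${\rm Cub}(s_j A)\in D_k$. Combining this with the previous step yields the claimed morphism of homological complexes into $\widetilde{\Z}C_*(X,\mu_n)[-1]$.
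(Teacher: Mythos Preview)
Your proposal is correct and takes essentially the same approach as the paper: the paper's proof is a one-line citation to \cite[Corollary 4.8]{BW}, and you have simply sketched the content of that reference while noting (correctly) that the argument is purely formal in any exact category and therefore carries over verbatim to $\widehat{\mathcal{P}}(X,\mu_n)$. One minor point: the paper uses $\mathcal{N}$ for the unnormalized Moore complex (with differential $\sum_{i}(-1)^i\partial_i$), so your third step about degenerate simplices is not strictly needed for the map to be well-defined on the source, though it does no harm.
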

\begin{proof}
This is \cite[Corollary 4.8]{BW}.
\end{proof}

\begin{rem}\label{220}
The Moore complex $\mathcal{N}(\cdot)$ associated to a simplicial abelian group actually gives an equivalence between the category of simplicial abelian groups $s{\rm Ab}$ and the category of homological complexes which are zero in negative degrees $\mathcal{I}_{+}$. Its inverse is given by the Dold-Puppe functor $\mathcal{K}: \mathcal{I}_+\to s{\rm Ab}$. Hence, we have a simplicial map $${\rm Cub}:\quad \Z\widehat{S}_*(X,\mu_n)\to \mathcal{K}(\widetilde{\Z}C_*(X,\mu_n)[-1]).$$
\end{rem}

Let $X$ be a $\mu_n$-equivariant arithmetic scheme over an arithmetic ring $(D,\Sigma,F_\infty)$, and let $X_{\mu_n}$ be the fixed point subscheme. It's a general theorem which states that if $X$ is regular, then $X_{\mu_n}$ is also regular. (cf. for instance \cite[Proposition 3.1]{Th2}) So, the generic fibre of $X_{\mu_n}$ is also smooth and hence $X_{\mu_n}$ is a $\mu_n$-equivariant arithmetic scheme.

\begin{defn}\label{221}
Let $X$ and $X_{\mu_n}$ be as above, and let $i: X_{\mu_n}\hookrightarrow X$ be the natural inclusion. Denote by $\widetilde{D}^{2p-*}(X_{\mu_n},p)$ the homological complex associated to the complex $\tau_{\leq0}\big(\widetilde{D}^*(X_{\mu_n},p)[2p]\big)$ which is the canonical truncation of $\widetilde{D}^*(X_{\mu_n},p)[2p]$ at degree $0$. We define two simplicial maps $$\xymatrix{\varphi: \widehat{S}(X) \ar[r]^-{i^*} & \widehat{S}(X_{\mu_n})}$$ and $$\xymatrix{\phi: \widehat{S}(X_{\mu_n},\mu_n) \ar[r]^-{\rm Hu} & \Z\widehat{S}(X_{\mu_n},\mu_n) \ar[d]^-{\rm Cub} & \\ & \mathcal{K}(\widetilde{\Z}C_*(X_{\mu_n},\mu_n)[-1]) \ar[r]^-{\mathcal{K}({\rm ch}_g)} & \mathcal{K}(\bigoplus_{p\geq0}\widetilde{D}^{2p-*}(X_{\mu_n},p)[-1]_{R_n}),}$$
where $i^*$ is the pull-back map induced by $i$ and ${\rm Hu}$ is the Hurewicz map. The continuous maps induced by $\varphi$ and $\phi$ between corresponding geometric realizations of simplicial sets are still denoted by $\varphi$ and $\phi$.
\end{defn}

\begin{defn}\label{222}
Recall that $\vartheta:\quad \mid \widehat{S}(X_{\mu_n})\mid\simeq \mid \widehat{S}(X_{\mu_n},\mu_n)\mid$ denotes the homotopy equivalence which is the geometric realization of a simplicial map
$$\vartheta: \widehat{S}(X_{\mu_n}) \to \widehat{S}(X_{\mu_n},\mu_n),$$
by making the $\mu_n$-average of a hermitian metric. We define $$\widetilde{{\rm ch}_g}:\quad \widehat{S}(X) \to  \mathcal{K}(\bigoplus_{p\geq0}\widetilde{D}^{2p-*}(X_{\mu_n},p)[-1]_{R_n})$$ as the composition $\phi\circ\vartheta\circ\varphi$.
\end{defn}

\begin{rem}\label{newformalism}
It is clear that the simplicial map $\widetilde{{\rm ch}_g}$ can be also described as the composition of simplicial maps
$$\xymatrix{\widehat{S}(X) \ar[r]^-{\rm Hu} & \Z\widehat{S}(X) \ar[d]^-{\rm Cub} & \\ & \mathcal{K}(\widetilde{\Z}C_*(X)[-1]) \ar[r]^-{\vartheta\circ i^*} & \mathcal{K}(\widetilde{\Z}C_*(X_{\mu_n},\mu_n)[-1])}$$
followed by
$$\xymatrix{\mathcal{K}(\widetilde{\Z}C_*(X_{\mu_n},\mu_n)[-1]) \ar[r]^-{\mathcal{K}({\rm ch}_g)} & \mathcal{K}(\bigoplus_{p\geq0}\widetilde{D}^{2p-*}(X_{\mu_n},p)[-1]_{R_n}).}$$
\end{rem}

Next, let us give the definitions of higher equivariant arithmetic K-groups and equivariant regulator maps.

\begin{defn}\label{223}
Let $X$ be a $\mu_n$-equivariant arithmetic scheme over an arithmetic ring $(D,\Sigma,F_\infty)$, and let $X_{\mu_n}$ be the fixed point subscheme. The higher equivariant arithmetic K-groups of $X$ are defined as $$\widehat{K}_m(X,\mu_n):=\pi_{m+1}\big(\text{homotopy fibre of }\mid\widetilde{{\rm ch}_g}\mid\big)\quad\text{for}\quad m\geq1,$$ and the equivariant regulator maps $${\rm ch}_g:\quad K_m(X,\mu_n)\to \bigoplus_{p\geq0}H_D^{2p-m}(X_{\mu_n},\R(p))_{R_n}$$ are defined as the homomorphisms induced by $\widetilde{{\rm ch}_g}$ on the level of homotopy groups.
\end{defn}

Whence $n=1$, $\vartheta$ is the identity map, then Definition~\ref{223} recovers the results given in \cite{BW} for the non-equivariant case, namely
$${\rm ch}_g:\quad K_m(X,\mu_1)\to \bigoplus_{p\geq0}H_D^{2p-m}(X,\R(p))$$
equals the Beilinson's regulator map.

Finally, by the definition of the higher equivariant arithmetic K-groups, we have the long exact sequence
$$\cdots \to \widehat{K}_m(X,\mu_n) \to K_m(X,\mu_n) \to \bigoplus_{p\geq0}H_{\mathcal{D}}^{2p-m}(X_{\mu_n},\R(p))_{R_n} \to \widehat{K}_{m-1}(X,\mu_n) \to \cdots$$ ending with
$$\xymatrix{\cdots\to K_1(X,\mu_n) \ar[r] & \bigoplus_{p\geq0}H_{\mathcal{D}}^{2p-1}(X_{\mu_n},\R(p))_{R_n}
\ar[d] & \\
& \pi_1\big(\text{homotopy fibre of }\widetilde{{\rm ch}_g}\big) \ar[r] & K_0(X,\mu_n)\to \bigoplus_{p\geq0}H_{\mathcal{D}}^{2p}(X_{\mu_n},\R(p))_{R_n}.}$$

\begin{rem}\label{224}
If $X$ is proper, any $\mu_n$-invariant metric on vector bundle over $X$ is automatically smooth at infinity, then we may replace $\widehat{S}(X)$ by $\widehat{S}(X,\mu_n)$ in the definition of the higher equivariant arithmetic K-groups. Also in this case, we may use the complex $D^*(X_{\mu_n},p)$ instead of $\widetilde{D}^*(X_{\mu_n},p)$. Furthermore, replacing the canonical truncation $\tau_{\leq0}\big(D^*(X_{\mu_n},p)[2p]\big)$ by the b\^{e}te truncation $\sigma_{<0}\big(D^*(X_{\mu_n},p)[2p]\big)$, we may extend Definition~\ref{223} to $m=0$ and we have corresponding long exact sequence ending with
$$\cdots \to K_1(X,\mu_n) \to \bigoplus_{p\geq0}\big(D^{2p-1}(X_{\mu_n},\R(p))/{({\rm Im}\partial+{\rm Im}\overline{\partial})}\big)_{R_n}
\to \widehat{K}_0(X,\mu_n) \to K_0(X,\mu_n)\to 0.$$
Here the K-group $\widehat{K}_0(X,\mu_n)$ agrees (up to a normalization) with the equivariant arithmetic K$_0$-group defined in \cite{KR} in the spirit of Gillet-Soul\'{e}, and $\pi_1\big(\text{homotopy fibre of }\widetilde{{\rm ch}_g}\big)$ is a subgroup of $\widehat{K}_0(X,\mu_n)$. An easy diagram chasing argument shows that the cokernel $\widehat{K}_0(X,\mu_n)/{\pi_1}$ fits in the following exact sequence
$$0\to \bigoplus_{p\geq0}\big(D^{2p-1}(X_{\mu_n},\R(p))/{{\rm Ker}(\partial\overline{\partial})}\big)_{R_n}\to \widehat{K}_0(X,\mu_n)/{\pi_1}\to K_0(X,\mu_n)/{{\rm Ker}({\rm ch_g^0})}\to 0,$$
where ${\rm ch_g^0}: K_0(X,\mu_n)\to \bigoplus_{p\geq0}H_{\mathcal{D}}^{2p}(X_{\mu_n},\R(p))_{R_n}$ is the equivariant Chern character map.
\end{rem}

The last result we would like to mention is the following.

\begin{prop}\label{torsion}
Let $s({\rm ch_g})$ denote the simple complex associated to the chain morphism
$$\xymatrix{ {\rm ch_g}: & \widetilde{\Z}C_*(X) \ar[r]^-{\vartheta\circ i^*} & \widetilde{\Z}C_*(X_{\mu_n},\mu_n) \ar[r]^-{{\rm ch_g}} &
\bigoplus_{p\geq0}\widetilde{D}^{2p-*}(X_{\mu_n},p)_{R_n}.}$$
Then, for any $m\geq1$, there is an isomorphism $$\widehat{K}_m(X,\mu_n)_\Q\cong H_m(s({\rm ch_g}),\Q).$$
\end{prop}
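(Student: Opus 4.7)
The plan is to identify $\widehat{K}_m(X,\mu_n)$ via Dold--Kan correspondence with the homology of an algebraic cone, and then use a rational Hurewicz isomorphism to reduce the computation to the chain morphism ${\rm ch}_g$. Using the factorization in Remark~\ref{newformalism}, I write $\widetilde{{\rm ch}_g}$ as the composition
$$\widehat{S}(X)\xrightarrow{\rm Hu}\Z\widehat{S}(X)\xrightarrow{g}\mathcal{K}\Bigl(\bigoplus_{p\geq0}\widetilde{D}^{2p-*}(X_{\mu_n},p)[-1]_{R_n}\Bigr),$$
where $g$ is a morphism of simplicial abelian groups. Let $F$ and $F'$ denote the homotopy fibres of $|\widetilde{{\rm ch}_g}|$ and $|g|$, respectively. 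By Dold--Kan, $g$ corresponds to the chain map
$$g_*\colon\mathcal{N}(\Z\widehat{S}_*(X))\xrightarrow{\rm Cub}\widetilde{\Z}C_*(X)[-1]\xrightarrow{\vartheta\circ i^*}\widetilde{\Z}C_*(X_{\mu_n},\mu_n)[-1]\xrightarrow{{\rm ch}_g[-1]}\bigoplus_{p\geq0}\widetilde{D}^{2p-*}(X_{\mu_n},p)[-1]_{R_n},$$
and the Moore complex of $F'$ is identified with the shifted mapping cone $s(g_*)$, so $\pi_{m+1}(F')\cong H_{m+1}(s(g_*))$.

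The critical input is that ${\rm Cub}$ becomes a quasi-isomorphism after tensoring with $\Q$; equivalently, the composition ${\rm Cub}\circ{\rm Hu}$ induces an isomorphism $K_m(X,\mu_n)_\Q\xrightarrow{\sim}H_m(\widetilde{\Z}C_*(X))_\Q$ for $m\geq1$. This rests on two facts: Burgos--Wang's construction identifies $\widetilde{\Z}C_*(X)[-1]$ as a chain-level model for the $K$-theory of $\widehat{\mathcal{P}}(X)$, and the rational Hurewicz homomorphism $\pi_n\otimes\Q\to H_n(-;\Q)$ is an isomorphism for every $n\geq1$ on the infinite loop space $|\widehat{S}(X)|$, since any connected infinite loop space splits rationally as a product of Eilenberg--MacLane spaces.

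Granting these, Cub induces a rational quasi-isomorphism $s(g_*)\simeq_\Q s({\rm ch}_g[-1])=s({\rm ch}_g)[-1]$, so $\pi_{m+1}(F')_\Q\cong H_{m+1}(s({\rm ch}_g)[-1])_\Q\cong H_m(s({\rm ch}_g))_\Q$. On the other hand, applying the five lemma to the morphism of fibration long exact sequences induced by $\widehat{S}(X)\to\Z\widehat{S}(X)$, the rational Hurewicz yields $\pi_{m+1}(F)_\Q\cong\pi_{m+1}(F')_\Q$ for $m\geq1$. Combining these identifications gives the desired isomorphism $\widehat{K}_m(X,\mu_n)_\Q\cong H_m(s({\rm ch}_g))_\Q$. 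The main obstacle will be establishing the rational quasi-isomorphism property of Cub, equivalently the rational Hurewicz for the Waldhausen $K$-theory space: this is the key algebraic input from \cite{BW} relating the cubical construction to higher $K$-theory, and verifying it in the equivariant setting requires checking that none of the Hurewicz splitting is obstructed by the $\mu_n$-structure.
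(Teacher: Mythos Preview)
Your argument is correct and follows the same route the paper has in mind: the paper's own proof is simply the sentence ``This follows from McCarthy's result \cite{MC} together with \cite[Lemma 2.1]{Fe},'' and what you have written is an unpacking of exactly those two inputs. The comparison of the homotopy fibre with the simple complex, via Dold--Kan plus the five lemma, is the content of \cite[Lemma 2.1]{Fe}, and the rational quasi-isomorphism property of ${\rm Cub}$ (equivalently, the rational Hurewicz for the Waldhausen space) is McCarthy's theorem \cite{MC}, not a result of Burgos--Wang as you suggest; Burgos--Wang use McCarthy's result rather than prove it. Your closing worry about the equivariant setting is unnecessary: McCarthy's theorem is stated for an arbitrary exact category, and $\widehat{\mathcal{P}}(X)$ is one, so no additional verification is needed for the $\mu_n$-structure.
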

\begin{proof}
This follows from McCarthy's result \cite{MC} together with \cite[Lemma 2.1]{Fe}.
\end{proof}

\section{Arithmetic K-theory with supports}
\label{sec:3}
Let $X$ be a scheme, and let $Y$ be a closed subscheme of $X$ with complement $j: U\hookrightarrow X$. The algebraic K-groups of $X$ with supports in $Y$ are defined as $$K_{Y,m}(X):=\pi_{m+1}\big(\text{homotopy fibre of }j^*: BQ(\mathcal{P}(X))\to BQ(\mathcal{P}(U))\big),$$ so that one may have a long exact sequence
$$\cdots\to K_{Y,m}(X)\to K_m(X)\to K_m(U)\to K_{Y,m-1}(X)\to\cdots.$$
In this section, we shall develop the arithmetic and the equivariant analogue of the above algebraic K-groups with supports. Since we work with simplicial sets and their geometric realizations, we shall focus on the category $\mathcal{V}$ of compactly generated spaces with nondegenerate base points. The following theorem will be frequently used in the rest of this paper.

\begin{thm}\label{301}
Let $f: X\to Y$ be a map in $\mathcal{V}$, we denote by $Ff$ the homotopy fibre associated to $f$. Recall that $Ff:=X\times_Y PY$, the fibre product with respect to $f$ and the end-point evaluation $PY\to Y$ from the based path space of $Y$ to $Y$.

(1). Consider the following diagram, in which the right square commutes up to homotopy and the rows are canonical fibre sequence
$$\xymatrix{ \Omega Y \ar[r]^-{\iota} \ar[d]_-{\Omega\alpha} & Ff \ar[r]^-{\pi} \ar@{.>}[d]^-{\gamma} & X \ar[r]^-f \ar[d]^-\beta & Y \ar[d]^-\alpha \\
\Omega Y' \ar[r]_-{\iota} & Ff' \ar[r]^-{\pi} & X' \ar[r]_-{f'} & Y'.}$$
There exists a map $\gamma$ such that the middle square commutes and the left square commutes up to homotopy. If the right square commutes strictly, then there is a unique $\gamma=F(\alpha,\beta)$ such that both left squares commute;

(2). Let $f$ be homotopic to $h\circ g$ in the following braid of fibre sequences and let $j''=\iota(g)\circ \Omega\pi(h)$. There are maps $j$ and $j'$ such that the diagram commutes up to homotopy, and there is a homotopy equivalence $\xi: Fj\to Fg$ such that $j'\circ \xi\simeq \pi(j)$ and $j''\simeq\xi\circ\iota(j)$
$$\xymatrix{ \Omega Fh \ar[rr]^-{j''} \ar[dr]_-{\Omega\pi(h)} && Fg \ar[rr]^-{\pi(g)} \ar@{.>}[dr]^-{j'} && Z \ar[rr]^-f \ar[dr]^-g && X \\
& \Omega Y \ar[ur]^-{\iota(g)} \ar[dr]_-{\Omega h} && Ff \ar[ur]^-{\pi(f)} \ar@{.>}[dr]^-{j} && Y \ar[ur]_-{h} & \\
&& \Omega X \ar[ur]^-{\iota(f)} \ar[rr]_-{\iota{h}} && Fh \ar[ur]_-{\pi(h)} && .}$$
If $f=h\circ g$, then there are unique maps $j$ and $j'$ such that the above diagram commutes, and then $\xi$ can be so chosen that $j'\circ \xi=\pi(j)$ and $j''=\xi\circ\iota(j)$.

(3). If the maps $\beta: X\to X'$ and $\alpha: Y\to Y'$ in (1) are both weak equivalences, then the induced map $\gamma: Ff\to Ff'$ is also a weak equivalence. If furthermore, $X,X',Y$ and $Y'$ are all CW-complexes, then $\gamma: Ff\to Ff'$ is a homotopy equivalence.
\end{thm}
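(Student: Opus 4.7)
The plan is to verify this purely topological statement about homotopy fibres in $\mathcal{V}$ using the standard path-space model $Ff = X \times_Y PY$ and elementary Puppe-sequence machinery; no arithmetic input is needed.

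For part (1), I would construct $\gamma$ by an explicit formula. A point of $Ff$ is a pair $(x,\omega)$ with $\omega \in PY$ a path from the basepoint to $f(x)$. Choose a homotopy $H\colon X\times I\to Y'$ from $\alpha\circ f$ to $f'\circ\beta$ witnessing commutativity of the right square, and set
$$\gamma(x,\omega) = \bigl(\beta(x),\,(\alpha\circ\omega)\ast H(x,-)\bigr),$$
where $\ast$ is path concatenation. The middle square commutes on the nose since $\pi\circ\gamma = \beta\circ\pi$, and the left square commutes up to the homotopy induced by $H$ applied to loops. When the right square commutes strictly one takes $H$ constant, and the resulting $\gamma = F(\alpha,\beta)$ is the unique functorial choice making both left squares commute strictly, as one checks directly from the formula.

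For part (2), this is the classical octahedral property of homotopy fibres. Given $f = h\circ g$, the map $j\colon Ff\to Fh$ sends $(z,\omega)\in Z\times_X PX$ to $(g(z),\omega)\in Y\times_X PX$, and applying part (1) to the evident commutative squares yields $j'$ as well as uniqueness when $f = h\circ g$ strictly. To produce $\xi\colon Fj\to Fg$, I would unfold the iterated pullback
$$Fj = Ff\times_{Fh} P(Fh) = \bigl(Z\times_X PX\bigr)\times_{Y\times_X PX} P\bigl(Y\times_X PX\bigr),$$
and observe that the path factors into $PX$ retract onto constant paths, leaving a space naturally homotopy equivalent to $Z\times_Y PY = Fg$; a homotopy inverse is given by sending $(z,\tau)\in Fg$ to $(z,c_*,c_{(g(z),c_*)},\tau)$ where $c$ denotes constant paths. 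The required identities $j'\circ\xi\simeq\pi(j)$ and $j''\simeq\xi\circ\iota(j)$ are then a bookkeeping check involving the projection and loop-coordinate of this pullback description, and become strict equalities in the strict case.

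For part (3), apply the long exact sequence of homotopy groups to the ladder produced in (1), mapping the fibre sequence of $f$ to that of $f'$. Since $\beta$ and $\alpha$ are weak equivalences, so are $\Omega\alpha$ and $\alpha$ on every homotopy group, and the Five Lemma forces $\gamma$ to induce isomorphisms on $\pi_*$. Under the CW-hypothesis, $Ff$ and $Ff'$ have the homotopy type of CW-complexes (being homotopy pullbacks of CW-complexes along fibrations replacing $f,f'$), so Whitehead's theorem upgrades $\gamma$ to a genuine homotopy equivalence. The main obstacle is not conceptual, as each statement is classical, but rather the careful tracking of basepoint conventions and of the reparameterizations of concatenated paths that witness the homotopy-commutativities in part (2); once one fixes a clean model for iterated homotopy fibres, the verifications are mechanical.
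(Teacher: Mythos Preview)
Your proposal is correct; each of the three parts is handled by a sound and standard argument. The main difference from the paper is one of style rather than substance: the paper does not prove these statements at all but simply cites them as the fill-in lemma and the Verdier lemma from May--Ponto's \emph{More Concise Algebraic Topology} (Lemmas 1.2.3 and 1.2.7), and for part (3) invokes the Five Lemma together with Milnor's theorem that the homotopy fibre of a map between CW-complexes has CW homotopy type. You instead sketch the explicit path-space constructions (the formula for $\gamma$ via concatenation with the homotopy $H$, and the retraction of the iterated pullback $Fj$ onto $Fg$). Your approach is more self-contained and makes the homotopies concrete, which is pedagogically useful; the paper's approach is appropriate because Theorem 3.1 is purely background topology, and citing a standard reference keeps the focus on the arithmetic content. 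For part (3) your justification of the CW homotopy type of $Ff$ via homotopy pullbacks is essentially equivalent to the paper's appeal to Milnor.
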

\begin{proof}
The first part (1) is classic and is summarized in \cite[Lemma 1.2.3]{MP}, which is called the fill-in lemma. The second part (2) is
\cite[Lemma 1.2.7]{MP}, which is called the Verdier lemma. The existences of $j$ and $j'$ are guaranteed by (1). These two lemmas actually describe the homotopy category ${\rm Ho}\mathcal{V}$ as a ``pretriangulated category". The first result in (3) follows from the Five-lemma, and observe that a theorem of Milnor (cf. \cite{Mi}) implies that the homotopy fibre of an arbitrary map between CW-complexes has the same homotopy type of a CW-complex, the second result in (3) then follows from the Whitehead theorem.
\end{proof}

Let $X$ and $Y$ be two $\mu_n$-equivariant arithmetic schemes, according to Theorem~\ref{301}. (1), to define a family of group homomorphisms
$$\widehat{K}_*(X,\mu_n)\to \widehat{K}_*(Y,\mu_n),$$
it is sufficient to give a square
$$\xymatrix{ \mid \widehat{S}(X)\mid \ar[r]^-{f} \ar[d]_-{\widetilde{{\rm ch}_g}} & \mid \widehat{S}(Y)\mid \ar[d]^-{\widetilde{{\rm ch}_g}} \\
\mid\mathcal{K}(\bigoplus_{p\geq0}\widetilde{D}^{2p-*}(X_{\mu_n},p)[-1]_{R_n})\mid \ar[r]^-{f^H} &  \mid\mathcal{K}(\bigoplus_{p\geq0}\widetilde{D}^{2p-*}(Y_{\mu_n},p)[-1]_{R_n})\mid,}$$
which commutes up to homotopy. Such a square will be denoted by $\square^X_Y$, and notice that the defined homomorphisms between the arithmetic K-groups depend on the choice of the homotopy in $\square^X_Y$.

For instance, let $i: Y\hookrightarrow X$ be a $\mu_n$-equivariant closed immersion of $\mu_n$-equivariant arithmetic schemes, denote by $j: U\to X$ the open immersion with respect to the complement $X\setminus i(Y)$. Consider the following diagram
$$\xymatrix{Fj^* \ar[r] \ar@{.>}[d]_-{\gamma_{c}} & \mid \widehat{S}(X)\mid \ar[r]^-{j^*} \ar[d]_-{\widetilde{{\rm ch}_g}} & \mid \widehat{S}(U)\mid \ar[d]^-{\widetilde{{\rm ch}_g}} \\
Fj_H^* \ar[r] & \mid \mathcal{K}(\bigoplus_{p\geq0}\widetilde{D}^{2p-*}(X_{\mu_n},p)[-1]_{R_n})\mid \ar[r]^-{j_H^*} & \mid \mathcal{K}(\bigoplus_{p\geq0}\widetilde{D}^{2p-*}(U_{\mu_n},p)[-1]_{R_n})\mid,}$$
in which the continuous maps $j^*$ and $j_H^*$ are induced by the pullbacks along $j$. Since the right square in the above diagram is commutative strictly, by Theorem~\ref{301} (1), there exists a unique continuous map $\gamma_{c}: Fj^*\to Fj_H^*$ which extends to the whole homotopy fibre sequences.

\begin{defn}\label{302}
Let notations and assumptions be as above. The equivariant higher arithmetic K-groups of $X$ with supports in $Y$ are defined as $$\widehat{K}_{Y,m}(X,\mu_n):=\pi_{m+1}\big(\text{homotopy fibre of }\gamma_{c}\big)\quad\text{for}\quad m\geq1,$$
and the equivariant regulator maps with supports,
$${\rm ch}_{Y,g}:\quad K_{Y,m}(X,\mu_n)\to \bigoplus_{p\geq0}H_{Y_{\mu_n},D}^{2p-m}(X_{\mu_n},\R(p))_{R_n}$$ are defined as the homomorphisms induced by $\gamma_{c}$ on the level of homotopy groups. Moreover, we have a family of group homomorphisms $$i_*: \widehat{K}_{Y,m}(X,\mu_n)\to \widehat{K}_m(X,\mu_n)$$
and a family of pull-back morphisms
$$j^*: \widehat{K}_m(X,\mu_n)\to \widehat{K}_m(U,\mu_n)$$
for all $m\geq1$. These pull-back morphisms are determined by the continuous map $\gamma_{j}$ between the homotopy fibres of $\widetilde{{\rm ch}_g}$.
\end{defn}

Our main result in this section is the following.

\begin{thm}\label{304}
The group homomorphism $i_*: \widehat{K}_{Y,m}(X,\mu_n)\to \widehat{K}_m(X,\mu_n)$ factors through an isomorphism $$\widehat{K}_{Y,m}(X,\mu_n)\cong \pi_{m+1}(F\gamma_{j})\quad\text{for any }m\geq 1.$$ Hence one has a long exact sequence
$$\xymatrix{\cdots\ar[r] & \widehat{K}_{Y,m}(X,\mu_n)\ar[r]^-{i_*} & \widehat{K}_m(X,\mu_n)\ar[r]^-{j^*} & \widehat{K}_m(U,\mu_n)\ar[r] & \widehat{K}_{Y,m-1}(X,\mu_n)\ar[r] & \cdots}$$
ending with
$$\xymatrix{\cdots\ar[r] & \widehat{K}_{Y,1}(X,\mu_n)\ar[r]^-{i_*} & \widehat{K}_1(X,\mu_n)\ar[r]^-{j^*} & \widehat{K}_1(U,\mu_n).}$$
\end{thm}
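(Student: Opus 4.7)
The plan is to show that both $\widehat{K}_{Y,m}(X,\mu_n)$, defined as $\pi_{m+1}(F\gamma_{c})$, and $\pi_{m+1}(F\gamma_{j})$ arise as homotopy groups of the \emph{total} homotopy fibre of the strictly commutative square
$$\xymatrix{|\widehat{S}(X)| \ar[r]^-{j^*} \ar[d]_-{\widetilde{{\rm ch}_g}} & |\widehat{S}(U)| \ar[d]^-{\widetilde{{\rm ch}_g}} \\ H_X \ar[r]^-{j_H^*} & H_U,}$$
where $H_X$ abbreviates $|\mathcal{K}(\bigoplus_{p\geq0}\widetilde{D}^{2p-*}(X_{\mu_n},p)[-1]_{R_n})|$ and $H_U$ is defined analogously. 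Taking horizontal fibres first yields the map $\gamma_c : Fj^* \to Fj_H^*$, and taking vertical fibres first yields $\gamma_j : F\widetilde{{\rm ch}_g}^X \to F\widetilde{{\rm ch}_g}^U$. Thus the core task is to produce a natural homotopy equivalence $F\gamma_c \simeq F\gamma_j$; the long exact sequence will then be immediate from the fibre sequence associated to $\gamma_j$.

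To construct this equivalence, I would form the honest homotopy pullback $P$ of $H_X \xrightarrow{j_H^*} H_U \xleftarrow{\widetilde{{\rm ch}_g}} |\widehat{S}(U)|$. Strict commutativity of the square produces a canonical map $\Phi : |\widehat{S}(X)| \to P$, and the two projections $\pi_H : P \to H_X$ and $\pi_U : P \to |\widehat{S}(U)|$ are fibrations whose homotopy fibres are canonically $F\widetilde{{\rm ch}_g}^U$ and $Fj_H^*$ respectively, with $\pi_U \circ \Phi = j^*$ and $\pi_H \circ \Phi = \widetilde{{\rm ch}_g}$ on the nose. Applying the Verdier lemma (Theorem~\ref{301}(2)) to the factorization $|\widehat{S}(X)| \xrightarrow{\Phi} P \xrightarrow{\pi_U} |\widehat{S}(U)|$ produces a fibre sequence $F\Phi \to Fj^* \to Fj_H^*$ whose second map coincides with $\gamma_c$ by the uniqueness assertion in Theorem~\ref{301}(1); hence $F\Phi \simeq F\gamma_c$. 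A symmetric application to $|\widehat{S}(X)| \xrightarrow{\Phi} P \xrightarrow{\pi_H} H_X$ identifies $F\Phi$ with $F\gamma_j$, and composing the two equivalences yields $F\gamma_c \simeq F\gamma_j$.

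Once this equivalence is in hand, the fibre sequence $F\gamma_j \to F\widetilde{{\rm ch}_g}^X \to F\widetilde{{\rm ch}_g}^U$ (together with its canonical leftward extension) gives, on passing to $\pi_{m+1}$, the long exact sequence
$$\cdots\to \widehat{K}_{Y,m}(X,\mu_n)\to \widehat{K}_m(X,\mu_n)\to \widehat{K}_m(U,\mu_n)\to \widehat{K}_{Y,m-1}(X,\mu_n)\to\cdots$$
ending at $\widehat{K}_1(U,\mu_n)$, since $\widehat{K}_m(\cdot,\mu_n)$ is defined only for $m\geq 1$. The maps $i_*$ and $j^*$ of Definition~\ref{302} are designed to appear as the induced maps in this sequence.

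The main obstacle I foresee is bookkeeping: one must verify that the $\gamma_c$ produced inside the Verdier fibre sequence is, up to a canonical homotopy, the same as the $\gamma_c$ declared by the fill-in lemma in Definition~\ref{302}, and likewise that the induced connecting map $\pi_{m+1}(F\gamma_c) \to \pi_{m+1}(F\widetilde{{\rm ch}_g}^X)$ coming from the equivalence $F\gamma_c \simeq F\gamma_j$ really agrees with the $i_*$ of Definition~\ref{302}. Because the original square commutes strictly, the uniqueness clauses in Theorem~\ref{301}(1) and (2) pin down all the fill-in maps canonically, so this verification reduces to a diagram chase in the homotopy category rather than requiring genuinely new homotopies; nonetheless, keeping all identifications coherent is the delicate part of the argument.
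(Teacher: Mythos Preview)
Your proposal is correct and follows essentially the same route as the paper. The paper packages the argument as a general lemma (Lemma~\ref{305}) about an arbitrary square commuting up to a fixed homotopy: one replaces the right and bottom maps by fibrations, forms the strict pullback $A\times_B E$ (your homotopy pullback $P$), lifts to obtain a map $\theta:D\to A\times_B E$ (your $\Phi$), and then applies the Verdier lemma (Theorem~\ref{301}(2)) to the two triangles $(\theta,p_A,f)$ and $(\theta,p_E,i)$ to produce weak equivalences $F\theta\to F\alpha$ and $F\theta\to F\beta$. The only substantive difference is that the paper's lemma is stated for squares commuting merely up to homotopy (so $\theta$ is built via the covering homotopy property rather than directly), whereas you exploit the strict commutativity available in this particular application; this makes your version marginally cleaner but less general. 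Your closing remark about bookkeeping is exactly the point the paper addresses when it checks that the fibre-sequence maps produced by Verdier agree with the originally declared $\alpha$ and $\beta$ via the identity $g\circ H = H_\square$.
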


Theorem~\ref{304} is actually a special case of the following result.

\begin{lem}\label{305}
Consider the following square $\square$ in the category $\mathcal{V}$
$$\xymatrix{& Ff \ar@{.>}[r]^-{\alpha} \ar[d] & Fg \ar[d] \\
Fi \ar[r] \ar@{.>}[d]_-{\beta} & D \ar[r]^-{i} \ar[d]_-{f} & E \ar[d]^-{g} \\
Fj \ar[r] & A \ar[r]^-{j} & B.}$$
Suppose that $\square$ is commutative up to homotopy and fix a homotopy $H_\square$, we denote by $\alpha$ and $\beta$ the corresponding maps induced on the homotopy fibres. Then the induced group homomorphism $\pi_m(F\beta)\to \pi_m(Ff)$ factors through an isomorphism $\pi_m(F\beta)\cong \pi_{m}(F\alpha)$ for any $m\geq0$.
\end{lem}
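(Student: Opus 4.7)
The plan is to introduce the homotopy pullback $P := E \times_{B}^{h} A$ of $g:E\to B$ along $j:A\to B$ as an intermediary, and to show that both $F\alpha$ and $F\beta$ are weakly equivalent to the total homotopy fibre of the square, realized as $F\delta$ for a canonical map $\delta:D\to P$. Concretely, I would model $P$ as $\{(e,a,\gamma)\mid \gamma\in PB,\; \gamma(0)=g(e),\; \gamma(1)=j(a)\}$ with projections $p_{E}:P\to E$ and $p_{A}:P\to A$. The fixed homotopy $H_{\square}$ between $g\circ i$ and $j\circ f$ determines a canonical map $\delta:D\to P$ sending $d$ to $(i(d),f(d),H_{\square}(d,\cdot))$, so that $p_{E}\circ\delta=i$ and $p_{A}\circ\delta=f$ hold strictly. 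A standard check gives canonical homotopy equivalences $F(p_{E})\simeq Fj$ and $F(p_{A})\simeq Fg$ between the fibres of the two projections and the fibres of $j$ and $g$ respectively.

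The next step is to apply the Verdier lemma (Theorem~\ref{301}\,(2)) twice. Applied to the factorization $i=p_{E}\circ\delta$, it produces a homotopy fibre sequence
$$F\delta\longrightarrow Fi\longrightarrow F(p_{E})\simeq Fj,$$
and tracing the constructions built from the same $H_{\square}$ shows that the second map agrees with $\beta$ up to homotopy, so $F\delta\simeq F\beta$. Applied symmetrically to $f=p_{A}\circ\delta$, Verdier produces a fibre sequence $F\delta\to Ff\to F(p_{A})\simeq Fg$ whose second map agrees with $\alpha$, yielding $F\delta\simeq F\alpha$.

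Combining both equivalences gives $F\beta\simeq F\delta\simeq F\alpha$ and hence isomorphisms $\pi_{m}(F\beta)\cong\pi_{m}(F\alpha)$ for every $m\geq 0$. To obtain the factorization claimed by the statement, I would observe that the natural map $F\beta\to Ff$, which sends a point of $F\beta$ to its image in $D$ together with the path in $A$ supplied by the underlying $Fj$-path data, coincides, under the identifications above, with the canonical map $F\delta\to Ff$ coming from the second Verdier sequence. Consequently $\pi_{m}(F\beta)\to\pi_{m}(Ff)$ factors as
$$\pi_{m}(F\beta)\xrightarrow{\cong}\pi_{m}(F\delta)\xrightarrow{\cong}\pi_{m}(F\alpha)\longrightarrow\pi_{m}(Ff),$$
which is the desired statement.

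The hardest part will be verifying that the maps produced by the Verdier lemma are genuinely the maps $\alpha$ and $\beta$ of the statement, rather than merely maps equivalent to them through some ambient zigzag of homotopies. Since both the statement's $\alpha,\beta$ and the Verdier output depend on the chosen homotopy $H_{\square}$, the key technical point is to ensure that the same $H_{\square}$ is used consistently when building $\delta$, $\alpha$ and $\beta$; once this is arranged, the identifications are forced by unfolding the explicit models of homotopy pullback and homotopy fibre. The remainder is a routine diagram chase.
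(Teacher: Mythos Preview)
Your proposal is correct and follows essentially the same route as the paper: both introduce a pullback of $g$ along $j$, build a map $\delta$ (the paper's $\theta$) from $D$ into it using $H_\square$, apply the Verdier lemma (Theorem~\ref{301}(2)) to the two resulting factorizations of $i$ and $f$, and then check that the Verdier output maps agree with $\alpha$ and $\beta$. The only difference is packaging: the paper first reduces to the case where $g,j$ are fibrations and uses the strict pullback $A\times_B E$ (lifting $H_\square$ along the fibration $g$ to produce $\theta$), then handles the general case by mapping-path factorization, whereas you go straight to the homotopy pullback $P=E\times_B^h A$, which absorbs that reduction and makes both identities $p_E\circ\delta=i$ and $p_A\circ\delta=f$ strict from the outset. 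Your version is slightly cleaner for exactly the reason you flag at the end: the consistency of $H_\square$ across all constructions is transparent when it is built directly into $\delta$.
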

\begin{proof}
Firstly assume that $j$ and $g$ are both fibrations. We construct the following diagram
$$\xymatrix{
  D \ar@/_/[ddr]_{f} \ar@/^/[drr]^{i}
    \ar@{.>}[dr]^-{\theta}                   \\
   & A\times_B E \ar[d]^{p_A} \ar[r]_{p_E}
                      & E \ar[d]_{g}    \\
   & A \ar[r]^{j}     & B               .}$$
Write $I$ for the interval $[0,1]$, then the homotopy $H_\square$ can be explained as a map $I\times D\to B$ such that $H_\square(0)=g\circ i$ and $H_\square(1)=j\circ f$. Since $g$ is assumed to be a fibration, we may lift $H_\square$ to get a homotopy $H: I\times D\to E$ such that $H(0)=i$ and $g\circ H(1)=H_\square(1)=j\circ f$. Then $f$ and $H(1)$ determine a map $$\theta: D\to A\times_B E$$ such that the triangle $(\theta,p_A,f)$ commutes and the triangle $(\theta,p_E,i)$ commutes up to homotopy.

Now, $p_A, p_E$ are also fibrations and we have natural homotopy equivalences $Fp_A\simeq Fg$ and $Fp_E\simeq Fj$. Moreover, applying Theorem~\ref{301}. (2) to the triangles $(\theta,p_A,f)$ and $(\theta,p_E,i)$, we get homotopy fibre sequences
$$F\theta\to Ff\to Fp_A$$ and $$F\theta \to Fi\to Fp_E.$$ Since $g\circ H=H_\square$, we have that $\beta: Fi\to Fj$ is equal to the composition $Fi\to Fp_E\simeq Fj$. This means we have the following commutative diagram
$$\xymatrix{F\theta \ar[r] \ar@{.>}[d] & Fi \ar[r] \ar[d]_-{\rm id} & Fp_E \ar[d]^-{\wr} \\
F\beta \ar[r] & Fi \ar[r]^-{\beta} & Fj,}$$
and hence a weak equivalence $\mu: F\theta\to F\beta$. On the other hand, the homotopy $H\circ\pi(f): I\times Ff\to E$ can be lifted to a homotopy $I\times Ff\to Fg$ which connects $\alpha: Ff\to Fg$ and the composition $Ff\to Fp_A\simeq Fg$, again because $g\circ H=H_\square$. Therefore, we obtain a diagram commuting up to homotopy
$$\xymatrix{F\theta \ar[r] \ar@{.>}[d] & Ff \ar[r] \ar[d]_-{\rm id} & Fp_A \ar[d]^-{\wr} \\
F\alpha \ar[r] & Ff \ar[r]^-{\alpha} & Fg,}$$ which induces a weak equivalence $\nu: F\theta\to F\alpha$. One can readily check that the two weak equivalences $\mu$ and $\nu$ provide the claimed isomorphism $\pi_m(F\beta)\cong \pi_{m}(F\alpha)$ for any $m\geq0$.

For general situation, we factor $j,g$ as a homotopy equivalence followed by a fibration and we consider the following diagram
$$\xymatrix{ D \ar[rr]^-{i} \ar[drr]^-{u} \ar[ddr]_-{v} \ar[dd]_-{f} \ar@{.>}[dr]^-{\theta} && E \ar[d]^-{\wr}_-{w} \\ & Nf\times_B N_g \ar[r] \ar[d] & N_g \ar[d]^-{s} \\ A \ar[r]^-{\sim}_-{z} & N_j \ar[r]_-{t} & B,}$$ in which $s,t$ are both fibrations and $g=s\circ w, j=t\circ z$.
Using again Theorem~\ref{301} to the commutative triangles $(i,w,u)$, $(f,z,v)$, $(w,s,g)$ and $(z,t,j)$, we get natural weak equivalences $\xymatrix{Fi \ar[r]^-{\sim_w} & Fu}$, $\xymatrix{Ff \ar[r]^-{\sim_w} & Fv}$, $\xymatrix{Fg \ar[r]^-{\sim_w} & Fs}$ and $\xymatrix{Fj \ar[r]^-{\sim_w} & Ft}$. These weak equivalences induce commutative diagrams
$$\xymatrix{F\beta \ar@{.>}[r]^-{\mu'} \ar[d]_-{\pi(\beta)} & F\beta' \ar[d]^-{\pi(\beta')} \\ Fi \ar[r]^-{\sim_w} \ar[d]_-{\beta} & Fu \ar[d]^-{\beta'} \\ Fj \ar[r]^-{\sim_w} & Ft}\quad \text{and}\quad \xymatrix{F\alpha \ar@{.>}[r]^-{\nu'} \ar[d]_-{\pi(\alpha)} & F\alpha' \ar[d]^-{\pi(\alpha')} \\ Ff \ar[r]^-{\sim_w} \ar[d]_-{\alpha} & Fv \ar[d]^-{\alpha'} \\ Fg \ar[r]^-{\sim_w} & Fs,}$$
where $\mu',\nu'$ are also weak equivalences by Theorem~\ref{301}. (3). Then the claimed isomorphism $\pi_m(F\beta)\cong \pi_{m}(F\alpha)$ follows from the weak equivalences $\mu',\nu'$ and the above arguments for fibration case, so we are done.
\end{proof}

\section{Arithmetic purity theorem}
\label{sec:4}
\subsection{The statement}
\label{sec:4.1}
As we mentioned in the introduction, for regular scheme $X$ and its regular closed subscheme $Y$ with complement $U:=X\setminus Y$, there exists a long exact sequence of algebraic K-groups $$\cdots\to K_m(Y)\to K_m(X)\to K_m(U)\to K_{m-1}(Y)\to\cdots$$
ending with $$\cdots\to K_0(Y)\to K_0(X)\to K_0(U)\to 0.$$
This long exact sequence follows from the localization theorem of Quillen. And according to the definition of algebraic K-groups with supports, it can be also deduced from a natural long exact sequence
$$\cdots\to K_{Y,m}(X)\to K_m(X)\to K_m(U)\to K_{Y,m-1}(X)\to\cdots$$
together with a family of group isomorphisms
$$K_m(Y)\cong K_{Y,m}(X)\quad\text{for }m\geq0,$$
which is called the algebraic purity theorem. In this section, we shall develop the arithmetic and the equivariant analogue of this purity theorem for $\mu_n$-equivariant arithmetic schemes. Firstly in this subsection, we formulate the statement and give the proof of Theorem~\ref{thc} as a consequence. For technical reason, we assume that $X$ is proper.

Let $i: Y\hookrightarrow X$ be a $\mu_n$-equivariant closed immersion of regular and proper $\mu_n$-equivariant arithmetic schemes over an arithmetic ring $(D,\Sigma,F_\infty)$. Denote by $U$ the complement $X\setminus i(Y)$ and by $j: U\hookrightarrow X$ the corresponding open immersion. Recall that $R_n=\R$ if $n=1$ and $R_n=\C$ otherwise.

Now, consider the commutative square $\square^X_U$
$$\xymatrix{\mid \widehat{S}(X)\mid \ar[r]^-{j^*} \ar[d]_-{\widetilde{{\rm ch}_g}} & \mid \widehat{S}(U)\mid \ar[d]^-{\widetilde{{\rm ch}_g}} \\
\mid \mathcal{K}(\bigoplus_{p\geq0}\widetilde{D}^{2p-*}(X_{\mu_n},p)[-1]_{R_n})\mid \ar[r]^-{j_H^*} & \mid \mathcal{K}(\bigoplus_{p\geq0}\widetilde{D}^{2p-*}(U_{\mu_n},p)[-1]_{R_n})\mid.}$$
We still denote by $\gamma_c: Fj^*\to Fj_H^*$ the induced map defining the arithmetic K-groups of $X$ with supports in $Y$. Moreover, we fix a $\mu_n$-invariant K\"{a}hler metric for the holomorphic tangent bundle $TX(\C)$ and endow $TY(\C)$, $TX_{\mu_n}(\C)$, $TY_{\mu_n}(\C)$ and the normal bundle $N_{X/Y}$ with the induced metrics. Write ${\rm Td}_g(\overline{N}_{X/Y})$ for the equivariant Todd form associated to the equivariant hermitian vector bundle $\overline{N}_{X/Y}$ (cf. \cite[Section 3]{KR}). By definition, for an equivariant hermitian vector bundle $\overline{E}$ on $Y$,
$${\rm Td}_g(\overline{E})={\rm Td}(\overline{E}_g)\prod_{\zeta\neq 1}{\rm det}\big(\frac{1}{1-\zeta^{-1}e^{K_\zeta}}\big)$$ where $\overline{E}_g$ is the $0$-degree part of $\overline{E}\mid_{Y_{\mu_n}}$ and $K_\zeta$ stands for the curvature form of $\overline{E}_\zeta$. Similar to the Todd form in the non-equivariant case, we have
$${\rm Td}_g(\overline{E})=\frac{c_{{\rm rk}E_g}(\overline{E}_g)}{{\rm ch}_g(\sum_{j=0}^{{\rm rk}E}(-1)^j\wedge^j\overline{E}^\vee)}.$$

Let us define a product of ${\rm Td}^{-1}_g(\overline{N}_{X/Y})$ with elements in $\widetilde{D}^*(Y_{\mu_n},p)$. For any $\alpha\in \mathfrak{D}^r(Y_{\mu_n}\times (\mathbb{P}^1)^k,p)$, $\alpha\bullet {\rm Td}^{-1}_g(\overline{N}_{X/Y}):=\alpha\bullet {\rm Td}^{-1}_g(p_0^*\overline{N}_{X/Y})$ where $p_0$ is the natural projection from $Y\times (\mathbb{P}^1)^k$ to $Y$. Because $d_i^jp_0^*=p_0^*$ and $s_ip_0^*=p_0^*$, this product induces a well-defined morphism of complexes
$$\xymatrix{\bigoplus_{p\geq0}\widetilde{D}^{2p-*}(Y_{\mu_n},p)[-1]_{R_n} \ar[rr]^-{\bullet {\rm Td}^{-1}_g(\overline{N}_{X/Y})} && \bigoplus_{p\geq0}\widetilde{D}^{2p-*}(Y_{\mu_n},p)[-1]_{R_n}}$$ and hence an isomorphism
$${\rm td}_g:\quad \mathcal{K}(\bigoplus_{p\geq0}\widetilde{D}^{2p-*}(Y_{\mu_n},p)[-1]_{R_n})\to  \mathcal{K}(\bigoplus_{p\geq0}\widetilde{D}^{2p-*}(Y_{\mu_n},p)[-1]_{R_n}).$$ We shall write $c$ for the composition
$$\xymatrix{\widehat{S}(Y) \ar[r]^-{\widetilde{{\rm ch}_g}} &  \mathcal{K}(\bigoplus_{p\geq0}\widetilde{D}^{2p-*}(Y_{\mu_n},p)[-1]_{R_n}) \ar[r]^-{{\rm td}_g} &  \mathcal{K}(\bigoplus_{p\geq0}\widetilde{D}^{2p-*}(Y_{\mu_n},p)[-1]_{R_n})}$$
and for its geometric realization. Notice that the map ${\rm td}_g$ can be defined for any (smooth at infinity) metric over $N_{X/Y}$ by making its $\mu_n$-average and the maps $c$ deduced from different choices of the metric over $N_{X/Y}$ are actually homotopic.

Next, we construct a homotopy equivalence $i_*: \mid \widehat{S}(Y)\mid\simeq Fj^*$. Let $\mathcal{M}_Y(X,\mu_n)$ denote the category of $\mu_n$-equivariant coherent sheaves on $X$ with supports in $Y$. Then, by Quillen's localization theorem,
$$\xymatrix{BQ(\mathcal{M}_Y(X,\mu_n)) \ar[r] & BQ(\mathcal{M}(X,\mu_n)) \ar[r]^-{j^*} & BQ(\mathcal{M}(U,\mu_n))}$$
is a homotopy fibre sequence. Concerning Theorem~\ref{218} and notice that $X,Y$ are regular, we have the following commutative (up to homotopy) square
$$\xymatrix{BQ(\mathcal{M}_Y(X,\mu_n)) \ar[r] \ar@{.>}[d] & BQ(\mathcal{M}(X,\mu_n)) \ar[r] \ar[d]^-{\wr} & BQ(\mathcal{M}(U,\mu_n)) \ar[d]^-{\wr} \\
Fj^* \ar[r] & \mid \widehat{S}(X)\mid \ar[r]^-{j^*} & \mid \widehat{S}(U)\mid,}$$
which induces a weak equivalence and in fact a homotopy equivalence $BQ(\mathcal{M}_Y(X,\mu_n))\simeq Fj^*$ by Theorem~\ref{301}. (3). Moreover, the theorem \textit{D\'{e}vissage} of Quillen implies that the exact functor $$i_*: \mathcal{M}(Y,\mu_n)\to \mathcal{M}_Y(X,\mu_n)$$ induces a homotopy equivalence $BQ(\mathcal{M}(Y,\mu_n))\simeq BQ(\mathcal{M}_Y(X,\mu_n))$. We then define the homotopy equivalence
$$\xymatrix{i_*:\quad\mid \widehat{S}(Y)\mid \ar[r]^-{\sim} & BQ(\mathcal{M}(Y,\mu_n))\simeq BQ(\mathcal{M}_Y(X,\mu_n)) \ar[r]^-{\sim} & Fj^*.}$$

On the other hand, to formulate the arithmetic purity theorem, we construct a homotopy equivalence $i_*^H: \mid \mathcal{K}(\bigoplus_{p\geq0}\widetilde{D}^{2p-*}(Y_{\mu_n},p)[-1]_{R_n})\mid\simeq Fj_H^*$. In the case that $Y_{\mu_n}=\emptyset$, this is trivial. For the case that $Y_{\mu_n}\neq\emptyset$, we use the fact that the real Deligne-Beilinson cohomology is a Gillet cohomology theory for smooth real varieties.

In \cite{Gi}, H. Gillet listed a series of axioms for cohomology-homology theories which include a Poincar\'{e} duality theory in the sense of Bloch-Ogus. Such axioms allow him to define Chern characters and obtain the most general form of the Riemann-Roch theorem for higher algebraic $K$-theory. Gillet's axioms contain a purity condition: if $i: Y\hookrightarrow X$ is a closed immersion of smooth real varieties of codimension $d$, then the following isomorphism (coming from the Poincar\'{e} Duality)
$$H_{\mathcal{G}}^k(Y,p)\cong H_{Y,\mathcal{G}}^{k+2d}(X,d+p)$$ is actually induced by a quasi-isomorphism $\xymatrix{i_{!}: \mathcal{G}^*(Y,p) \ar[r]^-{\sim} & Ri^{!}\mathcal{G}^*(X,d+p)[2(d+p)]}$, where $\mathcal{G}^*(\cdot,*)$ is a complex of sheaves of abelian groups providing the cohomology theory and $i^{!}$ is the functor ``sections with support in $Y$" (cf. \cite[Definition 1.2. (v),(vi),(vii)]{Gi}). Such a complex is called a Gillet complex.

As a well-known result, in \cite{Ja}, U. Jannsen has proved that the Deligne-Beilinson complex $A(p)_{D, Zar}$ which computes the Deligne-Beilinson cohomology satisfies all Gillet's axioms. In \cite{Bu1}, J. I. Burgos described a complex of vector space $D^*(X, p)$ (see Section 2.1 Corollary~\ref{203}) computing the real Deligne-Beilinson cohomology. After a sheafification by setting $\mathbf{D}^*(X, p):=D^*(X, p)$ (this presheaf is actually a sheaf), J. I. Burgos, J. Kramer and U. K\"{u}hn showed in \cite[Section 5]{BKK} that there exists a natural quasi-isomorphism $\mathbb{R}(p)_{D, Zar}\simeq \mathbf{D}^*(\cdot, p)$ so that $\mathbf{D}^*(\cdot, p)$ is also a Gillet complex. In particular, it satisfies Gillet's purity condition.

Notice that $\mathcal{K}(\bigoplus_{p\geq0}\mathbf{D}^{2p-*}(X_{\mu_n},p)[-1])$ defines a simplicial sheaf on $X_{\mu_n}$, and the usual hypercohomology of $\bigoplus_{p\geq0}\mathbf{D}^{2p-*}(X_{\mu_n},p)[-1]$ can be realized as the generalized sheaf cohomology of $\mathcal{K}(\bigoplus_{p\geq0}\mathbf{D}^{2p-*}(X_{\mu_n},p)[-1])$. In this framework, Gillet's purity condition represents an isomorphism
$$\xymatrix{i_*\mathcal{K}(\bigoplus_{p\geq0}\mathbf{D}^{2p-*}(Y_{\mu_n},p)[-1]) \ar[r]^-{\sim} & R\underline{\Gamma}_{Y_{\mu_n}}\big(\mathcal{K}(\bigoplus_{p\geq0}\mathbf{D}^{2(p+d)-*}(X_{\mu_n},p+d)[-1])\big)}$$ in the homotopy category of simplicial sheaves on $X_{\mu_n}$, where $d$ is the rank of the normal bundle $N_{X_{\mu_n}/Y_{\mu_n}}$ (cf. \cite[Section 3. (3.2)]{Gi}). 

This means, if we write ${j'}_H^*$ for the continuous map $$\mid \mathcal{K}(\bigoplus_{p\geq0}D^{2p-*}(X_{\mu_n},p)[-1]_{R_n})\mid \longrightarrow \mid \mathcal{K}(\bigoplus_{p\geq0}D^{2p-*}(U_{\mu_n},p)[-1]_{R_n})\mid,$$ there is a homotopy equivalence
$$i_*^H: \quad\mid \mathcal{K}(\bigoplus_{p\geq0}D^{2p-*}(Y_{\mu_n},p)[-1]_{R_n})\mid\simeq F{j'}_H^*.$$

Then the desired homotopy equivalence $i_*^H: \quad\mid \mathcal{K}(\bigoplus_{p\geq0}\widetilde{D}^{2p-*}(Y_{\mu_n},p)[-1]_{R_n})\mid\simeq Fj_H^*$ is the composition of a simplicial homotopy inverse of
$$\iota: \mathcal{K}(\bigoplus_{p\geq0}D^{2p-*}(Y_{\mu_n},p)[-1]_{R_n})\to \mathcal{K}(\bigoplus_{p\geq0}\widetilde{D}^{2p-*}(Y_{\mu_n},p)[-1]_{R_n}),$$
which is constructed by an explicit quasi-inverse of complexes $\varphi: \widetilde{D}^*(Y_{\mu_n},p)\to D^*(Y_{\mu_n},p)$ with the natural homotopy equivalence $F{j'}_H^*\simeq F{j}_H^*$.

We now formulate the arithmetic purity theorem whose proof will be given in next two subsections.

\begin{thm}\label{401}
Let $i: Y\hookrightarrow X$ be a $\mu_n$-equivariant closed immersion of regular and proper $\mu_n$-equivariant arithmetic schemes over an arithmetic ring $(D,\Sigma,F_\infty)$. Then the continuous maps $i_*, \gamma_c, c$ and $i_*^H$ constructed above form a square
$$\xymatrix{\mid \widehat{S}(Y)\mid \ar[r]^-{\sim}_-{i_*} \ar[d]_-{c} & Fj^* \ar[d]^-{\gamma_c} \\
\mid \mathcal{K}(\bigoplus_{p\geq0}\widetilde{D}^{2p-*}(Y_{\mu_n},p)[-1]_{R_n})\mid \ar[r]^-{\sim}_-{i_*^H} & Fj_H^*,}$$
which is commutative up to homotopy. Consequently, a fixed homotopy between $\gamma_c\circ i_*$ and $i_*^H\circ c$ gives a family of isomorphisms of abelian groups
$$i_*: \widehat{K}_m(Y,\mu_n)\cong \widehat{K}_{Y,m}(X,\mu_n)\quad\text{for any }m\geq1.$$
\end{thm}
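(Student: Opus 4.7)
The plan is to first establish that the square commutes up to homotopy and then deduce the isomorphism of groups by general nonsense. For the second step, once a homotopy between $\gamma_c\circ i_*$ and $i_*^H\circ c$ is fixed, Theorem~\ref{301}(1) supplies a map between the homotopy fibres of the two vertical arrows. Since both horizontal maps are homotopy equivalences by construction, Theorem~\ref{301}(3) upgrades this map to a homotopy equivalence. Because ${\rm td}_g$ is a simplicial isomorphism (multiplication by the invertible class ${\rm Td}_g^{-1}(\overline{N}_{X/Y})$), the homotopy fibre of $c$ identifies with that of $\widetilde{{\rm ch}_g}$ on $Y$. Passing to $\pi_{m+1}$ then yields the claimed isomorphism $\widehat{K}_m(Y,\mu_n)\cong \widehat{K}_{Y,m}(X,\mu_n)$ for every $m\geq 1$.

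The substantive part is the homotopy commutativity. The strategy is to exploit the existence, for each $\mu_n$-equivariant hermitian vector bundle $\overline{E}$ on $Y$, of a finite resolution of $i_*E$ by $\mu_n$-equivariant vector bundles on $X$ (which exists by the regularity of $X$), together with a preferred choice of hermitian metrics. A natural such resolution is obtained from the equivariant Koszul complex associated to $\overline{N}_{X/Y}$ and $\overline{E}$, whose metrics are determined by those on $\overline{E}$ and $\overline{N}_{X/Y}$. When restricted to $U$ this resolution becomes acyclic, so the resolution together with the chosen acyclic trivialization over $U$ determines an explicit lift of $i_*[\overline{E}]$ to the homotopy fibre $Fj^*$. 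Applying $\widetilde{{\rm ch}_g}$ on $X$ and recording this trivialization represents $\gamma_c\circ i_*(\overline{E})$, and the equivariant analogue of the Bismut-Lebeau immersion formula (extending the $0$-degree statement used in \cite{KR}) identifies the resulting class, modulo explicit $\partial\overline{\partial}$-exact secondary forms, with $i_*^H\circ c(\overline{E})$.

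The main obstacle is to promote this pointwise identity to a simplicial homotopy coherent on all exact cubes in $\widehat{\mathcal{P}}(Y,\mu_n)$. This requires performing the resolution construction naturally, compatibly with face and degeneracy operators, so as to produce a resolution of any equivariant cube on $Y$ by a cube on $X$ whose metrics behave functorially under the iterated transgression ${\rm tr}_k$ of Definition~\ref{215}. Once that is in hand, the equivariant Bismut-Lebeau formula applied to the family of Koszul-type resolutions parametrized by $(\mathbb{P}^1)^\cdot$ produces secondary equivariant Bott-Chern forms which assemble, via the Dold-Puppe functor $\mathcal{K}$, into the required homotopy $[0,1]\times |\widehat{S}(Y)|\to Fj_H^*$. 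Independence from the choice of resolution and from the choice of metric on $N_{X/Y}$ follows from the standard doubled-complex secondary-class argument, parallel to the verification of the well-definedness of ${\rm ch}_g$ itself.
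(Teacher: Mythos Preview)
Your deduction of the group isomorphisms from the homotopy commutativity of the square is fine and matches the paper. The problem is in how you propose to establish that commutativity.

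You write that ``a natural such resolution is obtained from the equivariant Koszul complex associated to $\overline{N}_{X/Y}$ and $\overline{E}$.'' For a general regular closed immersion $i:Y\hookrightarrow X$ there is no such thing. The Koszul resolution $\wedge^\bullet Q^\vee\otimes\pi^*E$ lives on the projective completion $P=\mathbb{P}(N_{X/Y}\oplus\mathcal{O}_Y)$ and resolves $i_{\infty *}E$ for the \emph{zero section} $i_\infty:Y\hookrightarrow P$; it does not produce a resolution of $i_*E$ on $X$. Regularity of $X$ does guarantee that \emph{some} finite locally free resolution of $i_*E$ exists, but not a canonical or functorial one, and this is exactly the obstacle you flag in your third paragraph and then assume away. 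Without a resolution that is strictly natural in $\overline{E}$ and compatible with the exact structure, you cannot build the required simplicial map $\widehat{S}(Y)\to\widehat{SE}(X)$, and the Bismut singular-current homotopy of Proposition~\ref{404} has nothing to act on.

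The paper's route around this is the deformation to the normal cone. One first proves the theorem in the special case $X=P=\mathbb{P}(N_{X/Y}\oplus\mathcal{O}_Y)$, where the Koszul resolution is available and functorial, so that the explicit chain homotopy $\mathbf{H}_k(\overline{F})=T_g\big(K(\overline{\mathcal{O}}_Y,\overline{N})\big)\bullet\pi_{\mu_n}^*{\rm ch}_g(i_Y^*\overline{F})$ can be written down (Proposition~\ref{404}). For the general case one forms the blow-up $W$ of $X\times\mathbb{A}^1_S$ along $Y\times\{0\}$, uses the Tor-independence of the deformation diagram together with Lemma~\ref{405} and $\mathbb{A}^1$-homotopy invariance of both K-theory and Deligne--Beilinson cohomology to transport the statement along the family: the identity $c=h$ in $Ho(\mathcal{V})$ holds for $i:Y\hookrightarrow X$ if and only if it holds for $i_\infty:Y\hookrightarrow P$. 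Your proposal is missing this reduction step entirely; without it the argument does not go through.
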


As a corollary, we combine the arithmetic purity theorem with Theorem~\ref{304} to obtain a proof of Theorem~\ref{thc}, so that one has a long exact sequence
$$\xymatrix{\cdots\ar[r] & \widehat{K}_{m}(Y,\mu_n)\ar[r]^-{i_*} & \widehat{K}_m(X,\mu_n)\ar[r]^-{j^*} & \widehat{K}_m(U,\mu_n)\ar[r] & \widehat{K}_{m-1}(Y,\mu_n)\ar[r] & \cdots}$$
ending with
$$\xymatrix{\cdots\ar[r] & \widehat{K}_{1}(Y,\mu_n)\ar[r]^-{i_*} & \widehat{K}_1(X,\mu_n)\ar[r]^-{j^*} & \widehat{K}_1(U,\mu_n).}$$

\subsection{The case of zero section embedding}
\label{sec:4.2}
As before, let $Y$ be a regular $\mu_n$-equivariant arithmetic scheme which is proper over an arithmetic ring $(D,\Sigma,F_\infty)$. Let $N$ be a $\mu_n$-equivariant vector bundle on $Y$, we denote by $P$ the projective completion of $E$ i.e. the projective space bundle $\mathbb{P}(N\oplus \mathcal{O}_Y)$. Write $\pi: \mathbb{P}(N\oplus \mathcal{O}_Y)\to Y$ for the canonical smooth projection, then the morphism
$\mathcal{O}_Y\to N\oplus \mathcal{O}_Y$ induces a canonical section $i_\infty: Y\hookrightarrow P$ which is called the zero section embedding. In this subsection, we shall prove Theorem~\ref{401} in the special situation where $X=P$ and $i=i_\infty$. The reason for us to focus on this zero section embedding is that the direct image $i_*E$ of every vector bundle $E$ on $Y$ admits a canonical resolution on $P$ which is called the Koszul resolution, so that to construct an explicit simplicial map from the K-theory space of $Y$ to the K-theory space of $P$ becomes possible.

In fact, on $P=\mathbb{P}(N\oplus \mathcal{O}_Y)$, there exists a tautological exact sequence
$$0\to \mathcal{O}(-1)\to \pi^*(N\oplus \mathcal{O}_Y)\to Q\to 0$$
where $Q$ is the tautological quotient bundle. This exact sequence and the inclusion $\mathcal{O}_P\to \pi^*(N\oplus \mathcal{O}_Y)$ induces a section $\sigma: \mathcal{O}_P\to Q$ which vanishes along the zero section $i(Y)$. By duality we get a morphism $Q^\vee\to \mathcal{O}_P$ and this morphism induces an exact sequence
$$0\to \wedge^nQ^\vee\to \cdots \to \wedge^2Q^\vee\to Q^\vee\to \mathcal{O}_P\to i_*\mathcal{O}_Y\to 0$$ where $n={\rm rank}(Q)$. Generally, by projection formula, any vector bundle $E$ on $Y$ admits a canonical resolution
$$0\to \wedge^nQ^\vee\otimes\pi^*E\to \cdots \to \wedge^2Q^\vee\otimes\pi^*E\to Q^\vee\otimes\pi^*E\to \pi^*E\to i_*E\to 0$$
on $P$. This is called the Koszul resolution and it will be denoted by $K(E,N)$.

Write again $j: U\hookrightarrow P$ for the open immersion with $U:=P\setminus i(Y)$, we want to prove that the following square constructed in last subsection
$$\xymatrix{\mid \widehat{S}(Y)\mid \ar[r]^-{\sim}_-{i_*} \ar[d]_-{c} & Fj^* \ar[d]^-{\gamma_c} \\
\mid \mathcal{K}(\bigoplus_{p\geq0}D^{2p-*}(Y_{\mu_n},p)[-1]_{R_n})\mid \ar[r]^-{\sim}_-{i_*^H} & Fj_H^*}$$
is commutative up to homotopy. It is equivalent to say that $c=(i_*^H)^{-1}\circ \gamma_c\circ i_*$ in the homotopy category $Ho(\mathcal{V})$. Here we use $D^{2p-*}(Y_{\mu_n},p)$ instead of $\widetilde{D}^{2p-*}(Y_{\mu_n},p)$ and therefore $c$ is deduced from the morphism of complexes ${\rm Td}^{-1}_g(\overline{N})\bullet {\rm ch}_g(\cdot)$.

Since $Y$ (and hence $P$) is supposed to be proper over $D$, we shall replace Deligne-Beilinson cohomology groups by Deligne homology groups according to the Poincar\'{e} duality. Firstly, let $d$ be the dimension of $Y(\C)$ and write $B_{Y(\C)}^*$ for the complex of real vector spaces ${'E}_{Y(\C)}^*[-2d](-d)$. Replacing the complex $E^*_{\log}$ in Section 2.1 Theorem~\ref{202} by $B^*$, we may construct a homological complex ${'D}^{2p-*}(Y,p)$ (similar notation to $D^{2p-*}(\cdot,p)$ used in Section 2.1 Corollary~\ref{203}). This complex computes the real Deligne homology groups of $Y$. Moreover, the morphism of complexes $[\cdot]: E_{Y(\C)}^*\to B_{Y(\C)}^*$ is a filtered quasi-isomorphism compatible with underlying real structures, then one gets a quasi-isomorphism
$$\xymatrix{D^{2p-*}(Y,p) \ar[r]^-{\sim} & {'D}^{2p-*}(Y,p)}$$ and hence a homotopy equivalence
$$\xymatrix{\mid \mathcal{K}(\bigoplus_{p\geq0}D^{2p-*}(Y,p)[-1])\mid \ar[r]^-{\sim} & \mid \mathcal{K}(\bigoplus_{p\geq0} {'D}^{2p-*}(Y,p)[-1])\mid.}$$ The same constructions and results go to $Y_{\mu_n}, P$ and $P_{\mu_n}$.

Notice that to any proper morphism $f: U\to V$ between complex algebraic manifolds of relative dimension $e$ we may associate a pushforward
$$f_{!}: {'E}_U^*\to {'E}_V^*[-2e](-e)$$ by setting $(f_{!}T)(\eta)=T(f^*\eta)$. Then, according to the construction of the Poincar\'{e} duality, the composition $$\mid \mathcal{K}(\bigoplus_{p\geq0}D^{2p-*}(Y_{\mu_n},p)[-1]_{R_n})\mid\simeq Fj_H^*\to \mid \mathcal{K}(\bigoplus_{p\geq0}D^{2p-*}(P_{\mu_n},p)[-1]_{R_n})\mid$$
which is still denoted by $i_*^H$, can be viewed as the geometric realization of a simplicial map coming from the morphism of complexes
$${i_{\mu_n}}_!: \bigoplus_{p\geq0}{'D}^{2p-*}(Y_{\mu_n},p)[-1]\to \bigoplus_{p\geq0}{'D}^{2p-*}(P_{\mu_n},p)[-1].$$

On the other hand, the morphism of complexes
$${\pi_{\mu_n}}_!: \bigoplus_{p\geq0}{'D}^{2p-*}(P_{\mu_n},p)[-1]\to \bigoplus_{p\geq0}{'D}^{2p-*}(Y_{\mu_n},p)[-1]$$
induces a continuous map
$$\pi_*^H: \quad \mid \mathcal{K}(\bigoplus_{p\geq0}{'D}^{2p-*}(P_{\mu_n},p)[-1]_{R_n})\mid\to \mid \mathcal{K}(\bigoplus_{p\geq0}{'D}^{2p-*}(Y_{\mu_n},p)[-1]_{R_n})\mid$$
such that $\pi_*^H\circ i_*^H={\rm Id}$ because $i_{\mu_n}$ is a section.

Therefore, to prove that $c=(i_*^H)^{-1}\circ \gamma_c\circ i_*$ in the homotopy category $Ho(\mathcal{V})$ is equivalent to show that the following square
$$\xymatrix{\mid \widehat{S}(Y)\mid \ar[r]^-{\sim}_-{i_*} \ar[d]_-{c} & Fj^* \ar[r] & \mid \widehat{S}(P)\mid \ar[d]^-{\widetilde{{\rm ch}_g}}\\
\mid \mathcal{K}(\bigoplus_{p\geq0}{'D}^{2p-*}(Y_{\mu_n},p)[-1]_{R_n})\mid \ar[rr]^-{i_*^H} && \mid \mathcal{K}(\bigoplus_{p\geq0}{'D}^{2p-*}(P_{\mu_n},p)[-1]_{R_n})\mid}$$
is commutative up to homotopy.

Furthermore, we simplify our discussion by using the K-theory of derived categories. Firstly, we denote by $\widehat{\mathcal{P}E}(P,\text{quasi-iso})$ the Waldhausen category of bounded (homological) complex of hermitian $\mu_n$-equivariant vector bundles on $P$, with that the weak equivalences are quasi-isomorphisms. And similarly, we denote by $\widehat{\mathcal{P}E}(P,\text{iso})$ the Waldhausen category of bounded (homological) complex of hermitian $\mu_n$-equivariant vector bundles on $P$, with that the weak equivalences are isomorphisms. Cofibrations in both Waldhausen categories are degree-wise admissible monomorphisms (cf. \cite[1.11.6]{TT}).Then there is an exact functor $\widehat{\mathcal{P}}(P)\to \widehat{\mathcal{P}E}(P,\text{quasi-iso})$, which sends a hermitian $\mu_n$-equivariant vector bundle $\overline{F}$ on $P$ to the complex which is $\overline{F}$ in degree $0$ and $0$ in other degrees. It is clear that this exact functor factors as $$\widehat{\mathcal{P}}(P)\to \widehat{\mathcal{P}E}(P,\text{iso})\to\widehat{\mathcal{P}E}(P,\text{quasi-iso}).$$

As before, we construct a simplicial set $[n]\mapsto {\rm Ob}\big({\rm w}S_n\widehat{\mathcal{P}E}(P,\text{iso})\big)$ and denote it by $\widehat{SE}(P)$. 

The following theorem summaries necessary properties of Waldhausen K-theory spaces that we need for later discussion.

\begin{thm}\label{403}
The canonical exact functor $\widehat{\mathcal{P}}(P)\to \widehat{\mathcal{P}E}(P,\text{quasi-iso})$ induces a homotopy equivalence of Waldhausen K-theory spaces
$$\xymatrix{\varrho:\quad\mid\widehat{S}(P)\mid \ar[r]^-{\sim} & \mid N.{\rm w}S.\widehat{\mathcal{P}E}(P,\text{quasi-iso})\mid,}$$
and one may choose a homotopy inverse $\varrho^{-1}$ of $\varrho$ such that the composition
$$\xymatrix{ \mid\widehat{SE}(P)\mid \ar[r]^-{e} & \mid N.{\rm w}S.\widehat{\mathcal{P}E}(P,\text{quasi-iso})\mid \ar[r]^-{\varrho^{-1}} & \mid\widehat{S}(P)\mid}$$ is homotopic to the Euler characteristic map, which sends $\overline{F}.$ to $\sum(-1)^k\overline{F}_k$.
\end{thm}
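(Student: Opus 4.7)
The plan is to recognize both parts as a hermitian equivariant enhancement of the Gillet-Waldhausen theorem, with the Euler-characteristic statement furnished by Waldhausen's additivity theorem. As a preliminary reduction, the forgetful functor from hermitian equivariant bundles to equivariant bundles is an equivalence of exact categories: by Proposition~\ref{209} every equivariant bundle admits a $\mu_n$-invariant smooth at infinity metric, and the space of such metrics is convex and hence contractible. The same reduction extends verbatim to the Waldhausen categories of bounded complexes. Combined with Theorem~\ref{218}, this lets me work with the purely algebraic category $\mathcal{P}(P,\mu_n)$ in place of $\widehat{\mathcal{P}}(P)$. The existence of $\varrho$ is then the content of the Gillet-Waldhausen theorem: the inclusion of $\mathcal{P}(P,\mu_n)$ as complexes concentrated in degree zero into the Waldhausen category $\mathrm{Ch}^b(\mathcal{P}(P,\mu_n))$ with quasi-isomorphisms as weak equivalences induces a K-theory equivalence. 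The standard proof (via the Waldhausen fibration theorem applied to the subcategory of acyclic complexes) transfers without change to the equivariant setting.

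To construct a homotopy inverse $\varrho^{-1}$ that realizes the Euler characteristic, I would exploit Waldhausen's additivity theorem. For a bounded complex $\overline{F}.\in\widehat{\mathcal{P}E}(P,\mathrm{iso})$, the brutal truncation filtration yields, for each $k$, a cofibration sequence in $\widehat{\mathcal{P}E}(P,\mathrm{qis})$
\[\sigma_{>k}\overline{F}.\longrightarrow \sigma_{\geq k}\overline{F}.\longrightarrow \overline{F}_k[k].\]
Iterating additivity produces, in the stable homotopy category of spectra, a homotopy between $\varrho^{-1}\circ e$ and $\sum_k(-1)^k\mathrm{ev}_k$, where $\mathrm{ev}_k\colon \lvert\widehat{SE}(P)\rvert\to\lvert\widehat{S}(P)\rvert$ sends $\overline{F}.$ to its degree-$k$ term. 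The suspensions $[k]$ are absorbed as signs precisely upon passage to the stable category, which is why the identification is stated at the spectrum level rather than the space level.

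The main obstacle will be the second step: upgrading the additivity identity from a $K_0$-equality, which is immediate from the above cofibration sequences, to a coherent homotopy at the spectrum level. This requires assembling the brutal truncations into a simplicial diagram and invoking Waldhausen's additivity theorem coherently across the bisimplicial $N.wS.$-construction. A subordinate subtlety is that the homotopy inverse $\varrho^{-1}$ is only well-defined up to homotopy; the cleanest approach is to define $\varrho^{-1}\circ e$ directly from the additivity data and then verify, via Gillet-Waldhausen, that it descends to the correct stable class, rather than first picking $\varrho^{-1}$ abstractly and trying to compute the composition.
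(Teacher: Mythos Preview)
Your proposal is correct and follows essentially the same approach as the paper: the paper's proof simply cites Thomason--Trobaugh \cite[Theorem 1.11.7]{TT} and Gillet \cite[Theorem 6.2, Lemma 6.3]{Gi} for the Gillet--Waldhausen equivalence, and then points to \cite[(1.11.7.7), (1.11.7.9)]{TT} together with the additivity theorem applied to the bounded subcategories $\widehat{\mathcal{P}E}_a^b(P,\text{iso})$ for the Euler-characteristic identification---which is exactly your brutal-truncation argument spelled out. Your explicit unwinding of these references, including the preliminary reduction to the non-hermitian case, is more detailed than what the paper writes but matches it in substance.
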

\begin{proof}
The first statement is well known and it follows from Theorem~\ref{218} and \cite[Theorem 1.11.7]{TT}, and also from \cite[Theorem 6.2, Lemma 6.3]{Gi}. The second statement is a byproduct of the proof of \cite[Theorem 1.11.7]{TT}, see \cite[(1.11.7.7) and (1.11.7.9)]{TT}. If we write $\widehat{\mathcal{P}E}_a^b(P,\text{iso})$ for the full subcategory of those complexes $\overline{F}.$ such that $\overline{F}_k=0$ for $k\leq a-1$ and for $k\geq b+1$, then the Euler characteristic map should be understood as a homotopy equivalence (deduced from the additivity theorem) followed by the usual Euler characteristic map, namely
\begin{align*}
   \mid\widehat{SE}_a^b(P)\mid \simeq \prod^{b-a+1}\mid\widehat{S}(P)\mid & \longrightarrow \mid\widehat{S}(P)\mid \\
   (x_b,\ldots,x_a) & \longmapsto \sum(-1)^kx_k.
\end{align*}
For this map, see also \cite[p. 259]{Gi}.
\end{proof}

Now, we may write down a diagram
$$\xymatrix{\mid \widehat{S}(P)\mid \ar[r] \ar[dd]_-{\widetilde{{\rm ch}_g}} & \mid\widehat{SE}(P)\mid \ar[r] \ar[ldd]_-{\Sigma\widetilde{{\rm ch}_g}} & \mid N.{\rm w}S.\widehat{\mathcal{P}E}(P,\text{quasi-iso})\mid \ar[lldd]^-{\widetilde{{\rm ch}_g}\circ \varrho^{-1}}\\
\\
\mid \mathcal{K}(\bigoplus_{p\geq0}D^{2p-*}(P_{\mu_n},p)[-1]_{R_n})\mid,}$$
in which the map $\Sigma\widetilde{{\rm ch}_g}$ is defined in a similar way to $\widetilde{{\rm ch}_g}$. Precisely, suppose that we are given a cube of complexes of hermitian bundles, we will regard it as a complex of hermitian cubes and associate to it the alternating sum of the equivariant higher Bott-Chern forms of its elements. Then, according to Theorem~\ref{403}, we know that the left triangle in the above diagram commutes, the right triangle in the above diagram commutes up to homotopy, and the whole triangle $(\varrho,\widetilde{{\rm ch}_g}\circ \varrho^{-1},\widetilde{{\rm ch}_g})$ commutes up to homotopy.

Let us define a continuous map $\mathfrak{i}_*: \mid \widehat{S}(Y)\mid \to \mid\widehat{SE}(P)\mid$ as the geometric realization of a simplicial map coming from the exact functor $$\mathfrak{i}_*: \widehat{\mathcal{P}}(Y)\to \widehat{\mathcal{P}E}(P)$$ which sends a hermitian $\mu_n$-equivariant vector bundle $\overline{F}$ on $Y$ to its Koszul resolution $K(\overline{F},\overline{N})$ and the metric on the tautological quotient bundle $Q$ is supposed to satisfy the Bismut's assumption (A) (cf. \cite[Section 3.4]{KR} or see below). Then the map $i_*: \mid \widehat{S}(Y)\mid \to \mid\widehat{S}(P)\mid$ can be viewed, up to homotopy, as the composition $\varrho^{-1}\circ e\circ \mathfrak{i}_*$. On the other hand, we may associate a simplicial map $\mathfrak{i}_*: \mathbb{Z}\widehat{S}(Y)\to \mathbb{Z}\widehat{S}(P)$ to $i: Y\to P$ by sending a simplex $\sigma$ to $\sum(-1)^j\overline{Q}^\vee\otimes \sigma$. Then we have a natural commutative diagram

$$\xymatrix{\mid \widehat{S}(Y)\mid \ar[r]^-{i_*} \ar[d]_-{{\rm Hu}} & \mid\widehat{S}(P)\mid \ar[d]^-{{\rm Hu}}\\
\mid \mathbb{Z}\widehat{S}(Y)\mid \ar[r]^-{\mathfrak{i}_*} & \mid \mathbb{Z}\widehat{S}(P)\mid}$$
up to homotopy. Moreover, we have a commutative diagram of simplicial abelian groups
$$\xymatrix{\mathbb{Z}\widehat{S}(Y) \ar[r]^-{\mathfrak{i}_*} \ar[d]_-{\vartheta} & \mathbb{Z}\widehat{S}(P) \ar[d]^-{\vartheta}\\
\mathbb{Z}\widehat{S}(Y,\mu_n) \ar[r]^-{\mathfrak{i}_*} & \mathbb{Z}\widehat{S}(P,\mu_n).}$$

Therefore, to prove that $c$ is equal to $(i_*^H)^{-1}\circ \gamma_c\circ i_*$ in the homotopy category $Ho(\mathcal{V})$ is equivalent to show that the following square
\begin{align}\label{a2}
\xymatrix{\mathbb{Z}\widehat{S}(Y,\mu_n) \ar[r]^-{\mathfrak{i}_*} \ar[d]_-{{\rm Td}_g^{-1}(\overline{N})\bullet \widetilde{{\rm ch}_g}} & \mathbb{Z}\widehat{S}(P,\mu_n) \ar[d]^-{\widetilde{{\rm ch}_g}}\\
\mathcal{K}(\bigoplus_{p\geq0}{'D}^{2p-*}(Y_{\mu_n},p)[-1]_{R_n}) \ar[r]^-{i_*^H} & \mathcal{K}(\bigoplus_{p\geq0}{'D}^{2p-*}(P_{\mu_n},p)[-1]_{R_n})}
\end{align}
is commutative up to simplicial homotopy.

To construct an explicit simplicial homotopy for the square (\ref{a2}), we recall the equivariant Bott-Chern singular current due to Bismut.
This construction was realized via some current valued zeta function which involves the supertraces of Quillen's superconnections.

Let $i: Y\rightarrow X$ be a $\mu_n$-equivariant closed immersion of compact quasi-projective K\"{a}hler manifolds with hermitian normal bundle $\overline{N}$. We denote by $g$ a generator of $\mu_n(\C)$ and fix a primitive $n$-th root of unity $\zeta_n$ corresponding the action of $g$. Let $\overline{\eta}$ be a hermitian $g$-bundle on $Y$ and let $\overline{\xi}.$ be a complex of hermitian $g$-bundles on $X$ such that the underlying complex of $g$-bundles $\xi.$ provides a projective resolution of $i_*\eta$. We denote the differential of the complex $\xi.$ by $v$. Notice that $\xi.$ is acyclic outside $Y$ and the homology sheaves of its restriction to $Y$ are locally free. We write $H_n=\mathcal{H}_n(\xi.\mid_Y)$ and
define a $\Z$-graded bundle $H=\bigoplus_nH_n$. For $y\in Y$ and $u\in TX_y$, we denote by $\partial_uv(y)$ the derivative of $v$ at $y$ in the direction $u$ in any given holomorphic trivialization of $\xi.$ near $y$. Then the map $\partial_uv(y)$ acts on $H_y$ as a chain map, and this action only depends on the image $z$ of $u$ in $N_y$. So we get a chain complex of holomorphic vector bundles $(H,\partial_zv)$.

Let $\pi$ be the projection from the normal bundle $N$ to $Y$, we have a canonical identification of $\Z$-graded chain complexes
\begin{displaymath}
(\pi^*H,\partial_zv)\cong(\pi^*(\wedge^\bullet
N^\vee\otimes\eta),\sqrt{-1}i_z).
\end{displaymath}
Moreover, such an identification is an identification of $g$-bundles. By finite dimensional Hodge theory, for each $y\in Y$, there is a canonical
isomorphism
\begin{displaymath}
H_y\cong\{f\in \xi._y\mid vf=0, v^*f=0\}
\end{displaymath}
where $v^*$ is the dual of $v$ with respect to the metrics on $\xi.$. This means that $H$ can be regarded as a smooth $\Z$-graded $g$-equivariant subbundle of $\xi$ so that it carries an induced $g$-invariant metric. On the other hand, we endow $\wedge^\bullet N^\vee\otimes \eta$ with the metric induced by $\overline{N}$ and by $\overline{\eta}$.

\begin{defn}\label{BC810}
We say that the metrics on the complex of hermitian $g$-vector bundles $\overline{\xi}.$ satisfy Bismut's assumption (A) if
the identification $(\pi^*H,\partial_zv)\cong(\pi^*(\wedge^\bullet
N^\vee\otimes\eta),\sqrt{-1}i_z)$ also identifies the metrics.
\end{defn}

\begin{prop}\label{BC811}
There always exist $g$-invariant metrics on $\xi.$ which satisfy
Bismut's assumption (A) with respect to $\overline{N}$ and
$\overline{\eta}$.
\end{prop}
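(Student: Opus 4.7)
The plan is to split the argument into (i) constructing the desired $g$-invariant metric on the restriction $\xi.|_Y$, and (ii) extending it to a $g$-invariant metric on all of $\xi.$ over $X$. This split is natural because Bismut's assumption $(A)$ is a condition only on $\xi.|_Y$ via the induced metric on the harmonic subbundle $H$.

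For (i), I would first fix any $g$-invariant hermitian metric $h_0$ on $\xi.$, obtained by averaging an arbitrary smooth hermitian metric under the finite group $\mu_n(\mathbb C)$, exactly as in Proposition~\ref{209}. Its restriction to $Y$ gives, by finite-dimensional Hodge theory, a $g$-equivariant decomposition $\xi_n|_Y = H_n^{(0)} \oplus v(\xi_{n+1}|_Y) \oplus v^{*,h_0}(\xi_{n-1}|_Y)$, where $H_n^{(0)} = \{f \in \xi_n|_Y : vf = 0,\ v^{*,h_0}f = 0\}$ is the $g$-equivariant harmonic subbundle and is canonically identified with $\wedge^n N^\vee \otimes \eta$ in a $g$-equivariant way via the Koszul isomorphism. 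Transport the given $g$-invariant metric on $\wedge^n N^\vee \otimes \eta$ to $H_n^{(0)}$ along this identification, and then define a new metric $h_1$ on each $\xi_n|_Y$ that coincides with $h_0$ on the $h_0$-orthogonal complement $(H_n^{(0)})^{\perp}$, equals the transported metric on $H_n^{(0)}$, and makes these two summands mutually orthogonal. Since $H_n^{(0)}$ and $(H_n^{(0)})^{\perp}$ are both $g$-invariant subbundles and both constituent metrics are $g$-invariant, $h_1$ is $g$-invariant.

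The crux is to verify that the harmonic subbundle with respect to $h_1$ is still $H_n^{(0)}$, so that $(A)$ holds by construction. The subspaces $\ker v_n$ and $v(\xi_{n+1}|_Y)$ are independent of the metric, and $v(\xi_{n+1}|_Y) \subset (H_n^{(0)})^{\perp}$; since $h_1$ agrees with $h_0$ on that complement and declares $H_n^{(0)}$ orthogonal to it, we still have $H_n^{(0)} \perp_{h_1} v(\xi_{n+1}|_Y)$, hence $v^{*,h_1}$ annihilates $H_n^{(0)}$. Combined with $H_n^{(0)} \subset \ker v_n$ and the equality of ranks (both sides have the rank of $\wedge^n N^\vee \otimes \eta$), this forces $H_n^{(0)}$ to be the entire $h_1$-harmonic subbundle. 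By construction the induced metric on this bundle is precisely the transported metric from $\wedge^n N^\vee \otimes \eta$, so $(A)$ is satisfied.

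For (ii), extend $h_1$ from $\xi_n|_Y$ to any smooth hermitian metric on $\xi_n$ over $X$ (using a partition of unity), and then average the extension under $\mu_n(\mathbb C)$; the averaged metric is $g$-invariant and still restricts to $h_1$ on $Y$, because $h_1$ itself is already $g$-invariant. Since $(A)$ is a condition on the restriction to $Y$ and on the fixed metrics on $\overline{N}$ and $\overline{\eta}$, the resulting global $g$-invariant metrics on $\xi.$ satisfy $(A)$. The only non-bookkeeping step is the orthogonality-and-rank check in the previous paragraph; everything else reduces to the standard $\mu_n$-averaging mechanism already invoked in this paper.
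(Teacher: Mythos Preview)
Your argument is correct. The paper itself gives no proof beyond citing \cite[Proposition 3.5]{Bi}, so your proposal is strictly more detailed than what appears here. The construction you outline---fix an auxiliary $g$-invariant metric, use the resulting Hodge decomposition of $\xi.|_Y$ to isolate the harmonic piece, replace the metric on that piece by the one transported from $\wedge^\bullet N^\vee\otimes\eta$, and then extend and average---is essentially the standard argument behind Bismut's result, and your orthogonality-and-rank check for the invariance of the harmonic subbundle under the metric change is the right verification. The only comparison worth making is that the paper treats this as a black-box citation, whereas you have unpacked it; nothing in your route departs from what the cited reference does.
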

\begin{proof}
This is \cite[Proposition 3.5]{Bi}.
\end{proof}

Let $\nabla^{\xi}$ be the canonical hermitian holomorphic connection on $\xi.$, then for $u>0$, we may define a $g$-invariant superconnection \begin{displaymath}
A_u:=\nabla^\xi+\sqrt{u}(v+v^*)
\end{displaymath}
on the $\Z_2$-graded vector bundle $\xi$. On the other hand, we define a superconnection on $H$
\begin{displaymath}
B:=\nabla^H+\partial_z v+(\partial_z v)^*.
\end{displaymath}

\begin{lem}\label{BC812}
Let $N_H$ be the number operator on the complex $\xi.$ i.e. it
acts on $\xi_j$ as multiplication by $j$, then for $s\in \C$ and
$0< {\rm Re}(s)<\frac{1}{2}$, the current valued zeta function
\begin{displaymath}
Z_g(\overline{\xi}.)(s):=\frac{1}{\Gamma(s)}\int_0^\infty
u^{s-1}\{[{\rm Tr_s}(N_Hg{\rm exp}(-A_u^2))]-[\int_{N_g}{\rm Tr_s}(N_Hg{\rm exp}(-B^2))]\delta_{Y_g}\}{\rm d}u
\end{displaymath}
is well-defined on $X_g$ and it has a meromorphic continuation to the complex plane which is holomorphic at $s=0$.
\end{lem}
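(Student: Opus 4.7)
The plan is to analyze the integrand separately in the two regimes $u\to 0$ and $u\to\infty$, prove absolute convergence of the integral in the strip $0<\mathrm{Re}(s)<1/2$, and then extract a meromorphic continuation by a Mellin-transform argument in which the gamma factor $1/\Gamma(s)$ absorbs the simple pole at $s=0$.

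First I would treat the behavior at infinity. Since $\xi.$ is a resolution of $i_*\eta$, its cohomology vanishes off $Y$, and as $u\to\infty$ the superconnection heat operator $\exp(-A_u^2)$ localizes onto the kernel of $v+v^*$. Combined with the identification $(\pi^*H,\partial_z v)\cong(\pi^*(\wedge^\bullet N^\vee\otimes\eta),\sqrt{-1}\,i_z)$ of Bismut's assumption (A), one shows that the difference
\[
[\mathrm{Tr_s}(N_Hg\exp(-A_u^2))]-\Big[\int_{N_g}\mathrm{Tr_s}(N_Hg\exp(-B^2))\Big]\delta_{Y_g}
\]
decays like $e^{-cu}$ uniformly against any test form, so the integral $\int_1^\infty u^{s-1}(\cdots)du$ is entire in $s$. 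The key input here is Bismut's fiberwise Mehler-type estimate combined with the equivariant localization of the heat kernel on $X_g$ near $Y_g$.

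Second, and this is the main obstacle, one needs the $u\to 0^+$ asymptotic expansion of the integrand as a current on $X_g$. The idea is to apply the equivariant local family index theorem for immersions: after rescaling, $\mathrm{Tr_s}(N_Hg\exp(-A_u^2))$ admits an expansion $\sum_{k\ge -N}a_k(u)u^{k/2}$ (with $a_k$ currents on $X_g$), whose polar (in $u$) part is supported on $Y_g$ and is exactly matched by the fiber-integral term $[\int_{N_g}\mathrm{Tr_s}(N_Hg\exp(-B^2))]\delta_{Y_g}$, which is the model computed on the normal bundle. This cancellation, which is the content of Bismut's analysis in \cite{Bi}, shows that the integrand near $u=0$ has an asymptotic expansion of the form $\sum_{k\ge 0}b_k u^{k/2-1/2}$ modulo a remainder that is $O(u^N)$ for arbitrary $N$, after pairing with any fixed test form. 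This guarantees convergence of $\int_0^1 u^{s-1}(\cdots)du$ for $\mathrm{Re}(s)>1/2-1/2=0$ and, combined with the infinity estimate, makes $Z_g(\overline{\xi}.)(s)$ a well-defined current on $X_g$ for $0<\mathrm{Re}(s)<1/2$.

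Finally, for the meromorphic continuation one uses the standard procedure: truncate the asymptotic expansion at order $K$, write
\[
\int_0^1 u^{s-1}\Big(\sum_{k=0}^{K}b_k u^{k/2-1/2}+R_K(u)\Big)du=\sum_{k=0}^{K}\frac{b_k}{s+(k-1)/2}+\int_0^1 u^{s-1}R_K(u)du,
\]
where the remainder integral is holomorphic for $\mathrm{Re}(s)>-K/2$. Letting $K\to\infty$ produces a meromorphic continuation of the bracketed integral to all of $\C$ with at worst simple poles at the half-integers $s=(1-k)/2$. Multiplication by $1/\Gamma(s)$, which has a simple zero at $s=0$, then yields a function holomorphic at $s=0$ (the pole at $s=0$, if present, is killed, while half-integer poles are unaffected). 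This completes the plan; the delicate step is really the uniform heat-kernel expansion and the cancellation of its divergent part against the normal-bundle contribution, which is supplied by Bismut's Proposition~3.5 and the attendant local index computations.
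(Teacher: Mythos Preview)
The paper does not supply a proof of this lemma; it is stated as a recalled fact from Bismut's work \cite{Bi}, so there is no argument to compare against directly. That said, your sketch contains a genuine error in the asymptotic analysis, and it is worth correcting.

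You have the two regimes essentially reversed. As $u\to 0$, the superconnection $A_u=\nabla^\xi+\sqrt{u}(v+v^*)$ tends to $\nabla^\xi$, so $\mathrm{Tr_s}(N_Hg\exp(-A_u^2))$ is a smooth form with a finite limit; the subtracted term $[\int_{N_g}\mathrm{Tr_s}(N_Hg\exp(-B^2))]\delta_{Y_g}$ is independent of $u$. Hence the integrand is bounded near $0$, and convergence of $\int_0^1 u^{s-1}(\cdots)\,du$ requires only $\mathrm{Re}(s)>0$; no cancellation of polar parts is needed there. (Incidentally, your arithmetic is also off: an expansion beginning with $u^{-1/2}$ would force $\mathrm{Re}(s)>1/2$, not $>0$.)

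The subtle regime is $u\to\infty$. There the heat operator localizes onto the kernel of $v+v^*$ and, as a current on $X_g$, $[\mathrm{Tr_s}(N_Hg\exp(-A_u^2))]$ converges to the subtracted normal-bundle term; Bismut's estimates give a difference of order $O(u^{-1/2})$, not $e^{-cu}$. This polynomial rate is precisely what produces the upper bound $\mathrm{Re}(s)<1/2$ in the stated strip of convergence. If the decay were exponential as you claim, $\int_1^\infty$ would be entire and the strip would be the half-plane $\mathrm{Re}(s)>0$, contradicting the lemma. The meromorphic continuation then proceeds by expanding both the small-$u$ and large-$u$ tails in half-integer powers (the latter coming from Bismut's large-time expansion), and the factor $1/\Gamma(s)$ kills the possible simple pole at $s=0$ as you say.
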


\begin{defn}\label{BC813}
The equivariant Bott-Chern singular current on $X_g$ associated to
the resolution $\overline{\xi}.$ is defined as
\begin{displaymath}
T_g(\overline{\xi}.):=-\frac{1}{2}\frac{\partial}{\partial
s}\mid_{s=0}Z_g(\overline{\xi}.)(s).
\end{displaymath}
\end{defn}

\begin{thm}\label{BC814}
The current $T_g(\overline{\xi}.)$ belongs to $\oplus_{p\geq0}{'D}^{2p-1}(X_g,p)$ and it satisfies the differential equation
\begin{displaymath}
d(T_g(\overline{\xi}.))=\sum_k(-1)^k{\rm ch}_g(\overline{\xi}_k)-{i_{\mu_n}}_!\big({\rm
ch}_g(\overline{\eta}){\rm
Td}_g^{-1}(\overline{N})\big).
\end{displaymath}
\end{thm}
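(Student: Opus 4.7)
The plan is to follow Bismut's original strategy, in which the current $T_g(\overline{\xi}.)$ is obtained as a regularized Mellin transform of a family of equivariant supertraces, and the differential equation is read off from the two endpoints of the transgression. The whole calculation is modelled on the non-equivariant Bismut--Gillet--Soul\'e computation, with the role of the pointwise trace played by the localized equivariant trace at the automorphism $g$.

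First I would establish the analytic regularity needed to make sense of $T_g(\overline{\xi}.)$. Set $\alpha_g(u):={\rm Tr}_s(N_H g\exp(-A_u^2))-[\int_{N_g}{\rm Tr}_s(N_H g\exp(-B^2))]\delta_{Y_g}$ as a current on $X_g$. One shows, via the equivariant local index machinery (a $g$-equivariant version of the Getzler rescaling), that as $u\to 0$ the operator $g\exp(-A_u^2)$ has a $g$-localized short-time asymptotic whose leading contribution involves only the restriction to $X_g$, and that $\alpha_g(u)=O(u^{1/2})$ there, so $\int_0^1 u^{s-1}\alpha_g(u)\,du$ extends meromorphically to $\{{\rm Re}(s)>-1/2\}$ and is holomorphic at $s=0$. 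For $u\to\infty$ one uses Bismut's assumption (A) together with the identification of $(\pi^*H,\partial_z v)$ with the Koszul complex of $\overline{N}$ to show that the off-$Y_g$ contribution decays exponentially and that the on-$Y_g$ contribution is exactly cancelled by the second term of $\alpha_g$; thus $\int_1^\infty u^{s-1}\alpha_g(u)\,du$ is entire. Combining these, $Z_g(\overline{\xi}.)(s)$ is holomorphic at $s=0$, and the Hodge-theoretic verification that each $\alpha_g(u)$ lies in the appropriate $(p-1,p-1)$ real component (the superconnection and $g$ are both compatible with the K\"ahler real structure) then gives $T_g(\overline{\xi}.)\in\bigoplus_p{'D}^{2p-1}(X_g,p)$.

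Second I would derive the differential equation by a transgression. The superconnection identity
\begin{equation*}
\tfrac{\partial}{\partial u}{\rm Tr}_s(g\exp(-A_u^2))=-d\,{\rm Tr}_s\bigl(\tfrac{\partial A_u}{\partial u}\,g\exp(-A_u^2)\bigr)
\end{equation*}
combined with the commutation of $N_H$ with $g$ yields, after integration by parts against $u^{s-1}/\Gamma(s)$,
\begin{equation*}
d\,Z_g(\overline{\xi}.)(s)=s\cdot\frac{1}{\Gamma(s+1)}\int_0^\infty u^s\,\beta_g(u)\,du
\end{equation*}
for a suitable current $\beta_g(u)$; evaluating the derivative in $s$ at $s=0$, using the Tauberian limits from step one, converts this into the identity
\begin{equation*}
d\,T_g(\overline{\xi}.)=\lim_{u\to 0}{\rm Tr}_s(g\exp(-A_u^2))-\Bigl[\int_{N_g}\lim_{u\to\infty}{\rm Tr}_s(g\exp(-B^2))\Bigr]\delta_{Y_g},
\end{equation*}
both limits being taken as currents on $X_g$.

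Third I would identify the two limits. The $u\to 0$ limit is the equivariant local index theorem for the elliptic complex $\overline{\xi}.$: the heat kernel asymptotics give $\sum_k(-1)^k{\rm ch}_g(\overline{\xi}_k)$ as a smooth form on $X_g$. For the $u\to\infty$ limit, after passing to the model complex $(\wedge^\bullet N^\vee\otimes\eta,\sqrt{-1}i_z)$ supplied by assumption (A), the fibre integral $\int_{N_g}{\rm Tr}_s(g\exp(-B^2))$ is exactly the equivariant Mathai--Quillen representative of the Thom class on the normal bundle restricted to the fixed locus; Bismut's computation of this Thom form in the $g$-equivariant setting identifies it with ${\rm ch}_g(\overline{\eta})\,{\rm Td}_g^{-1}(\overline{N})$ on $Y_g$, so the whole term equals ${i_{\mu_n}}_!({\rm ch}_g(\overline{\eta}){\rm Td}_g^{-1}(\overline{N}))$. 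Assembling these pieces gives the claimed formula.

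The main obstacle, and the reason the result is genuinely deep rather than a routine manipulation, is the $u\to\infty$ asymptotic: one must prove that $g\exp(-A_u^2)$ concentrates on $Y_g$ with the precise Mathai--Quillen profile, which requires a delicate uniform estimate for the $g$-twisted heat kernel of the superconnection in a neighbourhood of $Y_g$, together with exponential decay outside such a neighbourhood; this is the $g$-equivariant analogue of the Bismut--Lebeau localization and is exactly where assumption (A) enters in an essential way.
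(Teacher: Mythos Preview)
The paper does not actually give a proof of this theorem. It is stated without proof as a known result due to Bismut, drawn from \cite{Bi} (J.-M.~Bismut, \emph{Equivariant immersions and Quillen metrics}); the surrounding Lemma and Definition are likewise quoted from that source, and the text resumes immediately after the statement with ``Now, let us go back to the square (\ref{a2}).''

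Your proposal is therefore not comparable to a proof in the paper, but it is a faithful outline of Bismut's original argument: the regularized Mellin transform of the transgression current, the small-$u$ asymptotic via the equivariant local index theorem giving $\sum_k(-1)^k{\rm ch}_g(\overline{\xi}_k)$, and the large-$u$ concentration on $Y_g$ via assumption~(A) and the equivariant Mathai--Quillen form producing ${i_{\mu_n}}_!({\rm ch}_g(\overline{\eta}){\rm Td}_g^{-1}(\overline{N}))$. That is indeed the structure of the proof in \cite{Bi}, and your identification of the $u\to\infty$ localization as the genuinely delicate step is accurate.
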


Now, let us go back to the square (\ref{a2}). Denote by $i_Y$ (resp. $i_P$) the closed immersion $Y_{\mu_n}\hookrightarrow Y$ (resp. $P_{\mu_n}\hookrightarrow P$) with respect to the fixed point subscheme $Y_{\mu_n}$ (resp. $P_{\mu_n}$). Constructing the embedding morphism $\mathfrak{i}_*$ for hermitian $k$-cubes in the same way as for $(k+1)$-simplices, after that we consider the following diagram
$$\xymatrix{ \Z\widehat{S}(Y,\mu_n) \ar[r]^-{{\rm Cub}} \ar[d]^-{\mathfrak{i}_*} & \mathcal{K}(\widehat{\Z}C_*(Y,\mu_n)[-1]) \ar[r]^-{i_Y^*} \ar[d]^-{\mathfrak{i}_*} & \mathcal{K}(\widehat{\Z}C_*(Y_{\mu_n},\mu_n)[-1]) \\
\Z\widehat{S}(P,\mu_n) \ar[r]^-{{\rm Cub}} & \mathcal{K}(\widehat{\Z}C_*(P,\mu_n)[-1]) \ar[r]^-{i_P^*}
& \mathcal{K}(\widehat{\Z}C_*(P_{\mu_n},\mu_n)[-1]).}$$
It is clear that the left square in the above diagram is naturally commutative, so to prove Theorem~\ref{401} for the case of zero section embedding, we are left to prove the following proposition.

\begin{prop}\label{404}
The diagram
$$\xymatrix{\widehat{\Z}C_*(Y,\mu_n) \ar[r]^-{i_Y^*} \ar[d]^-{\mathfrak{i}_*} & \widehat{\Z}C_*(Y_{\mu_n},\mu_n) \ar[r]^-{{\rm ch}_g} & \bigoplus_{p\geq0}{'D}^{2p-*}(Y_{\mu_n},p)_{R_n} \ar[d]^-{{i_{\mu_n}}_!\circ \big((\cdot)\bullet {\rm Td}^{-1}_g(\overline{N}_{P/Y})\big)} \\
\widehat{\Z}C_*(P,\mu_n) \ar[r]^-{i_P^*} & \widehat{\Z}C_*(P_{\mu_n},\mu_n) \ar[r]^-{{\rm ch}_g} & \bigoplus_{p\geq0}{'D}^{2p-*}(P_{\mu_n},p)_{R_n}}$$
is commutative up to homotopy of chain complexes. A homotopy between ${\rm ch}_g\circ i_P^* \circ \mathfrak{i}_*$ and ${i_{\mu_n}}_!\circ \big({\rm ch}_g\bullet {\rm Td}_g(\overline{N}_{P/Y})\big)\circ i_Y^*$ is given by
$$\mathbf{H}_k(\overline{F})=T_g\big(K(\overline{O}_Y,\overline{N})\big)\bullet \pi_{\mu_n}^*\big({\rm ch}_g(i_Y^*\overline{\mathcal{F}})\big)$$
where $T_g\big(K(\overline{O}_Y,\overline{N})\big)$ is the Bott-Chern singular current associated to the Koszul resolution $K(\overline{O}_Y,\overline{N})$.
\end{prop}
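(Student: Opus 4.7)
The proof hinges on Bismut's differential equation (Theorem~\ref{BC814}) applied to the Koszul resolution $K(\overline{O}_Y,\overline{N})$ of the structure sheaf of the zero section. Since ${\rm ch}_g(\overline{O}_Y)=1$ this reads
$$
dT_g\bigl(K(\overline{O}_Y,\overline{N})\bigr) = \sum_{k}(-1)^{k}{\rm ch}_g\bigl(\wedge^{k}\overline{Q}^{\vee}|_{P_{\mu_n}}\bigr) - {i_{\mu_n}}_{!}\bigl({\rm Td}_{g}^{-1}(\overline{N})\bigr).
$$
The plan is to combine this identity with (i) the Leibniz rule for the $\bullet$-product on the Deligne homology complex, (ii) the projection formula ${i_{\mu_n}}_{!}(\alpha\bullet i_{\mu_n}^{*}\beta) = {i_{\mu_n}}_{!}(\alpha)\bullet\beta$ together with $i_{\mu_n}^{*}\pi_{\mu_n}^{*}={\rm id}$ (which holds because $\pi\circ i={\rm id}_Y$), and (iii) the chain-map property of ${\rm ch}_g$ on cubes established in Theorem~\ref{216}.

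The first step is to verify the identity for $0$-cubes, i.e.\ for a single hermitian $\mu_n$-equivariant bundle $\overline{F}$ on $Y$. Since $K(\overline{F},\overline{N}) = K(\overline{O}_Y,\overline{N})\otimes\pi^{*}\overline{F}$, multiplicativity of the equivariant Chern character yields
$$
{\rm ch}_g\bigl(i_{P}^{*}\mathfrak{i}_{*}\overline{F}\bigr) = \sum_{k}(-1)^{k}{\rm ch}_g\bigl(\wedge^{k}\overline{Q}^{\vee}|_{P_{\mu_n}}\bigr)\bullet\pi_{\mu_n}^{*}{\rm ch}_g(i_{Y}^{*}\overline{F}),
$$
while the projection formula gives
$$
{i_{\mu_n}}_{!}\bigl({\rm Td}_{g}^{-1}(\overline{N})\bullet{\rm ch}_g(i_{Y}^{*}\overline{F})\bigr) = {i_{\mu_n}}_{!}\bigl({\rm Td}_{g}^{-1}(\overline{N})\bigr)\bullet\pi_{\mu_n}^{*}{\rm ch}_g(i_{Y}^{*}\overline{F}).
$$
Subtracting and invoking Bismut's formula, the difference becomes $dT_g\bigl(K(\overline{O}_Y,\overline{N})\bigr)\bullet\pi_{\mu_n}^{*}{\rm ch}_g(i_{Y}^{*}\overline{F})$. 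Since ${\rm ch}_g(i_{Y}^{*}\overline{F})$ is closed on a $0$-cube, Leibniz identifies this with $d\mathbf{H}_{0}(\overline{F})$, establishing the identity.

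For $k\geq 1$, the higher Bott-Chern form ${\rm ch}_g(i_{Y}^{*}\overline{\mathcal{F}})$ is no longer closed, but the chain-map property gives $d\,{\rm ch}_g(i_{Y}^{*}\overline{\mathcal{F}}) = {\rm ch}_g(i_{Y}^{*}d\overline{\mathcal{F}})$. Since $T_g$ sits in odd total degree $2p-1$, Leibniz produces
$$
d\mathbf{H}_{k}(\overline{\mathcal{F}}) = dT_g\bullet\pi_{\mu_n}^{*}{\rm ch}_g(i_{Y}^{*}\overline{\mathcal{F}}) - T_g\bullet\pi_{\mu_n}^{*}{\rm ch}_g(i_{Y}^{*}d\overline{\mathcal{F}}),
$$
and the second summand is precisely $-\mathbf{H}_{k-1}(d\overline{\mathcal{F}})$. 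Therefore $d\mathbf{H}_{k}(\overline{\mathcal{F}})+\mathbf{H}_{k-1}(d\overline{\mathcal{F}})$ equals $dT_g\bullet\pi_{\mu_n}^{*}{\rm ch}_g(i_{Y}^{*}\overline{\mathcal{F}})$, and the same projection-formula/multiplicativity argument as in the base case (with $\overline{\mathcal{F}}$ replacing $\overline{F}$, and using that $\mathfrak{i}_{*}$ is applied edge-wise on $k$-cubes and commutes with tensoring by $\pi^{*}\overline{\mathcal{F}}$) identifies this with the desired difference $\bigl({\rm ch}_g\circ i_{P}^{*}\circ\mathfrak{i}_{*} - {i_{\mu_n}}_{!}\circ((\cdot)\bullet{\rm Td}_{g}^{-1}(\overline{N}))\circ{\rm ch}_g\circ i_{Y}^{*}\bigr)(\overline{\mathcal{F}})$.

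The main obstacle is cubical bookkeeping: one needs to verify that $\mathfrak{i}_{*}$ commutes strictly with $i_{P}^{*}$ on $k$-cubes, that multiplicativity of ${\rm ch}_g$ for the tensor product of a $0$-cube with a $k$-cube is compatible with the transgression construction so the base-case computation really extends term-by-term, and that degenerate $k$-cubes on $Y$ produce degenerate data on both sides of the diagram so the whole identity descends to $\widetilde{\Z}C_{*}$. These diagrammatic checks run parallel to Burgos-Wang's non-equivariant treatment in \cite{BW} and rely on Proposition~\ref{BC811} to ensure all metrics in play can be chosen $\mu_n$-invariantly and compatibly with Bismut's assumption (A).
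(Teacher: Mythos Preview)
Your proof is correct and follows essentially the same route as the paper: both arguments reduce the difference of the two compositions to $dT_g(K(\overline{\mathcal{O}}_Y,\overline{N}))\bullet\pi_{\mu_n}^{*}{\rm ch}_g(i_Y^{*}\overline{\mathcal{F}})$ via multiplicativity of ${\rm ch}_g$ on the Koszul tensor product and the projection formula, then invoke Bismut's differential equation together with the Leibniz rule to recognize this as $d\mathbf{H}_k(\overline{\mathcal{F}})+\mathbf{H}(d\overline{\mathcal{F}})$. The paper simply carries out the computation uniformly for all $k$ rather than isolating the $0$-cube case first, and absorbs your ``cubical bookkeeping'' concerns into the one-line remark that $\lambda$ commutes with pull-backs and with $\mathfrak{i}_*$.
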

\begin{proof}
Write ${\rm ch}_g={\rm ch}_g\circ i_Y^*$ and ${\rm ch}_g={\rm ch}_g\circ i_P^*$ for short. Observe that the morphism $\lambda$ commutes with the pull-back maps and with the pushforward $\mathfrak{i}_*$. We compute, for a hermitian $k$-cube $\overline{\mathcal{F}}$,
\begin{align*}
&{\rm ch}_g(\mathfrak{i}_*\overline{\mathcal{F}})-{i_{\mu_n}}_!\big({\rm ch}_g(\overline{\mathcal{F}})\bullet {\rm Td}^{-1}_g(\overline{N}_{P/Y})\big)\\
=& {\rm ch}_g(\pi^*\overline{\mathcal{F}})\bullet\sum_{l=0}^n(-1)^l{\rm ch}_g(\wedge^l\overline{Q}^\vee)-{i_{\mu_n}}_!\big(i_{\mu_n}^*\pi_{\mu_n}^*{\rm ch}_g(\overline{\mathcal{F}})\bullet {\rm Td}^{-1}_g(\overline{N}_{P/Y})\big)\\
=& [\sum_{l=0}^n(-1)^l{\rm ch}_g(\wedge^l\overline{Q}^\vee)-{i_{\mu_n}}_!\big({\rm Td}^{-1}_g(\overline{N}_{P/Y})\big)]\bullet \pi_{\mu_n}^*{\rm ch}_g(\overline{\mathcal{F}})\\
=& d\big(T_g(K(\overline{O}_Y,\overline{N}))\big)\bullet \pi_{\mu_n}^*{\rm ch}_g(\overline{\mathcal{F}})-\mathbf{H}_k(d\overline{\mathcal{F}})+\mathbf{H}_k(d\overline{\mathcal{F}})\\
=& d\big(T_g(K(\overline{O}_Y,\overline{N}))\big)\bullet \pi_{\mu_n}^*{\rm ch}_g(\overline{\mathcal{F}})-T_g(K(\overline{O}_Y,\overline{N}))\bullet d\big(\pi_{\mu_n}^*{\rm ch}_g(\overline{\mathcal{F}})\big)+\mathbf{H}_k(d\overline{\mathcal{F}})\\
=&d\big(\mathbf{H}_k(\overline{F})\big)+\mathbf{H}_k(d\overline{\mathcal{F}}).
\end{align*}
So we are done.
\end{proof}

To end this subsection, let us introduce a push-forward map
$$({i_{\mu_n}}_!)': \bigoplus_{p\geq 0}D^{2p-*}(Y_{\mu_n},p)[-1]\to \bigoplus_{p\geq 0}D^{2p-*}(P_{\mu_n},p)[-1]$$
in order to produce an arithmetic self intersection formula in the last section. Firstly, notice that the fixed point submanifold of
$\mathbb{P}(N\oplus\mathcal{O}_Y)$ is $\mathbb{P}(N_{g}\oplus \mathcal{O}_{Y_{\mu_n}})\coprod_{\zeta\neq1} \mathbb{P}(N_\zeta)$ and the closed immersion $i_{\mu_n}$ factors through $\mathbb{P}(N_{g}\oplus \mathcal{O}_{Y_{\mu_n}})$  whose image doesn't meet the other
components $\mathbb{P}(N_\zeta)$. Moreover the
complex $K(\mathcal{O}_{Y},N)_g$, obtained by taking the
$0$-degree part of the Koszul resolution $K(\mathcal{O}_{Y},N)$ provides a resolution of $\mathcal{O}_{Y_{\mu_n}}$ on $P_{\mu_n}$ (cf. \cite[Section 2 and 6.2]{KR}). We denote $P_0:=\mathbb{P}(N_{g}\oplus\mathcal{O}_{Y_{\mu_n}})$ and $r:={\rm rank}(Q_g\mid_{P_0})$, then $c_r(\overline{Q}_g\mid_{P_0})$ is a well-defined smooth differential form in $D^{2r}(P_{\mu_n},r)$. We define
\begin{align*}
({i_{\mu_n}}_!)': \bigoplus_{p\geq 0}D^{2p-*}(Y_{\mu_n},p)[-1] & \longrightarrow \bigoplus_{p\geq 0}D^{2p-*}(P_{\mu_n},p)[-1] \\
x & \longmapsto \pi_{\mu_n}^*(x)\bullet c_r(\overline{Q}_g\mid_{P_0}).
\end{align*}
This is a morphism of complexes since the Chern form $c_r$ is closed. Now, we obtain a square
\begin{align}\label{a3}
\xymatrix{\bigoplus_{p\geq0}{D}^{2p-*}(Y_{\mu_n},p)[-1] \ar[r]^-{({i_{\mu_n}}_!)'} \ar[d]_-{[\cdot]} & \bigoplus_{p\geq0}{D}^{2p-*}(P_{\mu_n},p)[-1] \ar[d]^-{[\cdot]}\\
\bigoplus_{p\geq0}{'D}^{2p-*}(Y_{\mu_n},p)[-1] \ar[r]^-{{i_{\mu_n}}_!} & \bigoplus_{p\geq0}{'D}^{2p-*}(P_{\mu_n},p)[-1]}
\end{align}
which is commutative up to chain homotopy. An explicit homotopy is given by the Euler-Green current $\widetilde{e}(P_0,\overline{Q}_g\mid_{P_0},i_{\mu_n})=T_g\big(K(\overline{O}_Y,\overline{N})\big)\bullet{\rm Td}_g(\overline{Q})\in \bigoplus_{r\geq0}{'D}^{2r-1}(P_{\mu_n},r)$  associated to the non-equivariant zero section embedding $i_{\mu_n}: Y_{\mu_n}\hookrightarrow P_0$. This Euler-Green current satisfies the differential equation
$$d\big(\widetilde{e}(P_0,\overline{Q}_g\mid_{P_0},i_{\mu_n})\big)=[c_r(\overline{Q}_g\mid_{P_0})]-\delta_{Y_{\mu_n}}.$$
As a consequence of the commutativity (up to homotopy) of the square (\ref{a3}), the continuous map
$$(i_*^H)':\quad \mid \mathcal{K}(\bigoplus_{p\geq0}{D}^{2p-*}(Y_{\mu_n},p)[-1]_{R_n})\mid \to \mid \mathcal{K}(\bigoplus_{p\geq0}{D}^{2p-*}(P_{\mu_n},p)[-1]_{R_n})\mid$$
induces by $({i_{\mu_n}}_!)': \bigoplus_{p\geq0}{D}^{2p-*}(Y_{\mu_n},p)[-1] \to \bigoplus_{p\geq0}{D}^{2p-*}(P_{\mu_n},p)[-1]$ is homotopic to
$${i_*^H}: \quad \mid \mathcal{K}(\bigoplus_{p\geq0}{D}^{2p-*}(Y_{\mu_n},p)[-1]_{R_n})\mid \to \mid \mathcal{K}(\bigoplus_{p\geq0}{D}^{2p-*}(P_{\mu_n},p)[-1]_{R_n})\mid$$ induced by
${i_{\mu_n}}_!: \bigoplus_{p\geq0}{'D}^{2p-*}(Y_{\mu_n},p)[-1] \to \bigoplus_{p\geq0}{'D}^{2p-*}(P_{\mu_n},p)[-1]$.

Finally, notice that the metric of $\overline{Q}^\vee$ is supposed to satisfy Bismut's assumption (A), then $\overline{Q}^\vee\mid_Y$ is equivariantly isometric to $\overline{N}^\vee$ and we have
\begin{align*}
i_{\mu_n}^*({i_{\mu_n}}_!)'\big({\rm
Td}_g^{-1}(\overline{N})\big)&=i_{\mu_n}^*\big(\pi_{\mu_n}^*({\rm
Td}_g^{-1}(\overline{N}))\bullet c_r(\overline{Q}_g\mid_{P_0})\big)\\
&={\rm
Td}_g^{-1}(\overline{N})\bullet c_r(\overline{N}_g)\\
&={\rm ch}_g(\sum_{j=0}^{{\rm rk}N}(-1)^j\wedge^j\overline{N}^\vee).
\end{align*}

\subsection{Proof of the statement}
\label{se:4.3}
In this subsection, we finish the proof of Theorem~\ref{401}, so let assumptions and notations be as in Section~\ref{sec:4.1}. We shall use the deformation to the normal cone construction, this is a general technique to deal with Riemann-Roch problems. Denote $S={\rm Spec}(D)$ and let $W$ be the blow up of $X\times \mathbb{A}_S^1$ along $Y\times\{0\}$, we have a diagram
$$\xymatrix{ Y\times\mathbb{A}_S^1 \ar[r]^-{l} \ar[d]^-p \ar[rd] & W \ar[d]^-{\pi} \\
Y & \mathbb{A}_S^1}$$
such that $l$ is a closed immersion and for $t\neq0$ the inclusion $l_t: Y\times \{t\} \hookrightarrow W_t=\pi^{-1}(t)\cong X\times\{t\}$ is induced by the inclusion $i: Y\hookrightarrow X$. For $t=0$, $W_0=\pi^{-1}(0)$ is isomorphic to $\mathbb{P}(N_{X/Y}\oplus \mathcal{O}_Y)\cup \widetilde{X}$ with $\widetilde{X}$ the blow up of $X$ along $Y$ and the image $l_0(Y\times\{0\})$ (in fact $l(Y\times \mathbb{A}_S^1)$) doesn't meet $\widetilde{X}$. A core property of the deformation to the normal cone construction is that the two squares in the following deformation diagram are both Tor-independent.
$$\xymatrix{ Y\times\{t\} \ar[r]^-{s_t} \ar[d]^-{l_t} & Y\times \mathbb{A}_S^1 \ar[d]^-{l} & Y\times\{0\} \ar[l]_-{s_0}  \ar[d]^-{l_0} \\
X\times\{t\} \ar[r] & W & \mathbb{P}(N_{X/Y}\oplus \mathcal{O}_Y). \ar[l]}$$

Regard $\mathbb{A}_S^1$ as a $\mu_n$-equivariant scheme with the trivial action and notice that the deformation to the normal cone construction also works in the $\mu_n$-equivariant case, we apply all constructions before the statement of Theorem~\ref{401} to the closed immersion $l: Y\times\mathbb{A}_S^1\to W$. To this, we fix a smooth at infinity K\"{a}hler metric on $W(\C)$ and endow all its submanifolds with the induced metrics. Notice that we have $N_{W/{\mathbb{A}_Y^1}}\mid_{W_t}\cong N_{W_t/Y}$.

\begin{lem}\label{405}
Let $Z=X$ if $t\neq0$ and $Z=\mathbb{P}(N_{X/Y}\oplus \mathcal{O}_Y)$ if $t=0$. Let the symbol $\mathcal{K}(D(\cdot))$ stand for the simplicial abelian group $\mathcal{K}(\bigoplus_{p\geq0}\widetilde{D}^{2p-*}((\cdot)_{\mu_n},p)[-1]_{R_n})$. Then the continuous maps ${\gamma_c}_t\circ s_t^*$ and $s_t^*\circ \gamma_c$ deduced in the following diagram
$$\xymatrix@!0{
  && Fj^* \ar[rrrr] \ar@{.>}[ddll]_-{\gamma_c} \ar@{.>}'[dd][dddd]^-{s_t^*}
      &&  && \mid \widehat{S}(W)\mid \ar[rrrr] \ar[ddll] \ar'[dd][dddd]
      &&  && \mid \widehat{S}(W\setminus{\mathbb{A}_Y^1})\mid \ar[ddll] \ar[dddd]        \\ \\
  F{j}_H^* \ar[rrrr] \ar@{.>}[dddd]^-{s_t^*}
      &&  && \mid \mathcal{K}(DW)\mid \ar[rrrr] \ar[dddd]
      &&  && \mid \mathcal{K}(D(W\setminus{\mathbb{A}_Y^1}))\mid \ar[dddd] \\ \\
  && F{j}_t^* \ar@{.>}[ddll]^-{{\gamma_c}_t} \ar'[rr][rrrr]
      &&  && \mid \widehat{S}(Z)\mid \ar[ddll] \ar'[rr][rrrr]
      &&  && \mid \widehat{S}(Z\setminus Y)\mid  \ar[ddll]            \\ \\
  F{j_t}_H^* \ar[rrrr]
      &&  && \mid \mathcal{K}(DZ)\mid \ar[rrrr]
      &&  && \mid \mathcal{K}(D(Z\setminus Y))\mid        }$$
are equal.
\end{lem}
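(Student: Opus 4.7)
The plan is to apply part~(1) of Theorem~\ref{301} twice and to exploit its uniqueness clause. What the lemma really asks is that the two a priori different maps $Fj^* \to F{j_t}_H^*$, obtained by running around the outer rectangle of the cube in two different ways, agree. Since Theorem~\ref{301}.(1) yields a \emph{unique} induced map on homotopy fibres whenever the underlying square of based maps commutes strictly, the whole argument reduces to checking strict commutativity of the six faces of the cube.

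First I would verify that each face commutes on the nose. The front faces, which define $\gamma_c$ for $W$ and ${\gamma_c}_t$ for $Z$, are strictly commutative by the construction recorded in Definition~\ref{302}: the simplicial map $\widetilde{{\rm ch}_g}$ is assembled from the Hurewicz map, the Cub functor, pullback to the fixed locus, the $\mu_n$-averaging $\vartheta$, and evaluation of equivariant higher Bott-Chern forms, and each of these is strictly natural with respect to pullback along an equivariant open immersion such as $j$ or $j_t$. The top and bottom faces commute strictly because $s_t^* \circ j^* = j_t^* \circ s_t^*$ holds as an identity of simplicial maps; this follows from the functoriality of the Waldhausen $S$-construction together with the fact that the relevant squares $(Z\setminus Y \hookrightarrow Z) \to (W\setminus \mathbb{A}_Y^1 \hookrightarrow W)$ coming from the deformation diagram are cartesian. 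The analogous assertion on the Deligne-complex side holds for the same reasons, and the remaining vertical faces commute strictly by composability of pullbacks.

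Granted strict commutativity of all six faces, Theorem~\ref{301}.(1) applied to the outer square running from $Fj^*$ to $F{j_t}_H^*$ yields a unique continuous map fitting into that square. Both ${\gamma_c}_t \circ s_t^*$ and $s_t^* \circ \gamma_c$ are constructed as induced maps of this very form by two successive applications of the same lemma, so the uniqueness clause forces them to coincide.

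The step I expect to be the main obstacle is the careful verification that $s_t^*$ on the Deligne-complex side commutes strictly, not merely up to simplicial homotopy, with the restriction-to-fixed-points map $i^*$ and with the averaging map $\vartheta$. This is ultimately a functorial statement — $s_t$ is equivariant for the trivial $\mu_n$-action on $\mathbb{A}_S^1$, and both $\vartheta$ and the Burgos--Wang cubical transgression $\mathrm{Cub}$ are built from pullback-natural operations on equivariant hermitian cubes — but it must be checked pedantically, since any slippage into commutativity-up-to-homotopy would cost us the uniqueness in Theorem~\ref{301}.(1) and force the explicit tracking of a nontrivial homotopy through the argument.
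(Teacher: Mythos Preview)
Your proposal is correct and is exactly the argument the paper has in mind: the paper states only that the lemma ``is a consequence of Theorem~\ref{301} (1)'' and gives no further details, so what you have written is a faithful expansion of that one-line justification. Your identification of the key point---strict (not merely homotopy) commutativity of $\widetilde{{\rm ch}_g}$ with $s_t^*$, which then allows the uniqueness clause in Theorem~\ref{301}(1) to force $F(\widetilde{{\rm ch}_g},\widetilde{{\rm ch}_g})\circ F(s_t^*,s_t^*)=F(s_t^*,s_t^*)\circ F(\widetilde{{\rm ch}_g},\widetilde{{\rm ch}_g})$---is precisely the content of the lemma.
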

\begin{proof}
This a straightforward consequence of Theorem~\ref{301} (1).
\end{proof}

Now, we are ready to give the the proof of Theorem~\ref{401}.

\begin{proof}(of Theorem~\ref{401})
We use the same trick as the proof of \cite[Theorem 3.1]{Gi}, the Riemann-Roch theorem without denominators. Firstly, for any $t$, we write down a square
$$\xymatrix{\mid \widehat{S}(\mathbb{A}_Y^1)\mid \ar[r]^-{c} \ar[d]^-{s_t^*} & \mid \mathcal{K}(\bigoplus_{p\geq0}\widetilde{D}^{2p-*}(\mathbb{A}_{Y_{\mu_n}}^1,p)[-1]_{R_n})\mid \ar[d]^-{s_t^*}\\
\mid \widehat{S}(Y)\mid \ar[r]^-{c_t} & \mid \mathcal{K}(\bigoplus_{p\geq0}\widetilde{D}^{2p-*}(Y_{\mu_n},p)[-1]_{R_n})\mid}$$
We claim that this square is commutative in the homotopy category $Ho(\mathcal{V})$. Recall that the map $c$ is defined as the composition ${\rm td}_g\circ \widetilde{{\rm ch}_g}$ (see Section 4.1), it is sufficient to show that $\widetilde{{\rm ch}_g}$ and ${\rm td}_g$ both commute with $s_t^*$.  $\widetilde{{\rm ch}_g}$ commutes with $s_t^*$ because the equivariant higher Bott-Chern forms is contravariant,  ${\rm td}_g$ commutes with $s_t^*$ because we endow all submanifolds of $W(\C)$ with the induced K\"{a}hler metrics and hence $\overline{N}_{W/{\mathbb{A}_Y^1}}\mid_{W_t}$ is isometric to $\overline{N}_{W_t/Y}$.
 
Next, denote $h:=(i_*^H)^{-1}\circ \gamma_c\circ i_*$, we claim that, for any $t$, the following diagram  
$$\xymatrix{\mid \widehat{S}(\mathbb{A}_Y^1)\mid \ar[r]^-{h} \ar[d]^-{s_t^*} & \mid \mathcal{K}(\bigoplus_{p\geq0}\widetilde{D}^{2p-*}(\mathbb{A}_{Y_{\mu_n}}^1,p)[-1]_{R_n})\mid \ar[d]^-{s_t^*}\\
\mid \widehat{S}(Y)\mid \ar[r]^-{h_t} & \mid \mathcal{K}(\bigoplus_{p\geq0}\widetilde{D}^{2p-*}(Y_{\mu_n},p)[-1]_{R_n})\mid}$$
is also commutative in $Ho(\mathcal{V})$.

According to Lemma~\ref{405} , $\gamma_c$ commutes with $s_t^*$. So it is sufficient to show that $i_*$ and $i_*^H$ also commute with $s_t^*$. $i_*$ commutes with $s_t^*$ because of the Tor-independence of the deformation diagram. $i_*^H$ commutes with $s_t^*$ because of the same reason and can be seen from the fact that the second quasi-isomorphism in \cite[Section 3 (3.2)]{Gi} (applied to $Y_{\mu_n}$) commutes with $s_t^*$ (see \cite[p. 236]{Gi}) and the fact that the immersions $\mathbb{A}_{Y_{\mu_n}}^1\to W_{\mu_n}$, $X_{\mu_n}\times\{t\}\to W_{\mu_n}$, $Y_{\mu_n}\times\{0\}\to \mathbb{P}(N_{X/Y}\oplus \mathcal{O}_Y)_{\mu_n}$ factor through the deformation to the normal cone construction with respect to $Y_{\mu_n}\to X_{\mu_n}$.

Since both K-theory and Deligne-Beilinson cohomology are homotopy invariant, there are homotopy equivalences
$$\xymatrix{p^*:\quad \mid \widehat{S}(Y)\mid \ar[r]^-{\sim} & \mid \widehat{S}(\mathbb{A}_Y^1)\mid}$$
and
$$\xymatrix{p_{\mu_n}^*:\quad \mid \mathcal{K}(\bigoplus_{p\geq0}D^{2p-*}(Y_{\mu_n},p)[-1]_{R_n})\mid \ar[r]^-{\sim} & \mid \mathcal{K}(\bigoplus_{p\geq0}D^{2p-*}(\mathbb{A}_{Y_{\mu_n}}^1,p)[-1]_{R_n})\mid.}$$
Moreover, notice that $s_t$ is a section of $p$ for any $t$, we conclude that $c_t=h_t$ in $Ho(\mathcal{V})$ if and only if $c=h$ in $Ho(\mathcal{V})$. Then the proof of Theorem~\ref{401} for general case reduces to the proof of Theorem~\ref{401} for the case of zero section embedding, which has been done in last subsection.
\end{proof}

\subsection{Byproduct: Riemann-Roch theorem for higher equivariant algebraic K-theory}
\label{sec:4.4}
Let us go back to the statement of Theorem~\ref{401}. The following square
$$\xymatrix{\mid \widehat{S}(Y)\mid \ar[r]^-{\sim}_-{i_*} \ar[d]_-{c} & Fj^* \ar[d]^-{\gamma_c} \\
\mid \mathcal{K}(\bigoplus_{p\geq0}\widetilde{D}^{2p-*}(Y_{\mu_n},p)[-1]_{R_n})\mid \ar[r]^-{\sim}_-{i_*^H} & Fj_H^*,}$$
which commutes up to homotopy, implies a commutative diagram on the level of homotopy groups
$$\xymatrix{K_*(Y,\mu_n) \ar[d]_-{\cong}^-{i_*} \ar[rr]^-{{\rm Td}_g^{-1}(N_{X/Y})\cdot{\rm ch}_g} && \bigoplus_{p\geq0}H_D^{2p-*}(Y_{\mu_n},\R(p))_{R_n} \ar[d]_-{\cong}^-{i_*^H} \\
K_{Y,*}(X,\mu_n) \ar[rr]^-{{\rm ch}_{Y,g}} && \bigoplus_{p\geq0}H_{Y_{\mu_n},D}^{2p-*}(X_{\mu_n},\R(p))_{R_n}. }$$
It is clear that this commutative diagram can be extended to the following form
$$\xymatrix{K_*(Y,\mu_n) \ar[d]^-{i_*} \ar[rr]^-{{\rm Td}_g^{-1}(N_{X/Y})\cdot{\rm ch}_g} && \bigoplus_{p\geq0}H_D^{2p-*}(Y_{\mu_n},\R(p))_{R_n} \ar[d]^-{i_*^H} \\
K_*(X,\mu_n) \ar[rr]^-{{\rm ch}_g} && \bigoplus_{p\geq0}H_D^{2p-*}(X_{\mu_n},\R(p))_{R_n}, }$$
which is the Riemann-Roch theorem for equivariant regulator maps in case of closed immersions.

If $Y,X$ lie in the category of complex algebraic manifolds and $\mu_n=\mu_1$, Burgos and Wang showed in \cite[Section 5]{BW} that ${\rm ch}_g$ equals the Beilinson's regulator map, then our Theorem~\ref{401} actually gives an analytic proof of \cite[Corollary 3.7.(1)]{Gi} which is a corollary of Gillet's Riemann-Roch theorem without denominators. For general case where $n>1$, Theorem~\ref{401} provides an equivariant extension of Gillet's result.

Now, let $B$ be another proper $\mu_n$-equivariant complex algebraic manifold and suppose that $f: X\to B$ is a $\mu_n$-equivariant smooth holomorphic map. For the non-equivariant case $n=1$, Roessler developed in \cite{Roe} a theory of analytic torsion for cubes of hermitian vector bundles and he used such analytic torsion theory to construct an explicit simplicial homotopy for square
$$\xymatrix{ \widehat{S}(X) \ar[d]^-{f_*} \ar[rr]^-{{\rm Td}(\overline{Tf})\bullet\widetilde{{\rm ch}}} && \mathcal{K}(\bigoplus_{p\geq0}D^{2p-*}(X,p)[-1]) \ar[d]^-{f_*^H} \\
\widehat{S}(B) \ar[rr]^-{\widetilde{{\rm ch}}} && \mathcal{K}(\bigoplus_{p\geq0}D^{2p-*}(B,p)[-1]), }$$
in which $\overline{Tf}$ is the relative tangent bundle with respect to $f: X\to B$ with a suitable smooth metric. Therefore, one obtains an analytic proof of Gillet's Riemann-Roch theorem for compact fibrations. Roessler's method also works in the equivariant case $n>1$, one just uses Bismut-Ma's equivariant analytic torsion form instead of its non-equivariant version due to Bismut-K\"{o}hler. In particular, one may construct an explicit simplicial homotopy for the square
$$\xymatrix{ \widehat{S}(X,\mu_n) \ar[d]^-{f_*} \ar[rr]^-{{\rm Td}_g(\overline{Tf})\bullet\widetilde{{\rm ch}_g}} && \mathcal{K}(\bigoplus_{p\geq0}D^{2p-*}(X_{\mu_n},p)[-1]_{R_n}) \ar[d]^-{f_*^H} \\
\widehat{S}(B,\mu_n) \ar[rr]^-{\widetilde{{\rm ch}_g}} && \mathcal{K}(\bigoplus_{p\geq0}D^{2p-*}(B_{\mu_n},p)[-1]_{R_n}) }$$
and hence a commutative diagram
$$\xymatrix{K_*(X,\mu_n) \ar[d]^-{f_*} \ar[rr]^-{{\rm Td}_g(Tf)\cdot{\rm ch}_g} && \bigoplus_{p\geq0}H_D^{2p-*}(X_{\mu_n},\R(p))_{R_n} \ar[d]^-{f_*^H} \\
K_*(B,\mu_n) \ar[rr]^-{{\rm ch}_g} && \bigoplus_{p\geq0}H_D^{2p-*}(B_{\mu_n},\R(p))_{R_n}. }$$
This can be regarded as a Riemann-Roch theorem for equivariant regulator maps in case of compact fibrations.

As usual, combining with the Riemann-Roch theorem for closed immersions, we get a complete Riemann-Roch theorem for projective morphisms.

\begin{thm}\label{406}
Let $X,B$ be two $\mu_n$-equivariant arithmetic schemes which are proper over an arithmetic ring, and let $f: X\to B$ be a $\mu_n$-projective morphism. Then the equivariant regulator maps fit into a commutative diagram
$$\xymatrix{K_*(X,\mu_n) \ar[d]^-{f_*} \ar[rr]^-{{\rm Td}_g(f)\cdot{\rm ch}_g} && \bigoplus_{p\geq0}H_D^{2p-*}(X_{\mu_n},\R(p))_{R_n} \ar[d]^-{f_*^H} \\
K_*(B,\mu_n) \ar[rr]^-{{\rm ch}_g} && \bigoplus_{p\geq0}H_D^{2p-*}(B_{\mu_n},\R(p))_{R_n}, }$$
where ${\rm Td}_g(f)$ is defined as ${j_{\mu_n}}^*{\rm Td}_g(Tp)\cdot{\rm Td}_g^{-1}(N_j)$ for any $\mu_n$-equivariant factorization
$$\xymatrix{f: X \ar[r]^-{j} & \mathbb{P}_B^r \ar[r]^-{p} & B.}$$
\end{thm}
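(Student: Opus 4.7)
The plan is to deduce Theorem~\ref{406} from the two Riemann--Roch statements already discussed in this subsection, namely the closed-immersion version (which follows from the arithmetic purity theorem, Theorem~\ref{401}) and the smooth-fibration version (which comes from an equivariant adaptation of Roessler's analytic torsion construction). Given a $\mu_n$-equivariant factorization $f=p\circ j$ with $j:X\hookrightarrow \mathbb{P}_B^r$ a regular closed immersion and $p:\mathbb{P}_B^r\to B$ a smooth projective morphism, these two diagrams stack vertically, and chasing a class $x\in K_*(X,\mu_n)$ along the boundary should yield the asserted identity. Because the Todd class appears multiplicatively in both corners, the compatibility boils down to a projection-formula argument in Deligne--Beilinson cohomology.

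First, I would stack the two Riemann--Roch squares. Starting from $x\in K_*(X,\mu_n)$, the closed-immersion case gives
\begin{equation*}
{\rm ch}_g(j_* x)=j_*^H\bigl({\rm Td}_g^{-1}(N_j)\cdot {\rm ch}_g(x)\bigr)
\end{equation*}
in $\bigoplus_{p\geq0}H_D^{2p-*}(\mathbb{P}^r_{B,\mu_n},\R(p))_{R_n}$. Multiplying both sides by ${\rm Td}_g(Tp)$ and applying $p_*^H$, the compact-fibration case gives
\begin{equation*}
{\rm ch}_g(f_* x)={\rm ch}_g(p_* j_* x)=p_*^H\bigl({\rm Td}_g(Tp)\cdot {\rm ch}_g(j_* x)\bigr).
\end{equation*}
The key point is then to rewrite
\begin{equation*}
{\rm Td}_g(Tp)\cdot j_*^H\bigl({\rm Td}_g^{-1}(N_j)\cdot {\rm ch}_g(x)\bigr)
= j_*^H\bigl(j^*{\rm Td}_g(Tp)\cdot {\rm Td}_g^{-1}(N_j)\cdot {\rm ch}_g(x)\bigr)
= j_*^H\bigl({\rm Td}_g(f)\cdot {\rm ch}_g(x)\bigr),
\end{equation*}
using the projection formula for $j_*^H$ and the definition of ${\rm Td}_g(f)$. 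Composing with $p_*^H$ and using $f_*^H=p_*^H\circ j_*^H$ produces exactly the required identity.

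Before declaring victory, I would verify that ${\rm Td}_g(f)$ is independent of the chosen factorization. Given two such factorizations $f=p_1\circ j_1=p_2\circ j_2$, one passes to the fibred product $\mathbb{P}^{r_1}_B\times_B\mathbb{P}^{r_2}_B$ and compares the two via the diagonal: multiplicativity of ${\rm Td}_g$ along the short exact sequence of normal bundles, together with $Tp_1\oplus p_1^* Tp_2|_{\mathbb{P}^{r_1}\times\mathbb{P}^{r_2}}$, gives equality. This is formally the same argument used in the algebraic setting; all it requires is that the various hermitian metrics chosen on the bundles involved contribute only via closed Bott--Chern secondary classes which vanish in cohomology, which is automatic here since we are only tracking cohomology classes (not forms).

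The hard part is not the combinatorial chase just described, but making precise the cited two inputs at the simplicial level, especially the projection formula used in the middle step. In the closed-immersion case, $j_*^H$ was produced as a homotopy equivalence arising from Gillet's purity for the real Deligne--Beilinson complex; the projection formula ${\rm Td}_g(Tp)\cdot j_*^H(-)=j_*^H(j^*{\rm Td}_g(Tp)\cdot(-))$ at the level of cohomology is standard, but one must ensure the multiplication by the closed form ${\rm Td}_g(Tp)$ commutes, up to homotopy, with the constructed map $i_*^H$ built from $i_!$. Because this identity is asserted only on homotopy groups, it suffices to check it on cohomology, where it reduces to the usual projection formula for ${i_{\mu_n}}_!$ in the twisted Poincar\'e duality theory of Deligne--Beilinson cohomology (Theorem~\ref{207}). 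With that in hand, the proof is complete.
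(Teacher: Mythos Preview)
Your proposal is correct and follows essentially the same approach as the paper: combine the closed-immersion Riemann--Roch (from Theorem~\ref{401}) with the compact-fibration Riemann--Roch, use the projection formula to collect the Todd factors into ${\rm Td}_g(f)$, and note that this class is independent of the chosen factorization. The paper's own proof is a two-line remark to this effect, so your write-up is in fact more explicit than what appears there; the only cosmetic point is that the pullback of ${\rm Td}_g(Tp)$ should be written ${j_{\mu_n}}^*{\rm Td}_g(Tp)$ rather than $j^*{\rm Td}_g(Tp)$, since the equivariant Todd form lives on the fixed locus.
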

\begin{proof}
Just notice that the equivriant algebraic K-theory is covariant functorial and the expression ${j_{\mu_n}}^*{\rm Td}_g(Tp)\cdot{\rm Td}_g^{-1}(N_j)$ is independent of the choice of factorization $f=p\circ j$.
\end{proof}

\section{Application: Arithmetic concentration theorem}
\label{sec:5}
In this section, we use localization sequence of higher equivariant K-groups to show an arithmetic concentration theorem. This theorem states that the higher equivariant arithmetic K-groups of an arithmetic scheme can be identified with the higher equivariant arithmetic K-groups of its fixed point subscheme after a suitable localization. To understand this localization, let us first introduce a kind of action on the higher equivariant arithmetic K-groups.

Let $X$ be a $\mu_n$-equivariant arithmetic scheme over an arithmetic ring $(D,\Sigma,F_\infty)$, and let $\overline{E}$ be a $\mu_n$-equivariant vector bundle on $X$ with smooth at infinity metric. Then $(\cdot)\otimes \overline{E}$ defines an exact functor from $\widehat{\mathcal{P}}(X)$ to itself, and hence a simplicial map from $\widehat{S}(X)$ to itself. According to the construction of $\widetilde{{\rm ch}_g}$, we have the following diagram
$$\xymatrix{ \widehat{S}(X) \ar[d]^-{\otimes\overline{E}} \ar[rr]^-{\widetilde{{\rm ch}_g}} && \mathcal{K}(\bigoplus_{p\geq0}\widetilde{D}^{2p-*}(X_{\mu_n},p)[-1]_{R_n}) \ar[d]^-{\bullet{\rm ch}_g^0(\vartheta(\overline{E}))} \\
\widehat{S}(X) \ar[rr]^-{\widetilde{{\rm ch}_g}} && \mathcal{K}(\bigoplus_{p\geq0}\widetilde{D}^{2p-*}(X_{\mu_n},p)[-1]_{R_n}), }$$
which is strictly commutative. So we get a group homomorphism $$\otimes\overline{E}: \widehat{K}_*(X,\mu_n)\to \widehat{K}_*(X,\mu_n)$$ such that $\otimes\overline{F}\circ\otimes\overline{E}=\otimes\big(\overline{E}\otimes\overline{F}\big)$ and $\otimes\overline{F}+\otimes\overline{E}=\otimes\big(\overline{F}\oplus\overline{E}\big)$. We formally define $\otimes(-\overline{E})=-\otimes\overline{E}$.

Denote $R(\mu_n):=K_0(\Z,\mu_n)\cong \Z[\Z/{n\Z}]\cong \Z[T]/{(1-T^n)}$, let $\overline{I}$ be the $\mu_n$-comodule whose term of degree $1$ is $\Z$ endowed with the trivial metric and whose other terms are $0$. We then make $\widehat{K}_*(X,\mu_n)$ an $R(\mu_n)$-module under the assignment $T\to \overline{I}$.

Let $X$ be proper over $D$ and let $Y$ be a $\mu_n$-equivariant closed subscheme in $X$ with complement $U:=X\setminus Y$, then the $R(\mu_n)$-actions on $\widehat{K}_*(X,\mu_n)$ and on $\widehat{K}_*(U,\mu_n)$ induce a $R(\mu_n)$-action on $\widehat{K}_{Y,*}(X,\mu_n)$. It is clear that the localization sequence~(\ref{a1}) is compatible with these $R(\mu_n)$-actions, and we claim that the isomorphism $i_*: \widehat{K}_*(Y,\mu_n)\cong \widehat{K}_{Y,*}(X,\mu_n)$ commutes with the $R(\mu_n)$-actions. In fact, this can be reduced to the case of zero section embedding $i_0: Y\to P$ in which one can readily check the compatibility using the fact that $i_*: \widehat{K}_*(Y,\mu_n)\to \widehat{K}_*(P,\mu_n)$ is injective.

Now, for any hermitian $\mu_n$-equivariant vector bundle $\overline{E}$ on $X$, we write $\otimes\big(\lambda_{-1}(\overline{E})\big)$ for the expression
$$\otimes\big(\overline{\mathcal{O}}_X\big)-\otimes\big(\Lambda^1\overline{E}\big)+\otimes\big(\Lambda^2\overline{E}\big)+\cdots+(-1)^m\otimes\big(\Lambda^m\overline{E}\big)$$
where $m$ is the rank of $E$. We present a refinement of the equivariant self intersection formula for regular closed immersions.

\begin{thm}\label{501}(arithmetic self intersection formula)
Let notations and assumptions be as in Theorem~\ref{401}, then we have
$$i^*i_*(\cdot)=(\cdot)\otimes\big(\lambda_{-1}(\overline{N}_{X/Y}^\vee)\big)$$ as group endomorphisms of $\widehat{K}_*(Y,\mu_n)$.
\end{thm}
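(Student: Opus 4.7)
The plan is to mimic the strategy used in the proof of the arithmetic purity theorem: first deform to the zero section embedding, then unwind the Koszul construction. More precisely, with the deformation to the normal cone $W$ over $\mathbb{A}^1_S$ from Section 4.3, both $i^*i_*$ and the operation $(\cdot)\otimes\lambda_{-1}(\overline{N}^\vee_{X/Y})$ on $\widehat{K}_*(Y,\mu_n)$ extend to natural endomorphisms parametrised by $t\in\mathbb{A}^1_S$ (the normal bundle to $l\colon Y\times \mathbb{A}^1_S\hookrightarrow W$ restricts to $N_{X/Y}$ away from $0$ and to $N_{P/Y}\cong N_{X/Y}$ at $t=0$). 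Since $p^*\colon \mid\widehat{S}(Y)\mid\simeq\mid\widehat{S}(\mathbb{A}^1_Y)\mid$ is a homotopy equivalence with section $s_t^*$ for every $t$, as was used at the end of Section 4.3, the identity holds on $Y$ iff it holds at $t=0$. This reduces the statement to the case of the zero section $i_0\colon Y\hookrightarrow P=\mathbb{P}(N\oplus\mathcal{O}_Y)$, with $\overline{N}$ the chosen $\mu_n$-invariant smooth at infinity metric on $N$.

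In the zero-section case I would then factor $i_{0*}$ through $\mathfrak{i}_*\colon\widehat{\mathcal{P}}(Y)\to\widehat{\mathcal{P}E}(P,\mathrm{iso})$ sending $\overline{F}$ to the Koszul resolution $K(\overline{F},\overline{N})$, where the metric on $\overline{Q}$ is chosen to satisfy Bismut's assumption (A). Composing with $i_0^*$ produces, termwise, the complex of hermitian bundles
\[
i_0^*K(\overline{F},\overline{N})=\bigoplus_{j=0}^{m}(-1)^j\, \overline{F}\otimes \wedge^j (i_0^*\overline{Q}^\vee).
\]
Bismut's assumption (A) asserts exactly that $i_0^*\overline{Q}^\vee\cong\overline{N}^\vee$ \emph{as hermitian $\mu_n$-equivariant vector bundles}; consequently the Euler characteristic of $i_0^*\mathfrak{i}_*(\overline{F})$ in $\widehat{\mathcal{P}}(Y)$ is literally $\overline{F}\otimes\lambda_{-1}(\overline{N}^\vee)$. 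Combined with Theorem~\ref{403}, which identifies $e\circ\mathfrak{i}_*$ with $i_{0*}\circ\varrho$ up to homotopy in the stable category, this proves the identity at the level of underlying algebraic $K$-groups.

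To upgrade the identity to the arithmetic $K$-groups $\widehat{K}_m(Y,\mu_n)=\pi_{m+1}\big(\text{homotopy fibre of }\widetilde{\mathrm{ch}_g}\big)$, one must exhibit a homotopy, compatible with $\widetilde{\mathrm{ch}_g}$, between the two endomorphisms of $\mid\widehat{S}(Y)\mid$. By Definition~\ref{302} and Theorem~\ref{301}.(1) this amounts to constructing a homotopy square whose bottom row realises the analogous identity on the Deligne side. Here the calculation at the very end of Section 4.2 is decisive: the model push-forward $(i_{\mu_n !})'$ satisfies
\[
i_{\mu_n}^*(i_{\mu_n !})'\big(\mathrm{Td}_g^{-1}(\overline{N})\bullet\mathrm{ch}_g(\overline{F})\big)
=\mathrm{ch}_g(\overline{F})\bullet\mathrm{ch}_g\Bigl(\sum_{j=0}^{\mathrm{rk}\,N}(-1)^j\wedge^j\overline{N}^\vee\Bigr),
\]
so that $i_{\mu_n}^*\circ i_*^H\circ(\mathrm{Td}_g^{-1}(\overline{N})\bullet-)$ coincides on the nose with multiplication by $\mathrm{ch}_g(\lambda_{-1}(\overline{N}^\vee))$. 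Combining this with the homotopy of Proposition~\ref{404} (for $i_P^*\circ\mathfrak{i}_*$ versus $(i_{\mu_n !})'\circ (\mathrm{Td}_g^{-1}(\overline{N})\bullet i_Y^*\mathrm{ch}_g)$), and with the homotopy between $i_*^H$ and $(i_*^H)'$ supplied by the Euler-Green current $\widetilde{e}(P_0,\overline{Q}_g\!\mid_{P_0},i_{\mu_n})$, yields the required homotopy on the Deligne side.

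The main obstacle, as in Section 4.3, is not the heuristic computation but the bookkeeping: one has to assemble the Koszul/Bismut homotopy $\mathbf{H}_k$ of Proposition~\ref{404}, the Euler-Green homotopy of square (\ref{a3}) and the tensor-product functoriality of $\widetilde{\mathrm{ch}_g}$ into a single homotopy of maps of pairs $(\mid\widehat{S}(Y)\mid,\widetilde{\mathrm{ch}_g})\to(\mid\widehat{S}(Y)\mid,\widetilde{\mathrm{ch}_g})$, so that the induced map on $\pi_{m+1}$ of the homotopy fibres is well defined and independent of the auxiliary choices. Once this diagram commutes in $\mathrm{Ho}(\mathcal{V})$ one passes to homotopy groups and obtains the claimed equality of endomorphisms of $\widehat{K}_*(Y,\mu_n)$; the $R(\mu_n)$-module structure compatibility established just before the statement of the theorem ensures that the right-hand side is indeed $(\cdot)\otimes\lambda_{-1}(\overline{N}^\vee_{X/Y})$.
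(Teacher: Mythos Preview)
Your strategy is the same as the paper's: reduce via deformation to the normal cone to the zero section $i_0\colon Y\hookrightarrow P$, use Bismut's assumption (A) so that $i_0^*\overline{Q}\cong\overline{N}$ isometrically, read off $i_0^*\mathfrak{i}_*(\overline{F})=\overline{F}\otimes\lambda_{-1}(\overline{N}^\vee)$ on the $K$-theory side, and then match the Deligne side using the computation $i_{\mu_n}^*(i_{\mu_n!})'\big(\mathrm{Td}_g^{-1}(\overline{N})\big)=\mathrm{ch}_g\big(\lambda_{-1}(\overline{N}^\vee)\big)$ together with the Euler--Green homotopy.

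Where you stop, however, is exactly the point the paper resolves. You say the ``main obstacle'' is assembling $\mathbf{H}_k$ from Proposition~\ref{404} and the Euler--Green homotopy of (\ref{a3}) into a single compatible homotopy, but you do not explain why these two homotopies are consistent after pulling back along $i_{\mu_n}$. The paper makes this concrete: one must show that $i_H^*\circ\beta\circ T_g\big(K(\overline{\mathcal{O}_Y},\overline{N})\big)$ coincides with $i_H^*\circ\beta\circ\big(\mathrm{Td}_g^{-1}(\overline{N})\bullet\widetilde{e}(P_0,\overline{Q}_g|_{P_0},i_{\mu_n})\big)$. This is done by invoking the very definition $\widetilde{e}=\mathrm{Td}_g(\overline{Q})\bullet T_g\big(K(\overline{\mathcal{O}_Y},\overline{N})\big)$, which reduces the question to comparing multiplication by $\pi_P^*\mathrm{Td}_g^{-1}(\overline{N})$ with multiplication by $\mathrm{Td}_g^{-1}(\overline{Q})$ (modulo the same commutation homotopy with $\beta$), and these agree after $i_H^*$ because $i^*\mathrm{Td}_g^{-1}(\overline{Q})=\mathrm{Td}_g^{-1}(\overline{N})=i^*\pi_P^*\mathrm{Td}_g^{-1}(\overline{N})$. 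Without this step your argument identifies the two endomorphisms of $\mid\widehat{S}(Y)\mid$ and the two endomorphisms on the Deligne side separately, but does not certify that the \emph{chosen} homotopies defining $i_*$ and $i_*^H$ glue to the trivial homotopy defining $\otimes\lambda_{-1}(\overline{N}^\vee)$; that compatibility is precisely what pins down the map on homotopy fibres.
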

\begin{proof}
Since the K-theory and the Deligne-Beilinson cohomology are $\mathbb{A}^1$-homotopy invariant, we may again use the deformation to the normal cone construction to reduce our proof to the case of zero section embedding $i_0: Y\hookrightarrow P=\mathbb{P}(N\oplus\mathcal{O}_Y)$. Actually, we have $i^*i_*=i_0^*{i_0}_*$. Then we write $i_0=i$ for short. By taking the $\mu_n$-averages, we may suppose that all metrics are $\mu_n$-invariant. Regard the K-theory spaces as $\Omega-$spectra, then $\otimes(\lambda_{-1}\overline{N}^\vee)$ defines an infinite loop map $\mid\widehat{S}(Y,\mu_n)\mid\to \mid\widehat{S}(Y,\mu_n)\mid$. Concerning the construction of the embedding morphism $i_*$ and the fact that $i^*\overline{Q}$ is isometric to $\overline{N}$, the map $i^*i_*$ coincides with $\otimes(\lambda_{-1}\overline{N}^\vee)$.

Next, notice that
$$i_{\mu_n}^*({i_{\mu_n}}_!)'\big({\rm Td}_g^{-1}(\overline{N})\big)={\rm ch}_g(\sum_{j=0}^{{\rm rk}N}(-1)^j\wedge^j\overline{N}^\vee)$$
and that $i_H^*\circ (i^H_*)'$ is homotopic to $i_H^*\circ i^H_*$ with an explicit homotopy
$$i_H^*\circ \beta\circ \widetilde{e}(P_0,\overline{Q}_g\mid_{P_0},i_{\mu_n}),$$
where $\beta$ is a chosen simplicial homotopy inverse
$$\mathcal{K}(\bigoplus_{p\geq0}{'D}^{2p-*}(P_{\mu_n},p)[-1]_{R_n})\to \mathcal{K}(\bigoplus_{p\geq0}{D}^{2p-*}(P_{\mu_n},p)[-1]_{R_n}).$$
So we are left to show that $i_H^*\circ \beta\circ {\rm Td}_g^{-1}(\overline{N})\bullet\widetilde{e}(P_0,\overline{Q}_g\mid_{P_0},i_{\mu_n})$ coincides with the homotopy $i_H^*\circ \beta\circ T_g\big(K(\overline{\mathcal{O}_Y},\overline{N})\big)$. By projection formula and the definition $\widetilde{e}(P_0,\overline{Q}_g\mid_{P_0},i_{\mu_n})={\rm Td}_g(\overline{Q})\bullet T_g\big(K(\overline{\mathcal{O}_Y},\overline{N})\big)$, this is equivalent to show that
$$i_H^*\circ \beta\circ \pi_P^*{\rm Td}_g^{-1}(\overline{N})\circ \widetilde{e}(P_0,\overline{Q}_g\mid_{P_0},i_{\mu_n})=i_H^*\circ \beta\circ {\rm Td}_g^{-1}(\overline{Q})\circ \widetilde{e}(P_0,\overline{Q}_g\mid_{P_0},i_{\mu_n}).$$
Further, the homotopy connecting $\beta\circ \pi_P^*{\rm Td}_g^{-1}(\overline{N})$ and $\pi_P^*{\rm Td}_g^{-1}(\overline{N})\circ \beta$ is the same as the homotopy connecting $\beta\circ {\rm Td}_g^{-1}(\overline{Q})$ and ${\rm Td}_g^{-1}(\overline{Q})\circ \beta$. Then our statement follows from the fact that $i^*{\rm Td}_g^{-1}(\overline{Q})={\rm Td}_g^{-1}(\overline{N})=i^*\pi_P^*{\rm Td}_g^{-1}(\overline{N})$. So we are done.
\end{proof}

Specifying the natural inclusion $i: X_{\mu_n}\hookrightarrow X$, this arithmetic self intersection formula plays a crucial role in looking for the inverse of $i_*: \widehat{K}_*(X_{\mu_n},\mu_n)\to \widehat{K}_*(X,\mu_n)$ (after a suitable localization). We now clarify this localization. Let $\rho$ be a prime ideal in $R(\mu_n)$ such that the elements $1-T^k$ for $k=1,\cdots,n-1$ are not contained in $\rho$. For instance, one may choose $\rho$ to be the kernel of the canonical morphism $\Z[T]/{(1-T^n)}\to \Z[T]/{(\Phi_n)}$ where $\Phi_n$ stands for the $n$-th cyclotomic polynomial.
Then the arithmetic concentration theorem can be formulated as follows.

\begin{thm}\label{502}
Let $X$ be a $\mu_n$-equivariant arithmetic scheme which is proper over an arithmetic ring, and let $i: X_{\mu_n}\hookrightarrow X$ be the closed immersion from the fixed point subscheme $X_{\mu_n}$ to $X$. Then the group homomorphism $$i_*: \widehat{K}_m(X_{\mu_n},\mu_n)\to \widehat{K}_m(X,\mu_n)$$ induces an isomorphism $i_*: \widehat{K}_m(X_{\mu_n},\mu_n)_{\rho}\cong \widehat{K}_m(X,\mu_n)_{\rho}$ for any $m\geq 1$. Denote by $\overline{N}$ the normal bundle of $X_{\mu_n}$ in $X$ endowed with the metric induced from a K\"{a}hler metric of $X(\C)$. Then the map $$\otimes \lambda_{-1}(\overline{N}^\vee): \widehat{K}_m(X_{\mu_n},\mu_n)_{\rho}\to \widehat{K}_m(X_{\mu_n},\mu_n)_{\rho}$$
admits a formal inverse $\otimes \lambda_{-1}^{-1}(\overline{N}^\vee)$ and $\otimes \lambda_{-1}^{-1}(\overline{N}^\vee)\circ i^*$ is the inverse of
$$i_*: \widehat{K}_m(X_{\mu_n},\mu_n)_{\rho}\cong \widehat{K}_m(X,\mu_n)_{\rho}.$$
\end{thm}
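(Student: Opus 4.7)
The plan is to combine Theorem~\ref{thc} (the arithmetic localization sequence) with Thomason's classical equivariant concentration theorem and the arithmetic self intersection formula of Theorem~\ref{501}. I would first apply Theorem~\ref{thc} to the regular equivariant closed immersion $i: X_{\mu_n} \hookrightarrow X$ (the regularity of $X_{\mu_n}$ being a standard consequence of that of $X$ together with the $\mu_n$-action) with open complement $j: U \hookrightarrow X$. After applying the exact localisation functor $(-)_\rho$ on the category of $R(\mu_n)$-modules, one obtains an exact sequence
$$\cdots \to \widehat{K}_m(X_{\mu_n},\mu_n)_\rho \xrightarrow{i_*} \widehat{K}_m(X,\mu_n)_\rho \xrightarrow{j^*} \widehat{K}_m(U,\mu_n)_\rho \to \widehat{K}_{m-1}(X_{\mu_n},\mu_n)_\rho \to \cdots$$
ending in degree $1$. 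Thus the first assertion reduces to showing $\widehat{K}_m(U,\mu_n)_\rho = 0$ for every $m \geq 1$.

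For this vanishing, observe that by the very choice $U = X \setminus X_{\mu_n}$ one has $U_{\mu_n} = \emptyset$, so every real Deligne-Beilinson cohomology group $H_{\mathcal{D}}^{2p-m}(U_{\mu_n},\R(p))_{R_n}$ appearing in the fibre sequence defining $\widehat{K}_*(U,\mu_n)$ vanishes identically. The long exact sequence displayed at the end of Section~2.3 then collapses to natural isomorphisms $\widehat{K}_m(U,\mu_n) \cong K_m(U,\mu_n)$ for all $m \geq 1$, and a direct inspection of the tensor-product action recalled at the opening of this section shows this isomorphism is $R(\mu_n)$-linear. I would then invoke Thomason's classical concentration theorem from \cite{Th2}, which forces $K_m(U,\mu_n)_\rho = 0$ for all $m$ whenever the $\mu_n$-action on $U$ has empty fixed locus, to conclude $\widehat{K}_m(U,\mu_n)_\rho = 0$ and hence, via the localization sequence, that $i_*$ is the desired isomorphism.

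Having established the isomorphism, I would derive the explicit inverse from the arithmetic self intersection formula: Theorem~\ref{501} gives $i^* \circ i_* = \otimes \lambda_{-1}(\overline{N}^\vee)$ as endomorphisms of $\widehat{K}_m(X_{\mu_n},\mu_n)_\rho$. Over the fixed locus, the normal bundle splits equivariantly as $\overline{N} = \bigoplus_{\zeta \neq 1} \overline{N}_\zeta$, so the class $\lambda_{-1}(\overline{N}^\vee) = \prod_{\zeta \neq 1} \lambda_{-1}(\overline{N}_\zeta^\vee)$ reduces via the natural augmentation into $R(\mu_n) \otimes K_0(X_{\mu_n})$ to a product of factors of the form $1 - \zeta^{-1} T$; by the standing hypothesis on $\rho$, these factors are units in $R(\mu_n)_\rho$, and an eigenspace argument upgrades this to invertibility of the endomorphism $\otimes \lambda_{-1}(\overline{N}^\vee)$ of $\widehat{K}_m(X_{\mu_n},\mu_n)_\rho$. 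Denoting its inverse by $\otimes \lambda_{-1}^{-1}(\overline{N}^\vee)$, the composition $\otimes \lambda_{-1}^{-1}(\overline{N}^\vee) \circ i^*$ is a left inverse of $i_*$ and therefore, by the first part, a two-sided inverse.

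The principal obstacle will be the second step: one must verify carefully that the map $\widehat{K}_m(U,\mu_n) \to K_m(U,\mu_n)$ intertwines the $R(\mu_n)$-module structures on both sides, so that Thomason's classical vanishing indeed transports to $\widehat{K}_m(U,\mu_n)_\rho$. A closely related technical point is the passage from the invertibility of the scalar element $\lambda_{-1}(\overline{N}^\vee)$ in $R(\mu_n)_\rho$ to the invertibility of the endomorphism $\otimes \lambda_{-1}(\overline{N}^\vee)$ on $\widehat{K}_m(X_{\mu_n},\mu_n)_\rho$: this rests on the eigenbundle decomposition over the fixed locus together with the compatibility of the metric-averaging procedure of Section~2 with that decomposition.
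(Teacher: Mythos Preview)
Your proposal is correct and follows essentially the same route as the paper: localize the arithmetic localization sequence of Theorem~\ref{thc}, use $U_{\mu_n}=\emptyset$ to identify $\widehat{K}_m(U,\mu_n)$ with $K_m(U,\mu_n)$, invoke Thomason's concentration theorem \cite{Th2} to kill the localized term, and then read off the inverse from Theorem~\ref{501}. The only cosmetic difference is that for the invertibility of $\otimes\lambda_{-1}(\overline{N}^\vee)$ the paper simply cites \cite[Lemma~4.5]{KR}, which already produces a formal inverse as a finite polynomial of hermitian bundles with coefficients in $R(\mu_n)_\rho$, whereas you sketch the underlying eigenbundle/unit argument yourself; these amount to the same thing.
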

\begin{proof}
Denote by $j: U\hookrightarrow X$ the complement of $X_{\mu_n}$ in $X$. Then we have a long exact sequence
$$\xymatrix{\cdots\ar[r] & \widehat{K}_{m}(X_{\mu_n},\mu_n)_{\rho} \ar[r]^-{i_*} & \widehat{K}_m(X,\mu_n)_{\rho} \ar[r]^-{j^*} & \widehat{K}_m(U,\mu_n)_{\rho} \ar[r] & \widehat{K}_{m-1}(X_{\mu_n},\mu_n)_{\rho} \ar[r] & \cdots}$$
ending with
$$\xymatrix{\cdots\ar[r] & \widehat{K}_{1}(X_{\mu_n},\mu_n)_{\rho} \ar[r]^-{i_*} & \widehat{K}_1(X,\mu_n)_{\rho} \ar[r]^-{j^*} & \widehat{K}_1(U,\mu_n)_{\rho}.}$$

Since $U_{\mu_n}=\emptyset$, according to the definition of $\widehat{K}_*$, we know that $\widehat{K}_*(U,\mu_n)\cong K_*(U,\mu_n)$ and hence $\widehat{K}_*(U,\mu_n)_{\rho}\cong K_*(U,\mu_n)_{\rho}$. By Thomason's algebraic concentration theorem \cite[Th\'{e}or\`{e}me 2.1]{Th2}, $K_*(U,\mu_n)_{\rho}=0$, so $\widehat{K}_*(U,\mu_n)_{\rho}=0$. This implies that $$i_*: \widehat{K}_*(X_{\mu_n},\mu_n)_\rho\to \widehat{K}_*(X,\mu_n)_\rho$$ is an isomorphism.

In \cite[Lemma 4.5]{KR}, K\"{o}hler and Roessler constructed a formal inverse of $\lambda_{-1}(\overline{N}^\vee)$ in $\widehat{K}_0(X_{\mu_n},\mu_n)_{\rho}$. This formal inverse represents a finite polynomial of hermitian vector bundles with coefficients in $R(\mu_n)_{\rho}$, and hence it provides a formal inverse of the map $\otimes \lambda_{-1}(\overline{N}^\vee)$.

At last, the statement that $\otimes \lambda_{-1}^{-1}(\overline{N}^\vee)\circ i^*$ is the inverse of
$i_*: \widehat{K}_m(X_{\mu_n},\mu_n)_{\rho}\cong \widehat{K}_m(X,\mu_n)_{\rho}$ follows from Theorem~\ref{501} and the fact that $i_*: \widehat{K}_m(X_{\mu_n},\mu_n)_{\rho}\cong \widehat{K}_m(X,\mu_n)_{\rho}$ is already an isomorphism.
\end{proof}

\textbf{Acknowledgements:}
The author would like to thank Damian Roessler for his careful reading of an early version of this paper and for his valuable comments. 



\hspace{5cm} \hrulefill\hspace{5.5cm}

Beijing Center for Mathematics and Information Interdisciplinary Sciences

School of Mathematical Sciences, Capital Normal University

West 3rd Ring North Road 105, 100048 Beijing, P. R. China

E-mail: shun.tang@outlook.com

\end{document}